\documentclass[11pt, a4paper]{article}

\usepackage[colorlinks,citecolor=blue,urlcolor=blue]{hyperref}
\usepackage{amsmath}
\usepackage{amsthm}
\usepackage{amssymb}
\usepackage{amsfonts, mathrsfs}
\usepackage{cases}
\usepackage{graphicx}
\usepackage{xcolor}
\usepackage[margin=1in]{geometry}
\usepackage[utf8]{inputenc}
\usepackage{graphicx} 
\usepackage{geometry}
\usepackage{lipsum}
\usepackage{fancyhdr}
\usepackage{mathtools}
\usepackage{amsfonts}

\usepackage{verbatim}
\usepackage{caption,subcaption}
\usepackage{bm}
\usepackage{algorithm}
\usepackage{algpseudocode}
\usepackage[normalem]{ulem}

\numberwithin{equation}{section}

\newtheorem{Definition}{Definition}[section]
\newtheorem{Remark}{Remark}[section]
\newtheorem{Theorem}{Theorem}[section]
\newtheorem{Lemma}{Lemma}[section]
\newtheorem{Proposition}{Proposition}[section]
\newtheorem{Corollary}{Corollary}[section]
\newtheorem{Assumption}{Assumption}[section]

\newcommand{\be}{\begin{equation}}
  \newcommand{\ee}{\end{equation}}
\newcommand{\bee}{\begin{equation*}}
  \newcommand{\eee}{\end{equation*}}
\newcommand{\bi}{\begin{itemize}}
  \newcommand{\ei}{\end{itemize}}

\def \eps{\varepsilon}
\def \E{\mathbb{E}}

\def \N{\mathbb{N}}
\def \P{\mathbb{P}}

\def \R{\mathbb{R}}

\def \Pc{{\mathcal P}}

\def \Bc{{\mathcal B}}
\def \Sc{{\mathcal S}}
\def \Hc{{\mathcal H}}
\def \Kc{{\mathcal K}}
\def \Xc{{\mathcal X}}
\def \Dc{{\mathcal D}}
\def \wt{\operatorname{{\bf W}}}

\def \argmax{\operatorname{argmax}}
\def \Leb{\operatorname{Leb}}
\def \supp{\operatorname{\texttt{supp}}}
\title{Time-Inconsistent MDPs with Entropy Regularization: Equilibrium Existence and Policy Iteration}
\author{
Fengyuan Cao\thanks{Zhongtai Securities Institute for Financial Studies, Shandong University, Jinan, Shandong, China. Email: \url{202612022@mail.sdu.edu.cn}}
\and Zhenhua Wang\thanks{Zhongtai Securities Institute for Financial Studies, Shandong University, and Shandong Province Key Laboratory of Financial Risk, Jinan, Shandong, China. Email:  \url{zhenhuaw@sdu.edu.cn}}
}
\date{}

\begin{document}

\maketitle

\begin{abstract}
We study infinite-horizon time-inconsistent Markov decision processes with a countably infinite state space and unbounded reward functions. The reward is allowed to depend explicitly on the initial time and initial	state, thereby accommodating general sources of time inconsistency. We seek relaxed feedback equilibria, and our approach is based on entropy regularization and weighted functional analytic methods. With entropy regularization, we characterize a regular relaxed equilibrium through a fixed-point operator. By introducing two weight functions with distinct roles---one controlling the growth of rewards and values and the other defining the ambient weighted space---we construct a compact invariant set under a product topology and apply the Schauder--Tychonoff fixed-point theorem to establish existence of regularized equilibria.	Importantly, the invariant set can be chosen uniformly for small entropy weight $\lambda\in(0,1]$. We then let $\lambda\to0+$ and show, through compactness, concentration of Gibbs policies, and uniform-integrability arguments, that a subsequential limit is a relaxed equilibrium of the	original unregularized problem.
	
We further study a policy iteration algorithm (PIA) for the entropy-regularized equilibrium problem. Under a weighted-discounting structure and sufficiently strong discounting, we establish exponential convergence and uniqueness of the regularized equilibrium in a suitable weighted Banach space. Combining the policy-iteration error with a quantitative soft-max approximation bound, we show that the iterated policies constitute weighted $\varepsilon$-equilibria for the original unregularized problem and derive an explicit regret estimate. A numerical example illustrating the convergence of PIA under strong discounting and a counterexample demonstrating its failure under weak discounting are also provided.\\

\noindent\textbf{Keywords}: Time-inconsistent Markov decision processes, relaxed equilibrium, entropy regularization, policy iteration, regret analysis
\end{abstract}

\section{Introduction}\label{sec:introduction}

Time inconsistency arises when a policy that is preferred by a decision maker at the current time ceases to be preferred when the same decision problem is reconsidered at a later time or state. Classical examples include non-exponential discounting, mean--variance objectives, state-dependent risk preferences, and objective functionals depending explicitly on the initial time or initial state. In such problems the Bellman principle generally fails, and a globally optimal dynamically consistent policy need not exist. Following the idea of Strotz \cite{Strotz55}, a now standard approach is to regard the decision problem as an intra-personal game among the agent's successive selves and to seek a subgame-perfect equilibrium policy, namely a policy under which no current self benefits from a local deviation when future selves are assumed to follow the prescribed strategy. This game-theoretic approach has generated an extensive literature on time-inconsistent stochastic control and optimal stopping; see, among others, \cite{bayraktar2023equilibria, BKM17, bjork2021time, bjork2014mean, EL06, HZ21, yong2024optimal} and the references therein. 

The corresponding theory for discrete-time Markov decision processes (MDPs) has developed along several directions. A substantial part of the literature studies equilibria in pure Markov strategies. For quasi-hyperbolic discounting, \cite{balbus2020markov,balbus2018uniqueness,JN21} establish Markov-perfect or time-consistent equilibrium results under various state-space and structural assumptions; in the countable-state setting, \cite{mei2021time} considers a time-inconsistent risk-sensitive criterion. More general discrete-time formulations and existence questions for Markov equilibrium controls are studied in \cite{balbus2022time,bayraktar2023existence}. These papers cover finite, countable, and more general state spaces and criteria ranging from quasi-hyperbolic discounting to recursive or risk-sensitive preferences. Most of these results focus on pure Markov equilibria; \cite{JN21} also studies randomized stationary equilibria on general Borel state spaces and obtains deterministic equilibria under additional conditions, including results for countable-state models. The restriction to pure strategies is nontrivial: as demonstrated in \cite{bayraktar2025relaxed}, a pure equilibrium may fail to exist even for a finite-state time-inconsistent MDP, which motivates the use of relaxed policies in more general models.

Relaxed strategies have therefore become increasingly important in the existence theory. \cite{bayraktar2025relaxed} study infinite-horizon time-inconsistent MDPs with general non-exponential discounting on a finite state space. They introduce entropy regularization, establish an equilibrium for the regularized problem, and then let the entropy weight vanish to obtain a relaxed equilibrium for the original problem. The present paper builds on this idea in a broader setting with a countably infinite state space, possibly unbounded rewards, time-inhomogeneous transition probabilities, and explicit initial-time/state dependence. These extensions require weighted compactness and uniform-integrability arguments to handle the infinite-dimensional state space and unbounded value functions.

Related vanishing-entropy methods have also been developed for continuous-time time-inconsistent control \cite{wang2026equilibrium} and for time-inconsistent mean field games \cite{BayraktarWangYuZhang2026}. These developments suggest that entropy regularization provides a common framework for equilibrium existence, approximation, and computation under time inconsistency.

In the context of reinforcement learning, \cite{AB10, Fedus20, SRK22} develop algorithms for MDPs with non-exponential or hyperbolic discounting. These works primarily aim at computing optimal, or precommitment, policies. Such a policy is chosen from the perspective of the initial self and, under time inconsistency, need not remain optimal when the decision problem is reconsidered later. Our focus is instead on equilibrium policies for a sophisticated agent, for which sequential incentives are built directly into the solution concept.

It is also worth mention two recent works. \cite{jaskiewicz2026stochastic} establish randomized Markov-perfect equilibria for stochastic games of risk-sensitive players with quasi-hyperbolic discounting on a countable state space, allowing time-inhomogeneous primitives but assuming uniformly bounded one-stage payoffs. \cite{zhou2026existence} develops an existence theory for Markov relaxed equilibria in discrete-time time-inconsistent stochastic games, including multi-player, Stackelberg, and mean-field settings, with particular emphasis on uncountable state spaces and weak-star compactness. In contrast to these works, our framework allows unbounded rewards, while entropy regularization plays a central role not only in equilibrium existence but also in numerical computation, including convergence of the associated policy iteration algorithm and quantitative weighted $\eps$-equilibrium guarantees for the original unregularized problem.

The present paper develops an entropy-regularization framework for equilibrium existence and computation in infinite-horizon time-inconsistent MDPs. We work with a countably infinite state space, time-inhomogeneous transition probabilities, and possibly unbounded rewards that may depend on both the evaluating time-state pair and the future time-state pair. This formulation covers general sources of time inconsistency beyond non-exponential discounting.

To handle the resulting unboundedness, we employ weighted functional-analytic techniques from the theory of Markov control processes; see, e.g., \cite{hernandez2012further}. A distinctive feature of our construction is the use of two weights $w$ and $W$: the former controls the growth of rewards and value functions, while the latter defines the ambient weighted Banach space $\Bc_{\wt}$. The continuation values are then placed in the product space $\Xc$. The strict separation between the two weights turns uniform $w$-growth bounds into uniformly vanishing $W$-tails, yielding compactness of the relevant invariant set in the product topology (see Lemma~\ref{lm:KM.compact}). This allows us to accommodate unbounded rewards and infinitely many calendar-time coordinates without imposing artificial decay as $t\to\infty$.

Our first main result establishes the existence of an entropy-regularized equilibrium; see Theorem~\ref{thm:existence.regular}. The main difficulties arise from the infinite state space, unbounded rewards, and time-inhomogeneous dynamics. Weighted entropy and moment estimates (Lemmas \ref{lm:entropy.est} and \ref{lm:est.0}) allow us to construct a compact convex invariant set for the regularized value operator $\Psi_\lambda$ (see Lemma \ref{lm:self-mapping}). Weighted tail estimates, together with pointwise convergence of the Gibbs policies and finite-step transition laws, yield continuity of $\Psi_\lambda$ in Lemma \ref{lm:Psi.continuous}. The Schauder--Tychonoff fixed-point theorem then gives a regular relaxed equilibrium. Importantly, the invariant growth bound is uniform over small entropy weights, providing the compactness needed for the subsequent vanishing-entropy analysis.

Our second main result establishes the existence of a relaxed equilibrium for the original problem through a vanishing-entropy argument (see Theorem~\ref{thm:relax.origin}). Starting from regularized equilibria along a vanishing sequence of entropy weights, the uniform estimates from Section~\ref{sec:entropy} yield subsequential compactness of both the auxiliary values and policies. Then Lemma~\ref{lm:zero.entropylimit} uses the concentration property of the Gibbs policies to identify the support of the limiting policy, while Lemma~\ref{lm:zero.entropylimitV} establishes convergence of the unregularized values through weighted uniform-integrability estimates. Together with Proposition~\ref{prop:fixedpoint.origin}, these results identify the limiting value with that generated by the limiting policy and yield a relaxed equilibrium of the original problem.

Entropy regularization has also become an important analytical and algorithmic tool in reinforcement learning and stochastic control. In the time-consistent diffusion setting, convergence of entropy-regularized policy iteration has recently been studied by \cite{HuangWangZhou2025convergence} using analytic estimates and by \cite{ma2026convergence} using probabilistic representations. Under time inconsistency, the usual policy-improvement principle is no longer available. Recently, \cite{huang2026policy} addressed this difficulty for finite-horizon continuous-time diffusion control and established exponential convergence of an entropy-regularized policy iteration algorithm.

Our third contribution develops an infinite-horizon, countable-state counterpart of this algorithmic program. We formulate a policy iteration algorithm (PIA) under the weighted-discounting structure and derive the key stability and uniform estimates in Lemmas~\ref{lm:PIA.est1} and~\ref{lm:PIA.rho.uniform}. These estimates yield a contraction in the weighted Banach space $\Xc$. Consequently, Theorem~\ref{thm:converge.PI} establishes exponential convergence of PIA under sufficiently strong discounting, while Corollary~\ref{cor:PIA.unique} gives uniqueness of the regularized equilibrium in $\Xc$. In contrast, the explicit two-state example in Section~\ref{subsec:PIA.nonconvergent.interval} shows that, under weak discounting, the regularized equilibrium may be unique while PIA is instead attracted to a nontrivial period-two orbit.

More importantly, Corollary \ref{cor:PIA.epsilon} provides a quantitative connection between the PIA iterates and equilibria of the original zero-entropy problem. Combining a uniform soft-max approximation estimate with the exponential convergence in Theorem \ref{thm:converge.PI}, we prove in Corollary \ref{cor:PIA.epsilon} that the $n$th PIA policy is a weighted $\eps_n$-equilibrium, with
$$
\eps_n \leq C\left[ \lambda(1+|\ln\lambda|) + \left( \frac{\rho_0 C_1\lambda^{-1}} {1-\rho_0 C_0} \right)^{n-1} \|v^0-v^*\|_{\Xc} \right].
$$
The above regret bound is determined jointly by regularization bias and the PIA error, and the two effects therefore have to be balanced when using PIA to approximate an equilibrium of the original problem. To the best of our knowledge, this is the first quantitative regret analysis for PIA in time-inconsistent control and the first result that converts convergence of a computational algorithm into an explicit $\eps$-equilibrium guarantee for the underlying time-inconsistent problem.

The rest of the paper is organized as follows. Section \ref{sec:model} introduces the model and equilibrium concepts, develops the weighted functional framework, and states the main assumptions.
Section \ref{sec:entropy} studies the entropy-regularized problem, derives the Gibbs fixed-point formulation, and establishes existence of regular relaxed equilibria. Section \ref{sec:origin} studies the vanishing-entropy limit and proves existence of relaxed equilibria for the original problem. Section \ref{sec:pia} analyzes policy iteration under weighted discounting, establishes exponential convergence and uniqueness under strong discounting, derives the quantitative equilibrium-regret estimate, and presents a numerical convergence example under strong discounting and a nonconvergence example under weak discounting.

\section{Models and preliminaries}\label{sec:model}
Set $\N_0 \coloneqq \N \cup \{0\}$. Let $A \subset \R^l$ be a compact action space with $\Leb(A)>0$. Consider a probability space $(\Omega, \mathcal{F}, {P})$ that supports a  discrete-time Markov process $(X_t)_{t\in \N_0}$ that takes values in $\Sc=\{1,2,3,\dots\}$. The dynamics of $X$ is governed by the following transition function
$$
p(t,i,j,a):=\P(X_{t+1}=j | X_t=i , \alpha_t=a)\quad  \forall t\in \N_0, \; i,j \in \Sc, \; a\in A.
$$
Given $t\in \N_0$ and $a\in A$, we will denote by $p(t,i,a)$ the vector $(p(t,i,j,a))_{j\in \Sc}$ and $p(t,a)$ the matrix $(p(t,i,j,a))_{i,j\in \Sc}
$.

Let $\Pc(A)$ (resp. $\Dc(A))$  denote the set of all probability measures (resp. density functions) on the action space $A$. We call $\pi  :\N_0\times \Sc\rightarrow \Pc(A) $ a {\it relaxed  feedback control}, and we denote by $\Pi$  the set of all relaxed feedback controls. For arbitrary $t\in \N_0$ and $i\in \Sc$, we also denote by $p^\varpi(t,i)$ the vector of probabilities $(\int_A p(t,i,j,a)\varpi(da))_{j\in \Sc}$ under a given $\varpi\in \Pc(A)$, and denote by $p^\pi(t) = (\int_A p(t,i,j)\pi(t,i,da))_{i,j\in \Sc}$ the transition matrix  for a given $\pi\in \Pi$.

Given $\pi \in \Pi$, the dynamics of $X = X^\pi$ is determined as follows. At any time $t \in \mathbb{N}_0$, given that  
$X_t = i \in \mathcal{S}$, we sample $a \in A$ according to the probability measure $\pi(t,i) \in \mathcal{P}(A)$. The realization  
of $X_{t+1}$ is then governed by the transition probabilities $p(t,i,a) = (p(t,i,j,a))_{j \in \mathcal{S}}$.

Define the subset $\Delta:=\{(t,s)\in \N_0\times \N_0, 0\leq t\leq s\}.$ Given a reward function $f(s, j, a; t, i)$ with $(t,s)$, we define $  
f^\varpi(s,j;t,i) := \int_A f(s, j, a; t, i) \varpi(da) $
for all $\varpi \in \mathcal{P}(A)$. For any $\pi \in \Pi$, the corresponding infinite-horizon pay-off functional is given by
\begin{equation}\label{1.2}
J^\pi(t,i) := \mathbb{E}_{t,i} \left[ \sum_{k=t}^\infty f^{\pi(k, X^\pi_{k})}(k, X_{k}^\pi;t,i) \right], \quad \forall (t,i)\in \N_0\times\mathcal{S}.   
\end{equation}
Due to the dependence structure of $f$ on the initial time and spatial states $t,i$, such a control problem is time-inconsistent. A special case is non-exponential discounting, e.g., $f(s,y;t,x)=\delta(s-t)g(y)$. 
\begin{equation}\label{1.3}
f(s,j, a;t,i)=\widehat \delta(s-t)g(j,a)
\end{equation}
where $g : \Sc\times A \to \R$ assigns a reward based on the current state $j$ and the action $a$ employed, and $\widehat \delta : [0, \infty) \to [0, 1]$ is a discount function, assumed to be nonincreasing with $\widehat \delta(0) = 1$.

In this paper, we will investigate an entropy-regularized version of $J^\pi(t, i)$. To this end, let us first introduce the notion of a regular relaxed feedback control.
A relaxed feedback control $\pi \in \Pi$ is regular if, for each $(t,i) \in \N_0\times S$, $\pi(t,i)$ is a density function belonging to $\Dc(A)$ such that $\mathcal{H}(\pi(t,i)) > -\infty$, where
\begin{equation}\label{1.4}
\mathcal{H}(\varpi) := - \int_A \ln(\varpi(a)) \varpi(a) da, \quad \forall \varpi \in \Dc(A).  
\end{equation}
We denote by $\Pi_r$ the subset of $\Pi$ containing all regular feedback relaxed controls. 

Now, for any $\lambda > 0$ and $\pi \in \Pi_r$, consider
\begin{equation}\label{1.5}
J_\lambda^\pi(t, i) := \mathbb{E}_{t,i} \left[ \sum_{k=t}^\infty \left( f^{\pi(k,X_k^\pi)}(k, X_k^\pi; t, i) + \lambda \widetilde\delta(k-t) \mathcal{H}\left(\pi(X_k^\pi)\right) \right) \right], \quad \forall i \in S,
\end{equation}
where $\widetilde\delta : \N_0 \to [0, 1]$ is an \textit{arbitrarily} chosen discount function which is nonincreasing with $\widetilde\delta(0) = 1$ and satisfies $\sum_{k=0}^\infty\widetilde\delta(k)<\infty$. 

\begin{Definition}\label{def:equilibrium}
Given $\lambda > 0$, we say $\pi \in \Pi_r$ is a regularized equilibrium if for any $(t,i)\in \N_0\times \Sc$,
\be \label{D1}
J_\lambda^{\varpi \otimes_1 \pi}(t, i) - J_\lambda^\pi(t, i) \leq 0, \quad \forall \varpi\in \Pc(A).  
\ee 
Similarly, we say $\pi \in \Pi$ is a relaxed equilibrium for the original problem if for any $(t,i) \in \N_0\times \Sc$,
\be \label{D2}
J^{\varpi \otimes_1 \pi}(t, i) - J^\pi(t, i) \leq 0, \quad \forall \varpi \in \Pc(A).    
\ee 
\end{Definition}

\subsection{Preliminaries and assumptions}

We introduce two weight functions $w, W: \Sc \to [1,\infty)$ satisfying
\be\label{eq: assum.wW}
\begin{cases}
\lim_{i\to \infty} w(i) = \lim_{i\to \infty} W(i)=\infty, \quad \lim_{i\to\infty} \frac{w(i)}{W(i)} = 0;\\
C_{W}:=\sup_{i\in \Sc}\frac{\ln(W(i))}{w(i)}<\infty.
\end{cases}
\ee
The first condition in \eqref{eq: assum.wW} implies that 
\be\label{eq: assum.wW0}  
C_w:=\sup_{i\in \Sc} \frac{w(i)}{W(i)}<\infty
\ee 
For a function $y:\Sc\to\mathbb R$, define the weighted norm 
$$\|y\|_{\wt}:= \sup_{i\in \Sc}\frac{|y(i)|}{W(i)},$$
and let
$
\Bc_{\wt}:=\{ y:\Sc\to \R: \|y\|_{\wt}<\infty \}.
$
We further define the space $\Xc:= \prod_{(t,i)\in \N_0\times \Sc} \Bc_{\wt}$, whose elements are functions $v(j;t,i): \Sc\times \N_0\times \Sc \to\R$  such that 
$$
\|v(\cdot;t,i) \|_{\wt}<\infty,\quad \forall (t,i)\in \N_0\times \Sc.
$$
We equip $\Xc$ with the product topology. Equivalently, a sequence
$\{v^n\}\subset\Xc$ converges to $v\in\Xc$ if and only if
$$
\|v^n(\cdot;t,i)-v(\cdot;t,i)\|_{\wt}\to 0, \quad\forall\,(t,i)\in\N_0\times\Sc.
$$
For convenience, we will also commonly treat $v(\cdot; t,i)$ as a vector in $\R^\infty,$ i.e., $$
v(\cdot; t,i) = (v(1;t, i), v(2; t, i), \dots).$$ 

Given a constant $M\in (0,\infty)$, we further introduce the subset $\Kc(M)$ of $\Xc$ as 
\be\label{eq:def.KM}  
\Kc(M):= \{ v(j;t,i)\in \Xc: |v(j;t,i)|\leq M(w(i)+w(j))\quad \forall (j,t,i)\in  \Sc\times \N_0\times \Sc\}.
\ee 
$\Kc(M)$ contains functions with growth on state variables $i, j$ bounded by the weight function $w$. Moreover, $\Kc(M)$ forms a compact subset of $\Xc$ as stated below.
\begin{Lemma}\label{lm:KM.compact}
For any $M\in (0,\infty)$, $\Kc(M)$ is a compact and convex subset of $\Xc$ under the product topology.
\end{Lemma}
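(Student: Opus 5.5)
Convexity is immediate: the defining constraint $|v(j;t,i)|\le M(w(i)+w(j))$ is preserved under convex combinations, and $\Kc(M)\subset\Xc$ because $|v(j;t,i)|/W(j)\le M(w(i)/W(j)+C_w)$, which is bounded in $j$ for each fixed $(t,i)$ since $W\ge1$. For compactness, the plan is to invoke Tychonoff's theorem. Since $\Xc=\prod_{(t,i)}\Bc_{\wt}$ carries the product topology, it suffices to show two things: that $\Kc(M)=\prod_{(t,i)}K_{t,i}$, where
$$K_{t,i}:=\{y\in\Bc_{\wt}: |y(j)|\le M(w(i)+w(j))\ \forall j\in\Sc\},$$
and that each $K_{t,i}$ is compact in $(\Bc_{\wt},\|\cdot\|_{\wt})$. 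The product identity holds because the constraint decouples across the coordinates $(t,i)$.

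It remains to prove that a fixed $K:=K_{t,i}$ is compact in the Banach space $\Bc_{\wt}$. I would do this sequentially, since $\Bc_{\wt}$ is a metric space.

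\textbf{Closedness.} Convergence in $\|\cdot\|_{\wt}$ implies pointwise convergence, and the pointwise constraint is preserved under pointwise limits. Hence $K$ is closed.

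\textbf{Sequential compactness.} Take a sequence $(y_n)\subset K$. For each $j$, the values $y_n(j)$ lie in the compact interval $[-M(w(i)+w(j)),M(w(i)+w(j))]$. A diagonal argument therefore gives a subsequence converging pointwise to some $y$, and $y\in K$ by closedness.

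\textbf{Upgrading to norm convergence.} This step is the key one, and it is where the strict separation $w/W\to0$ enters. Fix $\eps>0$. By \eqref{eq: assum.wW}, $w(i)/W(j)\to0$ and $w(j)/W(j)\to0$ as $j\to\infty$, so we can choose $N$ such that $2M(w(i)+w(j))/W(j)<\eps$ for all $j>N$. Then, along the subsequence,
$$\sup_{j>N}\frac{|y_n(j)-y(j)|}{W(j)}\le \sup_{j>N}\frac{2M(w(i)+w(j))}{W(j)}<\eps .$$
On the finitely many indices $j\le N$, pointwise convergence is uniform, so for $n$ large we also have $\max_{j\le N}|y_n(j)-y(j)|/W(j)<\eps$. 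Together these give $\|y_n-y\|_{\wt}\to0$.

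Consequently each $K_{t,i}$ is compact, and Tychonoff's theorem yields compactness of $\Kc(M)$ in the product topology. The only real obstacle is the tail estimate in the last step, and it is handled exactly by the condition $\lim_{i\to\infty} w(i)/W(i)=0$ in \eqref{eq: assum.wW}. The condition $C_W<\infty$ is not needed for this lemma.
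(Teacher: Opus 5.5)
Your proof is correct and follows essentially the same route as the paper: the same product decomposition $\Kc(M)=\prod_{(t,i)}K_{t,i}(M)$, the same uniform tail estimate driven by $W\to\infty$ and $w/W\to0$, and Tychonoff's theorem. The only difference is that you prove compactness of each $K_{t,i}(M)$ by diagonal extraction plus the tail bound (sequential compactness), whereas the paper shows closedness and total boundedness via a finite net on the first $R$ coordinates extended by zero; the two are interchangeable in the Banach space $\Bc_{\wt}$.
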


\begin{proof}
 For every $(t,i)\in\N_0\times\Sc$, define
  $$
 K_{t,i}(M) := \left\{ y\in\Bc_W: |y(j)| \leq M\bigl(w(i)+w(j)\bigr), \quad \forall j\in\Sc \right\}.
  $$
 Then, by the definition of $\Kc(M)$ in \eqref{eq:def.KM},
  $
 \Kc(M) = \prod_{(t,i)\in\N_0\times\Sc}K_{t,i}(M).
  $ 
 We first show that $K_{t,i}(M)$ is compact in $\Bc_W$ for every fixed
 $(t,i)$. Clearly, $K_{t,i}(M)$ is convex. Moreover, for any
 $y\in K_{t,i}(M)$,
  $$
 \frac{|y(j)|}{W(j)} \leq M\left( \frac{w(i)}{W(j)} + \frac{w(j)}{W(j)} \right).
  $$
 Since $W(j)\to\infty$ and $w(j)/W(j)\to0$ by
 \eqref{eq: assum.wW}, it follows that
 \be
 \label{eq:KM.uniform.tail} \lim_{R\to\infty} \sup_{y\in K_{t,i}(M)} \sup_{j>R} \frac{|y(j)|}{W(j)} =0.
 \ee
 In addition, by \eqref{eq: assum.wW0},
  $$
 \|y\|_{\wt} \leq M\left( w(i)+C_w \right), \quad \forall y\in K_{t,i}(M),
  $$
 so $K_{t,i}(M)$ is bounded in $\Bc_W$.
 
 We now prove total boundedness. Fix $\eps>0$. By
 \eqref{eq:KM.uniform.tail}, there exists $R\in\N$ such that
  $$
 \sup_{y\in K_{t,i}(M)} \sup_{j>R} \frac{|y(j)|}{W(j)} <\frac{\eps}{2}.
  $$
 On the finite-dimensional coordinates $\{1,\ldots,R\}$, 
  $$
\text{the set } \prod_{j=1}^{R} \left[ -M\bigl(w(i)+w(j)\bigr), M\bigl(w(i)+w(j)\bigr) \right] \text{  is compact.}
  $$
Hence it admits a finite $\eps/2$-net under the norm
  $
 \max_{1\leq j\leq R}({|y(j)|}/{W(j)}).
  $
By extending each element of this finite net by zero on $\{j>R\}$, we obtain a finite $\eps$-net of $K_{t,i}(M)$ in $\Bc_W$. Therefore $K_{t,i}(M)$ is totally bounded.
 
 We next show that $K_{t,i}(M)$ is closed in $\Bc_W$. Let
 $y^n\in K_{t,i}(M)$ and suppose that $\|y^n-y\|_{\wt}\to0$. Then, for
 each fixed $j\in\Sc$,
  $$
 |y^n(j)-y(j)| \leq W(j)\|y^n-y\|_{\wt} \longrightarrow0.
  $$
 Since, for any $n\in \N$, $ |y^n(j)| \leq M\bigl(w(i)+w(j)\bigr),$ passing to the limit gives
  $
 |y(j)| \leq M\bigl(w(i)+w(j)\bigr),
  $
 so $y\in K_{t,i}(M)$. Hence $K_{t,i}(M)$ is closed. Since $\Bc_W$ is a Banach space, total boundedness and closedness imply that $K_{t,i}(M)$ is compact.
 
 Finally, each $K_{t,i}(M)$ is compact and convex. Therefore, by
 Tychonoff's theorem,
  $$
 \Kc(M) = \prod_{(t,i)\in\N_0\times\Sc}K_{t,i}(M) \text{   is compact under the product topology of $\Xc$. }
  $$
Its convexity follows immediately from the convexity of all the coordinate sets $K_{t,i}(M)$.
\end{proof}

\begin{Remark}
One simple choice is to take $w$ to be a polynomial growth function and set $W=w^{1+\eps}$. 
  Also, the product topology on $\Xc$ is generated by the metric
  $$d(v, v'):= \sum_{(t,i)\in \N_0\times \Sc}\frac{1}{2^{t+i}} \left(  \|v(\cdot; t, i)- v'(\cdot; t, i)\|_{\wt} \wedge 1\right).$$
\end{Remark}

Now we introduce the assumptions on the reward function $f$ and transition function $p$.
\begin{Assumption}[Weighted boundedness of the reward function]\label{assum:reward-bound}
  There exists a constant $C_f \in(0,\infty)$ and a discount function $\delta(t):\N_0\to [0,\infty)$ such that for any $(t,s,i,j, a)\in\Delta\times\Sc^2\times A$:
  \begin{equation}\label{eq:assume.fgrow}
 |f(s,j, a; t, i)| \leq C_f \delta(s-t) [w(i)+w(j)].  
  \end{equation}
  And the discount function satisfies 
  $
  \sum_{t=0}^\infty \delta(t) < \infty.
  $
\end{Assumption}

\begin{Assumption}[Weighted boundedness of the transition function]\label{assum:lyapunov.p}
There exist constants $\gamma_w, \gamma_{W} \in (0,1)$ and $b_w, b_{W}\in (0,\infty)$ such that the following hold uniformly over $(t,i,a)$:
  \begin{align}
&\sum_{j \in S} p(t,i,j,a) w(j) \leq \gamma_w w(i) + b_w,   \label{eq:assume.pw}\\
&\sum_{j \in S} p(t,i,j,a) W(j) \leq \gamma_{W} W(i) + b_{W}. \label{eq:assume.pW} 
  \end{align}
\end{Assumption}

\begin{Remark}\label{rm:payoff.welldefined}
Assumptions \ref{assum:reward-bound}--\ref{assum:lyapunov.p} ensure that the pay-off functional in \eqref{1.2} is well-defined for every $\pi\in\Pi$. Indeed, iterating \eqref{eq:assume.pw} gives
$$
\E\left[w(X_s^\pi)\mid X_t^\pi=i\right]	\leq\gamma_w^{\,s-t}w(i)+\frac{b_w}{1-\gamma_w}, \quad s\geq t.
$$
Therefore, by Assumption \ref{assum:reward-bound}
$$
\begin{aligned}
\E\left[\left| f^{\pi(s,X_s^\pi)} (s,X_s^\pi;t,i) \right| \,\middle|\, X_t^\pi=i \right] &\leq C_f\delta(s-t)\left[ w(i)+ \E\left[ w(X_s^\pi)\mid X_t^\pi=i \right] \right] \\
&\leq C_f\delta(s-t) \left[ (1+\gamma_w^{\,s-t})w(i) + \frac{b_w}{1-\gamma_w} \right].
\end{aligned}
$$
As a result,
$$
\begin{aligned}
\E\left[ \sum_{s=t}^{\infty} \left| f^{\pi(s,X_s^\pi)} (s,X_s^\pi;t,i) \right| \right] &\leq C_f \sum_{m=0}^{\infty} \delta(m) \left[ (1+\gamma_w^m)w(i) + \frac{b_w}{1-\gamma_w} \right] <\infty,
\end{aligned}
$$
where the last inequality follows from $\sum_{m=0}^{\infty}\delta(m)<\infty$ and $\gamma_w\in(0,1)$. Thus the series defining $J^\pi(t,i)$ converges absolutely in $L^1$, and in particular $J^\pi(t,i)$ is finite for every $(t,i)\in\N_0\times\Sc$.
	
For the entropy-regularized pay-off $J_\lambda^\pi$ defined in \eqref{1.5}, an additional integrability condition on the entropy term is needed. For the Gibbs policies considered in Section \ref{sec:entropy}, such an estimate is provided by Lemma \ref{lm:entropy.est} together with the above moment estimate, and therefore the corresponding entropy-regularized pay-off is also well-defined.
\end{Remark}

\begin{Assumption}[Weighted Lipschitz condition on control]\label{assum:lipschitz}
There exist constants $L_f, L_p\in (0,\infty)$ such that the following hold uniformly for any $t, s, i, j$ and any pair of actions $a_1, a_2 \in A$:
\begin{align}
&|f(s, j, a_1; t, i) - f(s, j, a_2; t, i)| \leq L_f w(i) |a_1 - a_2|, \label{eq:assume.lipaf}\\
& \sum_{j \in S} |p(t,i,j, a_1) - p(t,i,j, a_2)|{W}(j) \leq L_p W(i) |a_1 - a_2|.  \label{eq:assume.lipap}
\end{align}
\end{Assumption}

We will also assume that the action space $A \subset \R^\ell$ fulfills a \textit{uniform cone condition}. To properly state the condition, for any $\iota \in [0, \pi/2]$, we note that
\bee
\textsf{cone}_\iota := \{a = (a_1, \dots, a_\ell) \in \R^\ell : a_1^2 + \dots + a_{\ell-1}^2 \leq \tan^2(\iota) a_\ell^2\}  
\eee
is a cone in $\R^{\ell}$ with vertex, axis, and angle being $0$, $a_1 = a_2 = \dots = a_{\ell-1} = 0$, and $\iota$, respectively. Now, given $a \in \R^\ell$, we denote by $\textsf{cone}_\iota(a)$ a rotated copy of $a + \textsf{cone}_\iota$ about $a$ in $\R^\ell$, which will be called \textit{a cone with vertex $a$ and angle $\iota$}.

\begin{Assumption}[Uniform cone condition]\label{assum:cone}
When $\ell > 1$, there exist $\vartheta > 0$ and $\iota \in (0, \pi/2]$ such that for any $a \in A$, there is a cone $\textsf{cone}_\iota(a)$ that satisfies $(\textsf{cone}_\iota(a) \cap B_\vartheta(a)) \subseteq A$. When $\ell = 1$, there exists $\vartheta > 0$ such that for any $a \in A$, either $[a - \vartheta, a]$ or $[a, a + \vartheta]$ is contained in $A$.
\end{Assumption}

\section{Existence of regularized equilibria for $\lambda>0$}\label{sec:entropy}
Let us first focus on the existence of regularized equilibria. Fix $\lambda > 0$. For any $\pi \in \Pi_r$, let us introduce the auxiliary value function $V^\pi_\lambda: \Sc\times \N_0\times\Sc\to \R$ as 
\begin{equation}\label{eq:def.V}
  V_{\lambda}^{\pi}(j;t, i) := \mathbb{E}\left[ \sum_{k=t+1}^{\infty} \left( f^{\pi(k, X_k^{\pi})}(k, X_k^{\pi}; t, i) + \lambda \widetilde\delta(k-t) \Hc(\pi(k, X_k^{\pi})) \right) \bigg| X^\pi_{t+1}=j\right].
\end{equation}
Then we observe from \eqref{1.5} and \eqref{eq:def.V} that
$$
\begin{aligned}
  J_{\lambda}^{\varpi \otimes_1 \pi}(t,i) =& f^{\varpi}(t,i;t,i) + \lambda \mathcal{H}(\varpi) + \mathbb{E}_{t,i} \left[ V_{\lambda}^{\pi}(X_{t+1}^{\varpi}; t, i) \right]\\
  = &\int_A \left[ f(t, i, a;t,i) - \lambda \ln((\varpi)(a)) + p(t,i,a) \cdot V_{\lambda}^{\pi}(\cdot; t, i) \right] \varpi(a) da.
\end{aligned}
$$
We set the Hamiltonian $H: \N_0\times \Sc\times\Bc_{\wt}\times A \to \R$ as
\be\label{eq:def.H}
H(t,i,y,a) :=  f(t,i,a;t,i) + p(t,i,\cdot,a)\cdot y.
\ee
Notice that the Hamiltonian in \eqref{eq:def.H} is
well-defined for every $y\in\Bc_{\wt}$. Indeed, for any
$(t,i)\in\N_0\times\Sc$ and $a\in A$,
$$
\begin{aligned}
\sum_{j\in\Sc}p(t,i,j,a)|y(j)|&\leq\|y\|_{\wt}\sum_{j\in\Sc}p(t,i,j,a)W(j)\leq\bigl(\gamma_WW(i)+b_W\bigr)\|y\|_{\wt}<\infty,
\end{aligned}
$$
where the second inequality follows from \eqref{eq:assume.pW}. So does $\Gamma_\lambda(t,y,i)$ defined below in \eqref{2.5}.

Hence, the best strategy $\pi^* \in \Dc(A)$ for the agent at the joint state $(t,i)$ should satisfy
\begin{equation}\label{2.3}
  \pi^*\in \arg\max_{\varpi \in \Dc(A)} \int_A \bigg[ H\left(t,i,  V_\lambda^\pi(\cdot; t,i), a\right)- \lambda \ln(\varpi(a)) \bigg] \varpi(a) da. 
\end{equation}
The maximizer on the right-hand side is unique and takes the explicit form:
\begin{equation}\label{2.4}
  \pi^*(t,i,a) := \frac{e^{\frac{1}{\lambda} \left[ f(t, i, a;t,i) +p(t,i,a) \cdot V_\lambda^\pi(\cdot; t, i)  \right]}}{\int_A e^{\frac{1}{\lambda}  \left[ f(t, i, a';t,i) +p(t,i,a') \cdot V_\lambda^\pi(\cdot; t, i)  \right]} da'}.
\end{equation}
As a result, we introduce a functional $\Gamma_\lambda : \N_0\times \Bc_{\wt}\times\mathcal{S} \to \mathcal{D}(A)$ defined by
\begin{equation}\label{2.5}
  \Gamma_\lambda(t, y, i)(a) := \frac{e^{\frac{1}{\lambda}(f(t,i,a;t,i) + p(t,i,a) \cdot y)}}{\int_A e^{\frac{1}{\lambda}(f(t,i,a' ;t,i) + p(t, i, a') \cdot y)} da'} \in \mathcal{D}(A),
\end{equation}
and we write the mapping $\Gamma_\lambda:\Xc \mapsto \Pi_r$ as
$$
\Gamma_\lambda(v)(t,i)=\Gamma_\lambda(t, v(\cdot; t, i), i)\quad \forall (t,i)\in \N_0\times \Sc.
$$
Then we can rewrite \eqref{2.4} as
\begin{equation}\label{2.6}
  \pi^*(t,i) = \Gamma_\lambda(V_\lambda^\pi)(t,i)\quad \forall (t,i)\in \N_0\times \Sc.
\end{equation}
This motivates us to define operators $\Phi_\lambda: \Pi_r \to \Pi_r$ 
by
\begin{align}
  \Phi_\lambda(\pi)(t,i) :=& \Gamma_\lambda(V^\pi_\lambda)(t,i)= \Gamma_\lambda(t, V_\lambda^\pi (\cdot; t, i),  i),\quad \forall (t,i)\in \N_0\times \Sc  \label{2.7}
\end{align}
and we conjecture that a fixed point of $\Phi_\lambda$ is a regularized equilibrium for \eqref{1.5}. In addition, 
we define an operator $\Psi_\lambda : \Xc \to \Xc$ by
\begin{equation}\label{2.8}
  \Psi_\lambda(v) := V_\lambda^{\Gamma_\lambda(v)}
\end{equation}
and conjecture that a fixed point of $\Psi_\lambda$ must equal $V_\lambda^\pi$ for some regularized equilibrium $\pi \in \Pi_r$. Proposition \ref{prop:fixed-points-entropy} below shows that our conjectures are correct, which leads to the existence of regularized equilibrium once we verify the existence of fixed points of $\Psi_\lambda$.

\begin{Proposition}[Regularized equilibria as fixed points]\label{prop:fixed-points-entropy}
  For a given $\lambda>0$, suppose $\pi\in\Pi_r$ satisfies that $V^\pi_\lambda\in \Xc$, then $\pi$ is a regularized relaxed equilibrium if and only if
  $
  \Phi_\lambda(\pi)=\pi.
  $
  Moreover, for any $v\in \Xc$ such that $\Psi_\lambda(v)\in \Xc$,
  $$
  v=V_\lambda^\pi \text{ for some regularized equilibrium }\pi\in\Pi_r
  \quad\Longleftrightarrow\quad
  \Psi_\lambda(v)=v.
  $$
  In particular, if $\Psi_\lambda(v)=v\in \Xc$, then $\Gamma_\lambda(v)\in\Pi_r$ is a regularized equilibrium.
\end{Proposition}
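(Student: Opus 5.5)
The proof rests on two ingredients: the one-step decomposition of $J_\lambda^{\varpi\otimes_1\pi}$ displayed just before \eqref{eq:def.H}, and the Gibbs variational principle. Fix $(t,i)$ and set $y:=V_\lambda^\pi(\cdot;t,i)\in\Bc_{\wt}$. This is well defined because $V_\lambda^\pi\in\Xc$, and then $a\mapsto H(t,i,y,a)$ is finite and, by \eqref{eq:assume.pW} and the bound on $f$, bounded on $A$. The decomposition gives
\[
J_\lambda^{\varpi\otimes_1\pi}(t,i)=\int_A\bigl[H(t,i,y,a)-\lambda\ln\varpi(a)\bigr]\varpi(a)\,da=: G_{t,i}(\varpi)
\]
for every density $\varpi\in\Dc(A)$. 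The case $\varpi=\pi(t,i)$ gives $J_\lambda^\pi(t,i)=G_{t,i}(\pi(t,i))$, since $\pi(t,i)\otimes_1\pi=\pi$. If $\varpi\in\Pc(A)$ is not absolutely continuous, or has $\Hc(\varpi)=-\infty$, we interpret the regularized payoff as $-\infty$, so the inequality \eqref{D1} is trivial for it. It therefore suffices to compare $G_{t,i}$ over $\Dc(A)$.

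The key step is the Gibbs identity. Let $g:=\Gamma_\lambda(t,y,i)$ and $Z:=\int_A e^{H(t,i,y,a')/\lambda}da'$. Since $H/\lambda=\ln g+\ln Z$,
\[
G_{t,i}(\varpi)=\lambda\ln Z-\lambda\int_A \varpi\ln\frac{\varpi}{g}\,da=\lambda\ln Z-\lambda\,\mathrm{KL}(\varpi\,\|\,g).
\]
Because $\mathrm{KL}\ge 0$, with equality iff $\varpi=g$ a.e., $G_{t,i}$ has the unique maximizer $g$ over $\Dc(A)$; this justifies \eqref{2.3}--\eqref{2.4}. Boundedness of $H$ in $a$ ensures $Z\in(0,\infty)$, since $\Leb(A)>0$ and $A$ is compact, and it also ensures $\Hc(g)>-\infty$, so $g\in\Dc(A)$ is admissible.

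First equivalence. If $\Phi_\lambda(\pi)=\pi$, then $\pi(t,i)=g$ maximizes $G_{t,i}$, so \eqref{D1} holds at every $(t,i)$. Conversely, if $\pi$ is a regularized equilibrium, take $\varpi=g$ in \eqref{D1}. This gives $G_{t,i}(g)\le G_{t,i}(\pi(t,i))$, hence $\mathrm{KL}(\pi(t,i)\|g)\le0$, so $\pi(t,i)=g=\Phi_\lambda(\pi)(t,i)$ a.e. Densities are identified a.e., so $\Phi_\lambda(\pi)=\pi$.

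Second equivalence. Suppose $\Psi_\lambda(v)=v$ and set $\pi:=\Gamma_\lambda(v)\in\Pi_r$. Then $V_\lambda^\pi=\Psi_\lambda(v)=v\in\Xc$, and $\Phi_\lambda(\pi)=\Gamma_\lambda(V_\lambda^\pi)=\Gamma_\lambda(v)=\pi$. By the first part, $\pi$ is a regularized equilibrium with $v=V_\lambda^\pi$; this also gives the ``in particular'' statement. Conversely, suppose $v=V_\lambda^\pi$ for an equilibrium $\pi$, so $V_\lambda^\pi\in\Xc$. The first part gives $\pi=\Phi_\lambda(\pi)=\Gamma_\lambda(v)$, and then $\Psi_\lambda(v)=V_\lambda^{\Gamma_\lambda(v)}=V_\lambda^\pi=v$.

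The main obstacle is rigor in the decomposition identity rather than the algebra. One must check that $J_\lambda^{\varpi\otimes_1\pi}$ splits as the first-step term plus $\E_{t,i}[V_\lambda^\pi(X_{t+1};t,i)]$, which is a tower-property/Markov argument, and that all integrals are finite. For finiteness I would use $y\in\Bc_{\wt}$ with the bound $\sum_j p|y(j)|\le(\gamma_WW(i)+b_W)\|y\|_{\wt}$ shown after \eqref{eq:def.H}, together with $|f|\le C_f(w(i)+w(j))$. These make $H(t,i,y,\cdot)$ bounded, so every quantity above is finite and the KL manipulation is legitimate.
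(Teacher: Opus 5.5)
Your proof is correct and follows the paper's route. You use the one-step decomposition of $J_\lambda^{\varpi\otimes_1\pi}$ to reduce \eqref{D1} to $\pi(t,i)$ maximizing the regularized Hamiltonian over $\Dc(A)$, identify the maximizer as the Gibbs density, and obtain both fixed-point equivalences from the substitutions $\pi=\Gamma_\lambda(V_\lambda^\pi)$ and $v=V_\lambda^{\Gamma_\lambda(v)}$. Three of your additions make explicit what the paper assumes in \eqref{2.3}--\eqref{2.5}: the identity $G_{t,i}(\varpi)=\lambda\ln Z-\lambda\,\mathrm{KL}(\varpi\,\|\,g)$, the $-\infty$ convention for deviations that are not absolutely continuous, and the a.e.\ identification of densities.
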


\begin{proof}
  Fix an arbitrary $\lambda > 0$ and take $(t,i)\in \N_0\times \Sc$. Given $\varpi\in \Dc(A)$ and $\pi \in \Pi_r$ with $V^\pi_\lambda\in \Xc$, since $J_{\lambda}^{\varpi \otimes_1 \pi}(t, i) = f^{\varpi}(t, i;t,i) + \lambda \Hc(\varpi) + \mathbb{E}_{t,i}[V_{\lambda}^{\pi}(X_1^{\varpi}; t, i)]$, a direct calculation shows
  \begin{align*}\label{a1}
 J_{\lambda}^{\varpi \otimes_1 \pi}(t, i) - J_{\lambda}^{\pi}(t, i) 
 &= f^{\varpi}(t,i;t,i) + \lambda \Hc(\varpi) + \int_A (p(t,i,a) \cdot V_{\lambda}^{\pi}(\cdot, t, i))\varpi(a) da \\
 &\quad - \left( f^{\pi(t,i)}(t,i;t,i) + \lambda \Hc(\pi(t,i)) + \int_A (p(t,i,a) \cdot V_{\lambda}^{\pi}(\cdot; t, i))\pi(t,i)(a) da \right).
  \end{align*}
  It then follows that \eqref{D1} holds 
   if and only if $\pi(t, i) \in \mathcal{D}(A)$ fulfills
  \begin{equation}\label{3.1}
 \pi(t, i) \in \argmax_{\varpi \in \mathcal{D}(A)}\left\{ \int_A \left[ f(t,i, a; t, i) - \lambda \ln(\varpi(a)) + p(t,i, a) \cdot V_{\lambda}^{\pi}(\cdot; t, i) \right] \varpi(a)da \right\}. 
  \end{equation}
  By the same arguments as in \eqref{2.3}--\eqref{2.5}, we can express \eqref{3.1} equivalently as $\pi(t,i) = \Gamma_{\lambda}(V_{\lambda}^{\pi})(t,i)$ for all $(t,i)\in \N_0\times\Sc$, which amounts to $\pi = \Phi_{\lambda}(\pi)$.

  
  If $v \in \Xc$ satisfies that $v = V_{\lambda}^{\pi}$ for a regularized equilibrium $\pi \in \Pi_r$, then by $\pi = \Phi_{\lambda}(\pi) = \Gamma_{\lambda}(V_{\lambda}^{\pi})$, $v = V_{\lambda}^{\pi} = V_{\lambda}^{\Gamma_{\lambda}(v)} = \Psi_{\lambda}(v)$. Conversely, if $v\in \Xc$ satisfies that $v = \Psi_{\lambda}(v)\in\Xc$, set $\pi := \Gamma_{\lambda}(v) \in \Pi_r$. Then, $v = V_{\lambda}^{\pi}$ and thus $\pi = \Gamma_{\lambda}(v) = \Gamma_{\lambda}(V_{\lambda}^{\pi}) = \Phi_{\lambda}(\pi)$. This implies that $\pi$ is a regularized equilibrium. 
\end{proof}

Next we show the existence of fixed points for $\Psi_\lambda$, which tells the existence of regularized equilibria,

\begin{Theorem}\label{thm:existence.regular}
Assume that Assumptions~\ref{assum:reward-bound}--\ref{assum:cone} hold. 
\bi  
\item[(i)] For every $\lambda>0$, there exists a fixed point of $\Psi_\lambda$, and hence a regularized equilibrium $\pi^*\in\Pi_r$ for the entropy-regularized problem \eqref{1.5} exists.
 
\item[(ii)] Moreover, there exists a finite constant $M^*$ independent of $\lambda\in(0,1]$ such that, for each $\lambda\in(0,1]$, a regularized equilibrium $\pi$ exists with its auxiliary function $V^\pi_\lambda$ belonging to $\Kc(M^*)$. 
\ei 
\end{Theorem}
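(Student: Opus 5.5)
We will apply the Schauder--Tychonoff fixed-point theorem to $\Psi_\lambda$ restricted to $\Kc(M)$. The space $\Xc$ with the product topology is a locally convex (indeed metrizable) topological vector space. By Lemma~\ref{lm:KM.compact}, $\Kc(M)$ is a nonempty compact convex subset of it. So it suffices to find $M$ (independent of $\lambda\in(0,1]$) with $\Psi_\lambda(\Kc(M))\subseteq\Kc(M)$, and to show that $\Psi_\lambda$ is continuous on $\Kc(M)$. Proposition~\ref{prop:fixed-points-entropy} then converts a fixed point $v=\Psi_\lambda(v)\in\Kc(M)$ into the regularized equilibrium $\Gamma_\lambda(v)$ with $V_\lambda^{\Gamma_\lambda(v)}=v\in\Kc(M^*)$. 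This gives (ii) for $\lambda\le1$. For (i) with $\lambda>1$, the same argument runs with an $M$ that is allowed to depend on $\lambda$.

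\textbf{Step 1 (entropy bound).} For $y$ with $|y(j)|\le M(w(i)+w(j))$, the exponent $H(t,i,y,a)$ is Lipschitz in $a$ with constant of order $L_f w(i)+L_p W(i)\|y\|_{\wt}$, by \eqref{eq:assume.lipaf}--\eqref{eq:assume.lipap}. The uniform cone condition (Assumption~\ref{assum:cone}) gives a lower bound on the Lebesgue measure of the set where the Gibbs density is close to its maximum. This yields $\ln\Gamma_\lambda(t,y,i)(a)\ge -C\big(1+\ln(\lambda^{-1}(L_fw(i)+L_pW(i)\|y\|_{\wt}))\big)$ on part of $A$. Combined with $\Hc\le\ln\Leb(A)$, we obtain
\[
\lambda|\Hc(\Gamma_\lambda(t,y,i))|\le C\lambda\big(1+|\ln\lambda|+\ln W(i)+\ln(1+M)\big).
\]
Using $\ln W(i)\le C_W w(i)$, this is at most $C(1+\ln(1+M))\,w(i)$ uniformly in $\lambda\in(0,1]$. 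The key point is that the growth in $M$ is only \emph{logarithmic}.

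\textbf{Step 2 (self-mapping).} For $\pi=\Gamma_\lambda(v)$, write $V_\lambda^\pi(j;t,i)$ as an expectation over the chain started at $X_{t+1}=j$. By Remark~\ref{rm:payoff.welldefined}, the reward part is bounded by $C_f\sum_m\delta(m)\,[w(i)+\gamma_w^{m}w(j)+b_w/(1-\gamma_w)]\le C_1(w(i)+w(j))$. The entropy part at a visited state $X_k$ is controlled by Step 1 with $i$ replaced by $X_k$, since $v(\cdot;k,X_k)\in\Kc(M)$. Taking expectations with the $w$-drift gives a bound $C_2(1+\ln(1+M))\sum\widetilde\delta\,(w(j)+1)$. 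Altogether,
\[
|\Psi_\lambda(v)(j;t,i)|\le \big(C_1+C_2\ln(1+M)\big)(w(i)+w(j)),
\]
and we choose $M^*$ large enough that $C_1+C_2\ln(1+M^*)\le M^*$. The constants do not depend on $\lambda\in(0,1]$. Membership in $\Xc$ follows from $w\le C_wW$.

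\textbf{Step 3 (continuity), the main obstacle.} Let $v^n\to v$ in the product topology within $\Kc(M^*)$. For each $(t,i)$, $p(t,i,a)\cdot v^n(\cdot;t,i)\to p(t,i,a)\cdot v(\cdot;t,i)$ uniformly in $a$, by the $W$-drift bound. Hence the Gibbs densities converge uniformly, and so do their entropies, at every $(t,i)$. To show $\|\Psi_\lambda(v^n)(\cdot;t,i)-\Psi_\lambda(v)(\cdot;t,i)\|_{\wt}\to0$, we split into two tails.
\begin{itemize}
\item \emph{Large $j$.} Both values are bounded by $M^*(w(i)+w(j))$, and $w/W\to0$, so the region $j>R$ is uniformly small.
\item \emph{Fixed $j\le R$, long horizons.} We truncate the time sum at $t+K$, using summability of $\delta,\widetilde\delta$ and the uniform bounds of Step 2.
\item \emph{Remaining finite part.} This involves finitely many steps, but countably many states. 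The finite-step laws converge in total variation weighted by $w$, because the one-step kernels converge pointwise and their $W$-moments are uniformly bounded, which gives uniform integrability of $w$.
\end{itemize}
Controlling this infinite-state, finite-horizon convergence through the $w$/$W$ separation is where the argument is most delicate. Once it is done, Schauder--Tychonoff applies and both (i) and (ii) follow.
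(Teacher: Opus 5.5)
Your proposal is correct and follows essentially the same route as the paper. It uses a logarithmic-in-$M$ entropy bound from the cone condition (Lemma~\ref{lm:entropy.est}), a sublinear self-map bound giving a $\lambda$-uniform invariant set $\Kc(M^*)$ (Lemma~\ref{lm:self-mapping}), continuity via uniform Gibbs convergence, finite-step law convergence with $w/W$ uniform integrability, time truncation and the spatial $W$-tail (Lemma~\ref{lm:Psi.continuous}), and closes with Schauder--Tychonoff and Proposition~\ref{prop:fixed-points-entropy}. One small slip in Step 1: the lower bound on $\Hc$ needs an upper bound on the density, namely $\Gamma_\lambda(t,y,i)(\bar a)\le\bigl(\int_A e^{-L|a-\bar a|/\lambda}\,da\bigr)^{-1}\le C\max\{1,(L/\lambda)^{\ell}\}$ for every $\bar a\in A$, which is what the cone condition delivers; your displayed lower bound on $\ln\Gamma_\lambda$ on part of $A$ points the wrong way, although your final estimate is right.
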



\subsection{Proof of Theorem \ref{thm:existence.regular}}

To begin with, we first improve the entropy estimate given in \cite[Lemma 1]{bayraktar2025relaxed} for finite-state Markov chain in bounded case, as a refined version, to the current unbounded case. 

\begin{Lemma}\label{lm:entropy.est}
Given an arbitrary $M\in(0,\infty)$ and consider the subset $\Kc(M)$. Then for every $\lambda>0$, there exist finite constants $C_0, C_1,C_2, C_3>0$, independent of $\lambda$, $M$ and $(t,i)\in \N_0\times \Sc$, such that
$$
|\mathcal H(\Gamma_\lambda(t, v(\cdot; t,i),i))| \leq  C_0 +C_1|\ln(\lambda)| +C_2\ln(1+M) +C_3 w(i),\quad \forall v\in \Kc(M).
$$
\end{Lemma}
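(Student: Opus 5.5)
Fix $(t,i)$ and $v\in\Kc(M)$, and write $y=v(\cdot;t,i)$ and $h(a):=f(t,i,a;t,i)+p(t,i,a)\cdot y$, so that $\Gamma_\lambda(t,y,i)=e^{h/\lambda}/Z$ with $Z=\int_A e^{h(a')/\lambda}da'$. A direct computation gives
$$
\Hc(\Gamma_\lambda(t,y,i)) = \ln Z - \frac{1}{\lambda}\int_A h(a)\,\Gamma_\lambda(t,y,i)(a)\,da = \int_A \frac{h^*-h(a)}{\lambda}\,\Gamma_\lambda(t,y,i)(a)\,da + \ln\Big(\int_A e^{(h(a)-h^*)/\lambda}da\Big),
$$
where $h^*:=\max_A h$. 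The first term is nonnegative and the second is at most $\ln \Leb(A)$. Hence $\Hc\le \ln\Leb(A)$, which is the elementary upper bound: entropy on $A$ is maximized by the uniform density. The whole task is therefore a lower bound on $\Hc$, and I will follow the strategy of \cite[Lemma 1]{bayraktar2025relaxed}: show that $h$ is Lipschitz in $a$ with constant $L$, and then use the cone condition (Assumption~\ref{assum:cone}) to bound $Z$ from below.

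\textbf{Step 1 (Lipschitz constant of $h$).} By \eqref{eq:assume.lipaf} and \eqref{eq:assume.lipap},
$$
|h(a_1)-h(a_2)|\le \Big(L_f w(i)+\|y\|_{\wt}L_pW(i)\Big)|a_1-a_2|.
$$
Since $y\in K_{t,i}(M)$, we have $\|y\|_{\wt}\le M(w(i)+C_w)$, so
$$
L\le C(1+M)\,w(i)W(i)\le C(1+M)\,w(i)^2e^{C_Ww(i)},
$$
using $\ln W\le C_Ww$ from \eqref{eq: assum.wW}. The point is that $\ln L\le C+\ln(1+M)+C' w(i)$: the logarithm of the Lipschitz constant grows only linearly in $w(i)$. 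This is exactly why the condition $C_W<\infty$ is imposed.

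\textbf{Step 2 (lower bound on $Z$ via the cone condition).} Let $a^*$ be a maximizer of $h$; it exists because $h$ is continuous and $A$ is compact. On $\textsf{cone}_\iota(a^*)\cap B_r(a^*)\subseteq A$, with $r\le\vartheta$, we have $h\ge h^*-Lr$. The volume of this set is $c_\iota r^\ell$. Therefore
$$
\int_A e^{(h-h^*)/\lambda}da \ge c_\iota r^\ell e^{-Lr/\lambda}.
$$
Choose $r=\min(\vartheta,\lambda/L)$, which gives $\ln\int_A e^{(h-h^*)/\lambda}\ge -C-\ell\ln(1+L/\lambda)$.

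\textbf{Step 3 (bounding the first term).} Write $g:=(h^*-h)/\lambda\ge0$, so the first term is the mean of $g$ under the Gibbs density $e^{-g}/\int e^{-g}$. I would bound it by splitting $A$ into $\{g\le K\}$ and $\{g>K\}$:
$$
\int g\,e^{-g}\le K\,\Leb(A)+\sup_{s>K}se^{-s/2}\int e^{-g/2},
$$
which yields
$$
\mathbb E[g]\le K+\frac{C e^{-K/2}}{\int e^{-g}}.
$$
Take $K=2\big|\ln \int e^{-g}\big|+C$. Alternatively, one can use the convexity of $\mu\mapsto\ln\int e^{-\mu g}$ in $\mu$ at $\mu=1$, which gives $\mathbb E[g]\le \ln\int e^{-g/2}-\ln\int e^{-g}\le C-\ln\int e^{-g}$ more cleanly; this is the route I would take. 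Either way the first term is bounded by $C+\ell\ln(1+L/\lambda)$.

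\textbf{Step 4 (combining).} Putting the steps together,
$$
|\Hc|\le C+2\ell\ln(1+L/\lambda)\le C+2\ell\big(|\ln\lambda|+\ln(1+L)\big),
$$
and by Step 1, $\ln(1+L)\le C+\ln(1+M)+C'w(i)$. All the constants depend only on $A$, $\ell$, $\iota$, $\vartheta$, $L_f$, $L_p$, $C_w$ and $C_W$, so they are independent of $\lambda$, $M$ and $(t,i)$, as the statement requires.

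I expect the main obstacle to be Step 1: tracking the Lipschitz constant through the weighted norm, since $W(i)$ rather than $w(i)$ appears there. This is resolved only by taking logarithms and invoking $C_W$. Steps 2–3 are routine adaptations of the finite-state argument.
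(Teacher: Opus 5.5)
Your proposal is correct and is essentially the paper's proof: the paper bounds the Gibbs density at every $\bar a\in A$ by $1/\int_A e^{-L_{M,i}|a-\bar a|/\lambda}\,da$, which controls $\sup_A\Gamma_\lambda=1/\int_A e^{-g}$ (your quantity). It then lower-bounds this integral via the cone condition and concludes with $\int_A\Gamma_\lambda\ln\Gamma_\lambda\le\ln\sup_A\Gamma_\lambda$ (equivalent to your $\mathbb{E}[g]\ge 0$), $\ln W\le C_W w$, and the maximum-entropy upper bound. Your Step 3 is unnecessary, since the lower bound on $\mathcal H$ needs only $\mathbb{E}[g]\ge 0$ and the upper bound is maximum entropy. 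Two harmless slips should also be fixed: the inference ``first term $\ge 0$, second term $\le\ln\operatorname{Leb}(A)$, hence $\mathcal H\le\ln\operatorname{Leb}(A)$'' does not follow on its own (the maximum-entropy principle you cite is the right justification), and the tangent-line convexity bound in Step 3 should read $\mathbb{E}[g]\le 2\bigl(\ln\int_A e^{-g/2}-\ln\int_A e^{-g}\bigr)$.
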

\begin{proof} 
Take an arbitrary $M\in(0,\infty)$ and consider $v\in \Kc(M)$. By \eqref{eq: assum.wW0} and the definition of $\Kc(M)$ in \eqref{eq:def.KM},
\be\label{eq:entropyest.0}  
\|v(\cdot; t, i)\|_{\wt}= \sup_{j\in \Sc} \frac{|v(j;t,i)|}{W(j)}\leq \sup_{j\in \Sc}\frac{M(w(i)+w(j))}{W(j)}\leq M(w(i)+C_w).
\ee 

Fix a state $(t, i)\in \N_0\times\Sc$ and a reference point $\bar{a} \in A$. Recall \eqref{eq:def.H}.
For any $a\in A$, by \eqref{eq:assume.lipaf}, \eqref{eq:assume.lipap}, and then \eqref{eq:entropyest.0}, we have
\be\label{eq:entropyest.1} 
\begin{aligned}
&|H(t,i, v(\cdot ;t,i), a) - H(t,i, v(\cdot ;t,i), \bar a)| \\
& \leq |f(t,i,a; t,i) - f(t,i,\bar{a}; t, i)| + \sum_{j \in S} |p(t,i,j,a)-p(t,i,j,\bar a)| \, |v(j;t, i)| \\
& \leq L_f w(i) |a - \bar{a}| +  \sum_{j \in S} |p(t,i,j,a)-p(t,i,j,\bar a)| \, W(j) \|v(\cdot; t, i)\|_{\wt} \\
& \leq L_f w(i) |a - \bar{a}| + L_p W(i) |a - \bar{a}|  \|v(\cdot; t, i)\|_{\wt} \\
& \leq \left[  L_f w(i)  +  L_p M(w(i)+C_w) W(i)  \right] |a - \bar{a}|.
\end{aligned}
\ee
The Gibbs distribution is given by
\bee
\Gamma_{\lambda}(t, v(\cdot; t, i),i)(a) = \frac{\exp\left(\frac{1}{\lambda} H(t, i, v(\cdot; t, i), a)\right)}{Z},
\quad \text{where } Z := \int_{A} \exp\left(\frac{1}{\lambda} H( t, i, v(\cdot; t, i),a')\right) da'. 
\eee
Set $L_{M,i}:= L_f w(i)  +  L_p M(w(i)+C_w) W(i)$. From \eqref{eq:entropyest.0}, for any $a \in A$,  
$$H(t, i, v(\cdot; t, i), a) \geq H(t, i, v(\cdot; t, i), \bar{a}) - L_{M,i} |a - \bar{a}|,$$
which gives
\begin{equation}\label{3.3}
Z \geq \exp\left(\frac{1}{\lambda} H(t, i, v(\cdot; t, i), \bar{a}) \right) \int_{A} \exp\left(-\frac{1}{\lambda} L_{M,i} |a' - \bar{a}|\right) da'.   
\end{equation}
Thus, the Gibbs density at $\bar a$ satisfies:
$\Gamma_{\lambda}(y,i)(\bar{a}) \leq \frac{1}{\int_{A} \exp\left(-\frac{1}{\lambda} L_{M, i} |a - \bar{a}|\right) da}$.

Let us first derive an estimate for dimension $\ell>1$. By Assumption~\ref{assum:cone}, there exists a cone $\textsf{cone}_\iota(\bar{a})$ such that  
\begin{equation}\label{3.4}
\int_{A} e^{-\frac{1}{\lambda} L_{M, i} |a - \bar{a}|} da \geq \int_{\textsf{cone}_\iota(\bar{a})\cap B_{\theta}(\bar{a})} e^{-\frac{1}{\lambda} L_{M, i} |a - \bar{a}|} da. 
\end{equation}
By applying a translation and rotation and letting $u = a - \bar{a}$, we have
\begin{equation}\label{3.5}
\int_{\textsf{cone}_\iota(\bar{a})\cap B_{\theta}(\bar{a})} e^{-\frac{1}{\lambda} L_{M, i} |a - \bar{a}|} da = \int_{\textsf{cone}_\iota \cap B_{\theta}(0)} e^{-\frac{1}{\lambda} L_{M, i} |u|} du
\end{equation}
where $\textsf{cone}_\iota$ is a cone with vertex at the origin and angle $\iota$. We now estimate the integral on the right-hand side of \eqref{3.5} in two cases:\\
\textbf{Case 1:} $\frac{1}{\lambda} L_{M, i} \theta \leq 1$. For $u \in \textsf{cone}_\iota \cap B_{\theta}(0)$, we have $|u| \leq \theta$, so $e^{-\frac{1}{\lambda} L_{M, i}|u|} \geq e^{-1}$. Therefore,  
\begin{equation}\label{3.6}
\int_{\textsf{cone}_\iota \cap B_{\theta}(0)} e^{-\frac{1}{\lambda} L_{M, i} |u|} du \geq e^{-1} \mathrm{Leb}(\textsf{cone}_\iota \cap B_{\theta}(0)) =: K_{0} 
\end{equation}
\textbf{Case 2:} $\frac{1}{\lambda} L_{M, i} \theta > 1$. Using $\ell$-dimensional spherical coordinates, denote  $K_{1} := \int_{\textsf{cone}_\iota \cap S^{\ell - 1}} d\omega >0 \quad (\text{solid angle constant}), \quad K_{2} := \int_{0}^{1} z^{\ell - 1} e^{-z} dz > 0.$ Then  
\be\label{3.7}
\begin{aligned}
\int_{\textsf{cone}_\iota \cap B_{\theta}(0)} e^{-\frac{1}{\lambda} L_{M, i} |u|} du &= K_{1} \int_{0}^{\theta} r^{\ell - 1} e^{-\frac{1}{\lambda} L_{M, i} r} dr \\
&= K_{1} \left(\frac{1}{\lambda} L_{M, i}\right)^{-\ell} \int_{0}^{\frac{1}{\lambda} L_{M, i} \theta} z^{\ell - 1} e^{-z} dz \quad (z = \frac{1}{\lambda} L_{M, i} r) \\
&\geq K_{1} \left(\frac{1}{\lambda} L_{M, i}\right)^{-\ell} \int_{0}^{1} z^{\ell - 1} e^{-z} dz = K_{1} K_{2} \left(\frac{1}{\lambda} L_{M, i}\right)^{-\ell}.
\end{aligned}
\ee
Combining both cases, we obtain  
$$\int_{A} e^{-\frac{1}{\lambda} L_{M, i}  |a - \bar{a}|} da \geq \min \left\{ K_{0}, \, K_{1} K_{2}  \left(\frac{1}{\lambda} L_{M, i}\right)^{-\ell} \right\}.$$
We then conclude from \eqref{3.6} and \eqref{3.7} that, for any $a \in A$,  
\begin{equation*}
\begin{aligned}
\Gamma_{\lambda}(t, v(\cdot; t, i),i)(a)&  \leq \max \left\{ \frac{1}{K_{0}}, \, \frac{1}{K_{1} K_{2}} \left(\frac{1}{\lambda} L_{M, i}\right)^{\ell}  \right\} \\
&=  \max \left\{ \frac{1}{K_{0}}, \, \frac{1}{K_{1} K_{2}}\lambda^{-\ell}\left[ L_f w(i)  +  L_p M(w(i)+C_w) W(i) \right]^{\ell}  \right\}\\
&\leq  \max \left\{ \frac{1}{K_{0}}, \, \frac{1}{K_{1} K_{2}}\lambda^{-\ell}\left[ \left(L_f+L_p\right)\left(1+C_w\right)(1+M)w(i) W(i) \right]^{\ell}  \right\}.
\end{aligned}
\end{equation*}
Notice that $w(i), W(i) \geq 1$. Taking logarithms of above inequality gives
\be\label{eq:lm.Hest} 
\begin{aligned}
\ln(\Gamma_{\lambda}(t, v(\cdot; t, i),i)(a))\leq &\max\left\{|-\ln(K_0)|, \ell\left| \ln\left(\frac{(L_f +L_p)(1+C_w)}{K_1K_2} \right) \right| \right\}\\
&+\ell |\ln\lambda|+\ell \ln(1+M)+\ell\ln(W(i)) + \ell \ln(w(i))\\
\leq &\max\left\{|-\ln(K_0)|, \ell\left| \ln\left(\frac{(L_f +L_p)(1+C_w)}{K_1K_2} \right) \right| \right\}\\
&+\ell |\ln\lambda|+\ell \ln(1+M)+\ell (C_{W}+1)w(i),
\end{aligned}
\ee 
where the last inequality follows from the second line in \eqref{eq: assum.wW}.
Hence, $\Hc(\Gamma_{\lambda}(t, v(\cdot; t, i),i))$ is bounded by the right-hand side of the above inequality.

For $\ell=1$, by following arguments similar to the above, with $\textsf{cone}_\iota \cap B_{\theta}(0)$, $K_0$, $K_1$, and $K_2$ replaced by $[0,\zeta]$, $e^{-1}\zeta$, $1$, and $\int_0^1 e^{-z}dz=1-e^{-1}$, respectively, we also obtain the upper bound in \eqref{eq:lm.Hest} .

By the principle of maximum entropy, the uniform distribution maximizes entropy:  
\begin{equation}\label{3.10}
\int_{A} \Gamma_{\lambda}(v)(t, i, a) \ln \Gamma_{\lambda}(v)(t,i,a) da \geq -\ln (\Leb(A)).
\end{equation}
Thus, the desired result follows.

\end{proof}

Now we provide certain estimates which shall be used to show the self mapping of $\Psi$.
\begin{Lemma}\label{lm:est.0}
For every $\pi\in\Pi$ and $(t,i)\in\mathbb N_0\times \Sc$, the following moment estimate holds:
\be\label{eq:lm0.estWX} 
\left| \E[w(X^\pi_s)\mid X^\pi_t=i] \right|
\leq \gamma^{s-t}w(i)+\frac{b_w}{1-\gamma_w},\quad \forall s\geq t.
\ee 
Consequently,
the reward term satisfies, for any $0\leq t< s$ and any $j\in\Sc$ that
\be\label{eq:lm0.estfX} 
\left| \E\left[f^{\pi(s,X^\pi_{s})}(s,
X_{s};t,i)\mid X^\pi_{t+1}=j\right]\right|
\leq C_f\delta(s-t)\left( w(i) + \gamma_w^{s-t-1}w(j)+\frac{b_w}{1-\gamma_w} \right).
\ee
\end{Lemma}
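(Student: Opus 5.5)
The plan is to prove the moment estimate \eqref{eq:lm0.estWX} by iterating the Lyapunov drift condition \eqref{eq:assume.pw}, exactly as sketched in Remark~\ref{rm:payoff.welldefined}. We then deduce \eqref{eq:lm0.estfX} by conditioning on $X^\pi_{t+1}=j$ and applying the growth bound \eqref{eq:assume.fgrow}. Throughout, the constant $\gamma$ in \eqref{eq:lm0.estWX} is read as $\gamma_w$.

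\textbf{Step 1: the moment estimate.} Set $m_s:=\E[w(X^\pi_s)\mid X^\pi_t=i]$ for $s\geq t$, which lies in $[1,\infty]$ a priori.
\begin{itemize}
\item Condition on $X^\pi_{s}$ and integrate the transition kernel against the relaxed action $\pi(s,X^\pi_s)$.
\item Since \eqref{eq:assume.pw} holds uniformly in $a$, integrating it in $a$ against $\pi(s,k)(da)$ gives $\sum_j p^{\pi(s,k)}(s,k,j)w(j)\leq \gamma_w w(k)+b_w$ for every state $k$.
\item Tonelli's theorem (everything is nonnegative) then yields the recursion $m_{s+1}\leq \gamma_w m_s+b_w$, with $m_t=w(i)$.
\item Induction gives $m_s\leq \gamma_w^{s-t}w(i)+b_w\sum_{r=0}^{s-t-1}\gamma_w^r\leq \gamma_w^{s-t}w(i)+\frac{b_w}{1-\gamma_w}$.
\end{itemize}
In particular each $m_s$ is finite, and the absolute value in \eqref{eq:lm0.estWX} is harmless because $w\geq 1$.

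\textbf{Step 2: the reward bound.} Fix $0\leq t<s$ and $j$. By the Markov property of $X^\pi$ under the feedback policy $\pi$, the law of $(X^\pi_r)_{r\geq t+1}$ given $X^\pi_{t+1}=j$ is that of the chain started at time $t+1$ in state $j$.
\begin{itemize}
\item First pass the absolute value inside the expectation.
\item Use \eqref{eq:assume.fgrow}, integrated in $a$ against $\pi(s,X_s)$: this gives $|f^{\pi(s,X_s)}(s,X_s;t,i)|\leq C_f\delta(s-t)\,[w(i)+w(X_s)]$. This is valid since $(t,s)\in\Delta$ and the bound is uniform in $a$.
\item Take conditional expectations and apply Step 1 with starting pair $(t+1,j)$ in place of $(t,i)$. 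This gives $\E[w(X_s)\mid X_{t+1}=j]\leq \gamma_w^{s-t-1}w(j)+\frac{b_w}{1-\gamma_w}$.
\end{itemize}
Combining the two bounds yields \eqref{eq:lm0.estfX}.

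\textbf{Main obstacle.} The only real care needed is measure-theoretic. We must justify the interchange of the sum over the countable state space with the action integral and the conditional expectation. Tonelli handles this by nonnegativity of $w$. We must also check that the conditioning event $\{X^\pi_{t+1}=j\}$ is handled consistently. For states $j$ unreachable from $(t,i)$, we interpret the conditional expectation as the expectation for the chain started at $(t+1,j)$, which is the natural convention for feedback policies. With these points settled, the rest is a routine geometric-series computation.
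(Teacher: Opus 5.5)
Your proposal is correct and follows the paper's argument: you iterate the drift condition \eqref{eq:assume.pw}, integrated against the relaxed action, to get the geometric moment bound, and then bound the integrated reward via \eqref{eq:assume.fgrow} and apply that moment bound from the starting pair $(t+1,j)$. Your extra remarks (Tonelli for the interchange, reading $\gamma$ as $\gamma_w$, and the convention for conditioning on $\{X^\pi_{t+1}=j\}$) only spell out what the paper leaves implicit.
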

\begin{proof}
Note that $\gamma_w \in(0,1)$ and $w(i)\geq 1$, by applying \eqref{eq:assume.pw} in Assumption \ref{assum:lyapunov.p} $s-t$ times we have
\begin{align*}
\E [w(X^\pi_{s}) \mid  X^\pi_t =i]  
 &\leq \gamma^{s-t} w(i) + b_w\sum_{k=0}^{s-t-1} \gamma_w^k\leq  \gamma_w^{s-t} w(i) + \frac{b_w}{1-\gamma_w},
\end{align*}
Consequently,
$\frac{\mathbb{E}[w(X^\pi_{s}) | X^\pi_t =i]}{w(i)} \leq \gamma_w^{s-t} + \frac{b_w}{(1-\gamma_w)w(i)} \leq \gamma_w^{s-t} +  \frac{b_w}{1-\gamma_w}.$
By \eqref{eq:assume.fgrow}, for any $0\leq t<s$ and $(i, j) \in \Sc^2$, 
$$|f^{\pi(s,j)}(s,j;t,i)| = \left| \int_{A} f(s,j,a;t,i) \pi(s,j)(da) \right| \leq C_{f} \delta(s-t) [w(i)+w(j)].$$
Then the expectation of the reward part is estimated as
\begin{equation}\label{3.13}
  \begin{aligned}
\mathbb{E}\left[ f^{\pi(s, X^\pi_{s})}(s,X^\pi_{s}; t, i) | X^\pi_{t+1}=j\right] \leq &C_{f} \delta(s-t) \big[w(i)+\mathbb{E}[w(X^\pi_{s}) |X^\pi_{t+1} =j] \big] \\
\leq & C_{f} \delta(s-t) \left[  w(i)+\gamma_w^{s-t-1}w(j)+\frac{b_w}{1-\gamma_w} \right]. 
  \end{aligned}
\end{equation}
\end{proof}

Based on the above preparation, we are now ready to show $\Psi$ maps from $\Kc(M)$ to $\Kc(M)$ for $M$ sufficiently large.

\begin{Lemma}[Self-mapping of $\Psi_\lambda$]\label{lm:self-mapping}
Fix an arbitrary $\lambda\in(0,\infty)$, there exists a finite constant $M^*_\lambda>0$ dependent on $\lambda$ such that for any $M\in(M^*_\lambda,\infty)$,
$$
\Psi_\lambda(\Kc(M))\subseteq \Kc(M).
$$
And, if $v\in \Kc(M)$ for some $M\geq M^*_\lambda$ is a fixed point of $\Psi_\lambda$, then $v$ belongs to $\Kc(M^*_\lambda)$.

Moreover, there exists a finite constant $M^*>0$ such that the above statement holds uniformly over all $\lambda\in (0,1]$.
\end{Lemma}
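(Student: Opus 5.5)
Take $v\in\Kc(M)$ and put $\pi:=\Gamma_\lambda(v)\in\Pi_r$. We bound $|\Psi_\lambda(v)(j;t,i)|=|V^\pi_\lambda(j;t,i)|$ directly from the definition \eqref{eq:def.V} by splitting it into a reward part and an entropy part. The aim is an affine-in-$\ln(1+M)$ bound of the form
\begin{equation*}
|V^\pi_\lambda(j;t,i)|\le \big(a_0 + a_1|\ln\lambda| + a_2\ln(1+M)\big)\big(w(i)+w(j)\big),
\end{equation*}
with $a_0,a_1,a_2$ independent of $M,\lambda,(t,i,j)$.

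\emph{Reward part.} Sum \eqref{eq:lm0.estfX} of Lemma~\ref{lm:est.0} over $s\ge t+1$. Using $\sum_m\delta(m)<\infty$, $\gamma_w<1$ and $w\ge1$, this gives
\[
C_f\Big(\sum_{m\ge0}\delta(m)\Big)\Big(1+\tfrac{b_w}{1-\gamma_w}\Big)\big(w(i)+w(j)\big).
\]
The bound is independent of $M$, $\lambda$ and $\pi$.

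\emph{Entropy part.} Lemma~\ref{lm:entropy.est} applies to $\pi(k,X_k)=\Gamma_\lambda(k,v(\cdot;k,X_k),X_k)$, because $v(\cdot;k,X_k)$ is again the $(k,X_k)$-coordinate of an element of $\Kc(M)$. Hence
\[
|\Hc(\pi(k,X_k))|\le C_0+C_1|\ln\lambda|+C_2\ln(1+M)+C_3w(X_k).
\]
Taking conditional expectations given $X_{t+1}=j$, iterating \eqref{eq:assume.pw} as in Lemma~\ref{lm:est.0}, and multiplying by $\lambda\widetilde\delta(k-t)$ with $\sum\widetilde\delta<\infty$ bounds the entropy term by
\[
\lambda\,\widetilde C\Big[C_0+C_1|\ln\lambda|+C_2\ln(1+M)+C_3\big(w(j)+\tfrac{b_w}{1-\gamma_w}\big)\Big].
\]
The same estimate gives absolute convergence, so $V^\pi_\lambda$ is well defined. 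Each coordinate has $W$-norm finite because $w/W$ is bounded, so $\Psi_\lambda(v)\in\Xc$. Adding the two parts yields the displayed bound.

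\emph{Choosing $M^*_\lambda$.} We need $a_0+\lambda a_1|\ln\lambda|+\lambda a_2\ln(1+M)\le M$. Since $\ln(1+M)$ grows sublinearly, this holds for all $M$ larger than some finite $M^*_\lambda$, which gives $\Psi_\lambda(\Kc(M))\subseteq\Kc(M)$.

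\emph{Uniformity over $\lambda\in(0,1]$.} We use $\lambda|\ln\lambda|\le e^{-1}$ and $\lambda\le1$. The requirement then reduces to $a_0'+a_2\ln(1+M)\le M$, which is independent of $\lambda$. So a single $M^*$ works for all $\lambda\in(0,1]$.

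\emph{Fixed points.} Let $v=\Psi_\lambda(v)$ with $v\in\Kc(M)$ for some $M\ge M^*_\lambda$. Applying the bound once gives $v\in\Kc(g(M))$, where $g(M):=c_0+c_2\ln(1+M)$. Iterating, $v\in\Kc(g^{(n)}(M))$ for every $n$. The map $g$ is increasing and concave, and $g(x)<x$ for $x>M^*_\lambda$ once $M^*_\lambda$ is taken to be the largest root of $g(x)=x$. Therefore $g^{(n)}(M)$ decreases to a fixed point of $g$ that is at most $M^*_\lambda$. Since each $\Kc(\cdot)$ is defined by closed pointwise inequalities, $v\in\Kc(M^*_\lambda)$.

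The main obstacle is the entropy term. Its logarithmic dependence on $M$ (through $L_{M,i}$ containing $MW(i)$) must not destroy self-mapping. This is exactly why Lemma~\ref{lm:entropy.est} was arranged so that $\ln W(i)\le C_Ww(i)$ turns the $W$ factor into linear $w$-growth and leaves only $\ln(1+M)$. I must also check that conditioning on $X_{t+1}=j$ gives $w(j)$-growth through iterating \eqref{eq:assume.pw} from time $t+1$.
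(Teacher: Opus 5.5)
Your proposal is correct and follows essentially the same route as the paper: you split $V^\pi_\lambda$ into a reward part and an entropy part, bound them with Lemma~\ref{lm:est.0} and Lemma~\ref{lm:entropy.est}, choose $M^*_\lambda$ using the sublinear dependence on $M$ (only through $\lambda\ln(1+M)$), and get uniformity over $\lambda\in(0,1]$ from $\lambda|\ln\lambda|\le e^{-1}$. Your fixed-point step, iterating $v\in\Kc(g^{(n)}(M))$ with $g^{(n)}(M)\downarrow M^*_\lambda$, makes precise what the paper only asserts from $\theta(M)<M$, which on its own gives just $v\in\Kc(\theta(M))$; note that your $a_0$ must also absorb the $\lambda C_0$ and $\lambda C_3$ contributions, which is harmless for fixed $\lambda$ and bounded for $\lambda\le 1$.
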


\begin{proof}
Let $M\in(0,\infty)$ and take $v(j;t,i) \in \Kc(M)$ and set $\pi = \Gamma_{\lambda}(v)$. From the expression of the value function we have
\begin{equation}\label{3.14}
|\Psi_{\lambda}(v)(j; t, i)| \leq \mathbb{E}\left[ \sum_{k=t+1}^{\infty} \left( |f^{\pi(k, X^\pi_{k})}(k, X_{k}^{\pi}; t, i)| + |\lambda \delta(k-t) \mathcal{H}(\pi(k, X^\pi_{k}))| \right) \bigg| X^\pi_{t+1}= j\right] 
\end{equation}
By Lemma~\ref{lm:entropy.est}, \eqref{eq:lm0.estWX} in Lemma \ref{lm:est.0} and that $\ln(x)\leq x$ on $[1,\infty)$,
\begin{equation}\label{3.15}
\begin{aligned}
\E[|\mathcal{H}(\pi(k,X^\pi_k))| \mid X^\pi_{t+1}=j]\leq &C_0 +C_1|\ln(\lambda)| +C_2\ln(1+M) +C_3 \ln \E[w(X^\pi_k) \mid X^\pi_{t+1}=j]\\
\leq & C_0 +C_1|\ln(\lambda)| +C_2\ln(1+M) +C_3 \left( \gamma_w^{k-t-1}w(j)+\frac{b_w}{1-\gamma_w}\right)
\end{aligned}
\end{equation}
By \eqref{eq:lm0.estfX}, we have for each $k=t+1, t+2,\cdots$ that
\begin{equation}\label{3.16}
\E\left[ f^{\pi(k,X_{k})}(k, X_{k}; t, i) | X^\pi_{t+1}=j \right] \leq C_{f} \delta(k-t) \left( w(i)+ \gamma_w^{k-t-1}w(j) +\frac{b_w}{1-\gamma_w} \right).
\end{equation}
By plugging  \eqref{3.15} and \eqref{3.16} into \eqref{3.14} and noticing $w\geq 1$, we have
\begin{align*}\label{3.17}
&|\Psi_{\lambda}(v)(j;t, i)| \\
&\leq \sum_{k=t+1}^{\infty} C_{f} \delta(k-t) \left( w(i)+ \gamma_w^{k-t}w(j) +\frac{b_w}{1-\gamma_w} \right)\\
&\quad+\sum_{k=t+1}^\infty \lambda \widetilde\delta(k-t) \left[ C_0 +C_1|\ln(\lambda)| +C_2\ln(1+M) +C_3 \left( \gamma_w^{k-t-1}w(j)+\frac{b_w}{1-\gamma_w}\right)\right]\\
&\leq \sum_{k=1}^{\infty} (\delta(k)+ \widetilde\delta(k)) \bigg[(C_{f}+\lambda C_3)\frac{b_w}{1-\gamma_w} +\lambda C_0+C_1\lambda|\ln(\lambda)| \bigg]+\lambda C_2 \sum_{k=1}^{\infty} \widetilde\delta(k) \ln(1+M)\\
&\quad + C_f \sum_{k=1}^\infty\delta(k) w(i) +( C_f +\lambda C_3)\left[\sum_{k=0}^\infty[\delta(k)+\widetilde\delta(k)]\gamma_w^k\right] w(j)\\
&\leq \widetilde C\big(1+ \lambda+\lambda |\ln\lambda| \big)\ln(2+M)\big[w(i)+w(j) \big],
\end{align*}
where the last inequality follows from the assumptions that $\sum_{k=0}^\infty ((\delta(k)+\tilde \delta(k))<\infty$ and $\gamma_w<1$, and $\widetilde C$ is a finite constant independence of $\lambda \in (0,1)$, $i, j,$ and $M$. The function $\theta(M): (0,\infty) \to \R$ defined by
\begin{equation}\label{3.18}
\theta(M):=\widetilde C\big(1+ \lambda+\lambda |\ln\lambda| \big)\ln(2+M) 
\end{equation}
is of sublinear growth. So there exist  a finite positive constant $M^*_\lambda$ such that 
\be\label{eq:theta.M} 
\theta(M^*_\lambda) = M^*_\lambda, \quad\text{and }\theta(M)< M \; \; \forall M  \in(M^*_\lambda,\infty).
\ee 
That is, for any $M\geq M^*_\lambda$,
$$
|\Psi_{\lambda}(v)(j;t, i)|\leq M (w(i)+w(j))\quad \forall (j,t,i)\in \Sc\times\N_0\times \Sc, \text{ provided that } v\in \Kc(M).
$$ 
Also, it is clear from the above inequality that $\|\Psi_{\lambda}(v)(\cdot ;t, i)\|_{\wt}<\infty$ for each $(t,i)\in \Sc\times \N_0$. Thus, $\Psi_{\lambda}(v)\in \Xc$. 
Therefore, $\Psi(\Kc(M))\subset \Kc(M)$, for any $M\in[M^*_\lambda,\infty)$.  Suppose $v\in \Kc(M)$ for some $M\geq M^*_\lambda$ is a fixed point of $\Psi_\lambda$. Then $\theta(M)<M$ in \eqref{eq:theta.M} also implies that $\Psi_\lambda(v)\in \Kc(M^*_\lambda)$.

Moreover, by $\sup_{\lambda\in(0,1]}\lambda|\ln\lambda| =1/e$, the fixed point of $\theta$ in \eqref{eq:theta.M} can be chosen as a finite constant $M^*$ uniformly over all $\lambda\in(0,1]$.
\end{proof}

\begin{Lemma}[Continuity of $\Psi_\lambda$]\label{lm:Psi.continuous}
Fix $\lambda>0$ and let $M\geq M^*_\lambda$, where $M^*_\lambda$ is given by Lemma~\ref{lm:self-mapping}. Then the mapping
$
\Psi_\lambda:\Kc(M)\to  \Kc(M)
$
is continuous under the topology inherited from $\Xc$. More precisely, if $v^n,v\in \Kc(M)$ and $v^n\to v$ in $\Xc$, then, for every $(t,i)\in\N_0\times\Sc$,
$$
\big\|
\Psi_\lambda(v^n)(\cdot;t,i)-\Psi_\lambda(v)(\cdot;t,i)\big\|_{\wt}\to 0.
$$
\end{Lemma}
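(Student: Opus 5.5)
Fix $(t,i)$ and $\eps>0$. The plan is to turn the weighted-norm convergence into finitely many pointwise statements through a tail cut, and then handle each of those by truncating in time and in space.

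\emph{Step 1: tail cut in $j$.} Lemma~\ref{lm:self-mapping} gives $\Psi_\lambda(v^n),\Psi_\lambda(v)\in\Kc(M)$. By the uniform tail estimate \eqref{eq:KM.uniform.tail} from the proof of Lemma~\ref{lm:KM.compact}, there is $R$ with
$$\sup_{j>R}\frac{|\Psi_\lambda(v^n)(j;t,i)-\Psi_\lambda(v)(j;t,i)|}{W(j)}<\eps \quad \text{for all } n.$$
It therefore suffices to prove $\Psi_\lambda(v^n)(j;t,i)\to\Psi_\lambda(v)(j;t,i)$ for each fixed $j\le R$.

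\emph{Step 2: pointwise convergence of the Gibbs policies.} Set $\pi^n:=\Gamma_\lambda(v^n)$ and $\pi:=\Gamma_\lambda(v)$. For every $(s,k)$ we have
$$|p(s,k,a)\cdot(v^n-v)(\cdot;s,k)|\le(\gamma_WW(k)+b_W)\,\|v^n(\cdot;s,k)-v(\cdot;s,k)\|_{\wt}\to0,$$
uniformly in $a\in A$. Hence the exponents $H(s,k,v^n(\cdot;s,k),a)/\lambda$ converge uniformly on $A$, and so do the normalizing constants. Consequently $\pi^n(s,k)\to\pi(s,k)$ uniformly on $A$ as densities, which makes both of the following hold:
\begin{itemize}
\item $\Hc(\pi^n(s,k))\to\Hc(\pi(s,k))$, because the densities are bounded above and below uniformly in $n$ for fixed $(s,k)$;
\item $f^{\pi^n(s,k)}(s,k;t,i)\to f^{\pi(s,k)}(s,k;t,i)$ and $p^{\pi^n}(s,k,m)\to p^\pi(s,k,m)$ for every $m$.
\end{itemize}

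\emph{Step 3: truncation in time.} Write $\Psi_\lambda(v^n)(j;t,i)$ as a series over $k\ge t+1$. By \eqref{3.15}--\eqref{3.16}, the summands are bounded uniformly in $n$ by
$$\bigl(\delta(k-t)+\lambda\widetilde\delta(k-t)\bigr)\cdot C(M,\lambda)\bigl(w(i)+w(j)+1\bigr),$$
which is summable. So the terms with $k>T$ are below $\eps$ uniformly in $n$, and only finitely many time steps $k\le T$ remain.

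\emph{Step 4: convergence of finite-step laws (main obstacle).} For each $k\le T$ I must show
$$\E\bigl[g_n(X^{\pi^n}_k)\mid X_{t+1}=j\bigr]\to\E\bigl[g(X^{\pi}_k)\mid X_{t+1}=j\bigr],$$
where $g_n(m)=f^{\pi^n(k,m)}(k,m;t,i)+\lambda\widetilde\delta(k-t)\Hc(\pi^n(k,m))$. The difficulty is that the state space is countable, so pointwise convergence of the transition matrices does not directly pass to the expectations.
\begin{itemize}
\item First I would show by induction on $k$ that the laws $\mu^n_k$ of $X^{\pi^n}_k$ converge to $\mu_k$ pointwise on $\Sc$. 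At each step, $\mu^n_{k+1}(m)=\sum_l\mu^n_k(l)p^{\pi^n}(k,l,m)$; the terms with large $l$ are controlled by the uniform moment bound $\E[w(X_k)]\le C$ from Lemma~\ref{lm:est.0} together with Markov's inequality, since $w\to\infty$. Because $\mu^n_k,\mu_k$ are probability measures, Scheffé's lemma then upgrades pointwise convergence to total variation convergence.
\item Next I split $\sum_m\mu^n_k(m)g_n(m)$ into $m\le R'$ and $m>R'$. On $m\le R'$ everything converges pointwise by Step 2. For $m>R'$, $|g_n(m)|\le C(1+w(m))$ by Lemma~\ref{lm:entropy.est} and \eqref{eq:assume.fgrow}. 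The key point is that $w$-growth alone is not uniformly integrable, so I would use the $W$-moment bound from \eqref{eq:assume.pW}, namely $\sup_n\E[W(X_k)]\le C$, together with $w/W\to0$. This gives
$$\sup_n\sum_{m>R'}\mu^n_k(m)(1+w(m))\le C\sup_{m>R'}\frac{1+w(m)}{W(m)}\to0 \quad\text{as } R'\to\infty.$$
\end{itemize}
Combining Steps 1--4 yields $\|\Psi_\lambda(v^n)(\cdot;t,i)-\Psi_\lambda(v)(\cdot;t,i)\|_{\wt}\le C\eps$ for all large $n$, and since $\eps$ was arbitrary, the convergence claimed in Lemma~\ref{lm:Psi.continuous} follows.
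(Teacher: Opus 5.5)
Your proposal is correct and follows essentially the same route as the paper: uniform-on-$A$ convergence of the Gibbs densities at each node, an induction for the finite-step laws with tails controlled by Lyapunov moment bounds, uniform integrability from the $W$-moment together with $w/W\to0$, truncation in time, and the $\Kc(M)$ tail cut in $j$. The differences are cosmetic. You do the $j$-tail cut first rather than last, and you bound the induction tail with the $w$-moment and Markov's inequality where the paper uses the $W$-moment. The Scheffé upgrade to total variation is harmless but unnecessary, since your subsequent split into $m\le R'$ and $m>R'$ only uses pointwise convergence on finitely many states.
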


\begin{proof}
Let $(v^n)_{n\geq1}\subset \Kc(M)$ and $v\in \Kc(M)$ satisfy that $v^n\to v$ in the product topology of $\Xc$. That is,
\be\label{eq:product.conv.coordinate}
\|v^n(\cdot;t,i)-v(\cdot; t, i)\|_{\wt}\to 0,\quad \forall (t, i)\in\N_0\times\Sc.
\ee
Define $\pi^n:=\Gamma_\lambda(v^n)$ and $\pi:=\Gamma_\lambda(v)$. We divide the proof into five steps.

\textbf{Step 1.} We first prove the convergence of $\pi^n$ to $\pi$. Recall \eqref{eq:def.H}. Fix an arbitrary $(t,i)\in\N_0\times\Sc$, 
By \eqref{eq:assume.pW} in Assumption \ref{assum:lyapunov.p},
$$
\begin{aligned}
&\sup_{a\in A}|H(t, i, v^n(\cdot; t, i), a)-H(t, i, v(\cdot; t, i), a)|\\
&\leq \|v^n(\cdot;t,i)-v(\cdot;t,i)\|_{\wt} \sup_{a\in A}\sum_{j\in\Sc}p(t,i,j,a)W(j)
\leq \big(\gamma_{W}W(i)+b_{W}\big)\|v^n(\cdot;t,i)-v(\cdot;t,i)\|_{\wt}.
\end{aligned}
$$
It then follows from \eqref{eq:product.conv.coordinate} that
\be\label{eq:F.uniform.convergence}
\|H(t, i, v^n(\cdot; t, i), a)-H(t, i, v(\cdot; t, i), a)\|_{L^\infty(A)}\to 0.
\ee
The above uniform convergence on the compact set $A$ also implies the uniform boundedness of $(H(t, i, v^n(\cdot; t, i), a))_{n\in \N}$ and  $H(t, i, v(\cdot; t, i), a)$ on $A$, which is independent of $t, i$. Note that 
$$
\pi^n(t,i,a)=\frac{\exp(H(t, i, v^n(\cdot; t, i), a)/\lambda)}{\int_A\exp(H(t, i, v^n(\cdot; t, i), a')/\lambda)\,da'},\quad
\pi(t,i,a)=\frac{\exp(H(t, i, v(\cdot; t, i), a)/\lambda)}{\int_A\exp(H(t, i, v(\cdot; t, i), a')/\lambda)\,da'},
$$
Thus, there exist constants
$0<c_{t,i}\leq C_{t,i}<\infty$, independent of $n$, such that
$$
c_{t,i} \leq \pi^n(t,i,a),\,\pi(t,i,a)\leq C_{t,i},\quad\forall a\in A \text{ and } n\geq1.
$$
And the uniform convergence in \eqref{eq:F.uniform.convergence} further implies
\be\label{eq:gibbs.uniform.convergence}
\sup_{a\in A}|\pi^n(t,i,a)-\pi(t,i,a)|\to 0, \text{ and hence, }
\|\pi^n(t,i)-\pi(t,i)\|_{\mathrm{TV}}\to 0.
\ee 
The function $x\mapsto x\ln x$ is uniformly continuous on
$[c_{s,i},C_{s,i}]$. Hence \eqref{eq:gibbs.uniform.convergence} also yields
\begin{equation}\label{eq:entropy.node.convergence}
|\Hc(\pi^n(t,i))-\Hc(\pi(t,i))|\to 0.
\end{equation}
Moreover, by Assumption \ref{assum:lipschitz}, \eqref{eq:gibbs.uniform.convergence} also gives, for every $(t,s)\in \Delta$ and every $i,j\in \Sc$ that
\begin{align}
p^{\pi^n}(t,i,j)&=\int_A p(t,i,j,a)\pi^n(t,i,a)\,da \to p^\pi(t,i,j),\label{eq:transition.node.convergence}\\  f^{\pi^n(s,i)}(s,j;t,i)&=  \int_A f(s,j,a;t,i)\pi^n(s,j,a)\,da \to f^{\pi(s,j)}(s,j;t,i). \label{eq:reward.node.convergence}
\end{align}

\medskip
\noindent
\textbf{Step 2.} We next show the convergence of multi-step transition probabilities. Fix an arbitrary $j\in \Sc$. For $m\geq1$ and $r\in\Sc$, set
$$
q^n_{m}(r):=\P\big(X_{t+m}^{\pi^n}=r\,\big|\,X_{t+1}^{\pi^n}=j\big),
\quad
q_m(r):=\P\big(X_{t+m}^{\pi}=r\,\big|\,X_{t+1}^{\pi}=j\big).
$$
We first verify by induction that
\be\label{eq:lmcontinue.step2.1}
q^n_{m}(r)\to  q_m(r),\quad
\forall r\in\Sc,\; \forall m\geq1.
\ee
\eqref{eq:lmcontinue.step2.1} holds immediately for $m=1$. Suppose that the assertion holds for
some $m\geq1$. By the Markov property,
$$
q^n_{m+1}(r)=\sum_{l\in\Sc}q^n_{m}(l)p^{\pi^n}(t+m,l,r),\quad 
q_{m+1}(r)=\sum_{l\in\Sc}q_m(l)p^\pi(t+m,l,r).
$$
Fix $R\in\N$. Splitting the state sum gives that
$$
\begin{aligned}
|q^n_{m+1}(r)-q_{m+1}(r)|&\leq\sum_{l\leq R}  \left|  q^n_{m}(l)  p^{\pi^n}(t+m,l,r)  -  q_m(l)  p^\pi(t+m,l,r)  \right|\\
&\quad+  \sum_{l>R}q^n_{m}(l)  p^{\pi^n}(t+m,l,r)+  \sum_{l>R}q_m(l)p^\pi(t+m,l,r).
\end{aligned}
$$
For each fixed $R$, the first term on the right-hand side converges to zero by the induction
hypothesis and \eqref{eq:transition.node.convergence}. Since
$p^{\pi^n}(t+m,l,r)\leq1$ and
$p^\pi(t+m,l,r)\leq1$, the two tail terms containing summations over $l>R$ are bounded by
$
\sum_{l>R}q^n_{m}(l)$ \text{and} $\sum_{l>R}q_m(l),
$
respectively. By iterating \eqref{eq:assume.pW},
\be\label{eq:lmcontinue.step2.3}
\sup_{n\geq1}\E\left[W(X_{t+m}^{\pi^n})\,|\,X_{t+1}^{\pi^n}=j\right]\leq\gamma_{W}^{m-1}W(j)
+\frac{b_{W}}{1-\gamma_{W}}.
\ee
The same estimate holds with $\pi^n$ replaced by $\pi$. Since
$W(l)\to\infty$ as $l\to\infty$,
$$
\begin{aligned}
\sup_{n\geq1}\sum_{l>R}q^n_{m}(l)&\leq\frac{1}{\inf_{l>R}W(l)}\sup_{n\geq1}\sum_{l>R}q^n_{m}(l)W(l)
&\leq\frac{  \gamma_{W}^{m-1}W(j)+  \frac{b_{W}}{1-\gamma_{W}}  }{  \inf_{l>R}W(l)  }  \to 0, \text{ as $R\to\infty$. }
\end{aligned}
$$
The same conclusion holds for
$\sum_{l>R}q_m(l)$. Hence, letting first $n\to\infty$ and then $R\to\infty$ proves the induction step, and \eqref{eq:lmcontinue.step2.1} is verified.

\medskip
\noindent
\textbf{Step 3.}
We next establish the uniform integrability needed to pass to the
limit in the reward and entropy expectations. From
\eqref{eq:lmcontinue.step2.3} and the first line of
\eqref{eq: assum.wW},
\begin{align}
&\sup_{n\geq1}\E\left[w(X_{t+m}^{\pi^n})\mathbf 1_{\{X_{t+m}^{\pi^n}>R\}}  \,\middle|\,  X_{t+1}^{\pi^n}=j  \right]  
\leq   \sup_{r>R}\frac{w(r)}{W(r)}\sup_{n\geq1}  \E\left[  W(X_{t+m}^{\pi^n})  \mathbf 1_{\{X_{t+m}^{\pi^n}>R\}}  \,\middle|\,  X_{t+1}^{\pi^n}=j  \right]  \nonumber\\
& \leq  \sup_{r>R}\frac{w(r)}{W(r)}  \left(  \gamma_{W}^{m-1}W(j)  +  \frac{b_{W}}{1-\gamma_{W}}  \right)  \to 0, \; \text{ as }R\to\infty.  \label{eq:w.uniform.integrability}
\end{align}
A similar conclusion also holds with $\pi^n$ replaced by $\pi$.

Since $v^n,v\in\Kc(M)$, Lemma~\ref{lm:entropy.est} yields a
constant $C_{\lambda,M}>0$ exists 
such that
\begin{equation}\label{eq:entropy.growth.uniform}
 |\Hc(\pi^n(s,r))| + |\Hc(\pi(s,r))|  \leq  C_{\lambda,M}w(r),\quad \forall n\in \N_0, s\in\N_0, r\in \Sc.
\end{equation}
Similarly, Assumption~\ref{assum:reward-bound} gives
\be\label{eq:reward.growth.uniform}
  \left|
  f^{\pi^n(s,r)}(s,r;t,i)
  \right|
  +
  \left|
  f^{\pi(s,r)}(s,r;t,i)
  \right|
  \leq
  2C_f\delta(s-t)\big(w(i)+w(r)\big).
\ee
Define
$$
g^n_{m}(r):=f^{\pi^n(t+m,r)}(t+m,r;t,i),\qquad
g_m(r):=f^{\pi(t+m,r)}(t+m,r;t,i).
$$
By \eqref{eq:reward.node.convergence},
$
g^n_{m}(r)\to  g_m(r)
$, for all $r\in \Sc$.
Moreover,
$$
\begin{aligned}
&\E\left[  f^{\pi^n(t+m,X_{t+m}^{\pi^n})}  (t+m,X_{t+m}^{\pi^n};t,i)  \,\middle|\,  X_{t+1}^{\pi^n}=j  \right] =  \sum_{r\in\Sc}q^n_{m}(r)g^n_{m}(r),
\end{aligned}
$$
and the analogous identity holds for $\pi$. For $R\in\N$, we have
\be\label{eq:reward.step3} 
\begin{aligned}
&\left|  \sum_{r\in\Sc}q^n_{m}(r)g^n_{m}(r)  -  \sum_{r\in\Sc}q_m(r)g_m(r)  \right|  \\
&\leq  \sum_{r\leq R}  \left|  q^n_{m}(r)g^n_{m}(r)  -  q_m(r)g_m(r)  \right|
+  \sum_{r>R}q^n_{m}(r)|g^n_{m}(r)|  +  \sum_{r>R}q_m(r)|g_m(r)|.
\end{aligned}
\ee
For fixed $R$, \eqref{eq:lmcontinue.step2.1} and
\eqref{eq:reward.node.convergence} together tells the first term on the right-hand side of \eqref{eq:reward.step3} converges to zero as $n\to\infty$.
 By \eqref{eq:reward.growth.uniform} and $w(r)\geq1$,
$$
\begin{aligned}
\sum_{r>R}q^n_{m}(r)|g^n_{m}(r)|&\leq  C_f\delta(m)  \sum_{r>R}q^n_{m}(r)\big(w(i)+w(r)\big)  \\
  &\leq C_f\delta(m)(1+w(i))  \sum_{r>R}q^n_{m}(r)w(r) \to 0, \; \text{uniformly in $n$,  as }R\to \infty,
\end{aligned}
$$
where the last line above follows from \eqref{eq:w.uniform.integrability}.  The third term on the right-hand side of \eqref{eq:reward.step3} is
treated in exactly the same way. We therefore obtain from \eqref{eq:reward.step3} that
\be\label{eq:fconverge.step3} 
\begin{aligned}
&\E\left[  f^{\pi^n(t+m,X_{t+m}^{\pi^n})}  (t+m,X_{t+m}^{\pi^n};t,i)  \,\middle|\,  X_{t+1}^{\pi^n}=j  \right]  \\
&\to   \E\left[  f^{\pi(t+m,X_{t+m}^{\pi})}  (t+m,X_{t+m}^{\pi};t,i)  \,\middle|\,  X_{t+1}^{\pi}=j  \right],\quad \text{as $n\to\infty$}.
\end{aligned}
\ee 

We proceed similarly for the entropy term. Set
$$
h^n_{m}(r):=\Hc(\pi^n(t+m,r)),\quad 
h_m(r):=\Hc(\pi(t+m,r)).
$$
By \eqref{eq:entropy.node.convergence},
$
h^n_{m}(r)\to  h_m(r),
$
for all $r\in \Sc$. By applying \eqref{eq:entropy.growth.uniform}, we have that
$$
\begin{aligned}
&  \left|  \sum_{r\in\Sc}q^n_{m}(r)h^n_{m}(r)  -  \sum_{r\in\Sc}q_m(r)h_m(r)  \right|  \\
&\leq  \sum_{r\leq R}  \left|  q^n_{m}(r)h^n_{m}(r)  -  q_m(r)h_m(r)  \right|  +  C_{\lambda,M}  \sum_{r>R}q^n_{m}(r)w(r)
+  C_{\lambda,M}  \sum_{r>R}q_m(r)w(r).
\end{aligned}
$$
For fixed $R$, the summation in the second line above converges to zero as $n\to\infty$. Hence, \eqref{eq:w.uniform.integrability} tells that the first tail term in the third line above converges to zero as $R\to\infty$, so does the second tail term. In sum,
\be\label{eq:Hconverge.step3}  
\begin{aligned}
&\E\left[  \Hc(\pi^n(t+m,X_{t+m}^{\pi^n}))  \middle|  X_{t+1}^{\pi^n}=j  \right]\to   \E\left[  \Hc(\pi(t+m,X_{t+m}^{\pi}))  \middle|  X_{t+1}^{\pi}=j  \right], \; \text{as $n\to\infty$}.
\end{aligned}
\ee 

\medskip
\noindent
\textbf{Step 4.}
We now prove the pointwise convergence of $\Psi_\lambda(v^n)(j;t,i)$ to $\Psi_\lambda(v)(j;t,i)$. Fix $(t,i,j)\in\N_0\times\Sc^2$. For $N\in\N$, define
$$
\begin{aligned}
V_{\lambda,N}^{\pi^n}(j;t,i)  :=\E\Bigg[  \sum_{k=t+1}^{t+N}  \Big(  &f^{\pi^n(k,X_k^{\pi^n})}  (k,X_k^{\pi^n};t,i)+  \lambda\widetilde\delta(k-t)  \Hc(\pi^n(k,X_k^{\pi^n}))  \Big)  \,\Bigm|\,  X_{t+1}^{\pi^n}=j  \Bigg],
\end{aligned}
$$
and define $V_{\lambda,N}^{\pi}(j;t,i)$ in a similar way. Then for every fixed $N\in\N$, summing the convergences in \eqref{eq:fconverge.step3} and $\lambda\delta(m)$ times the entropy convergence in \eqref{eq:Hconverge.step3}  over
$m=1,\ldots,N$ gives
\be\label{eq:lmcontinue.step2.0}
\lim_{n\to\infty}V_{\lambda,N}^{\pi^n}(j;t,i) = V_{\lambda,N}^{\pi}(j;t,i).
\ee

We next pass from the truncated values to the infinite-horizon values. By Lemmas \ref{lm:entropy.est} and \ref{lm:est.0}, there exists a constant $\widetilde C_{\lambda,M}>0$, independent of $n,t,i,j,N$, such that
\begin{align}
&\left|  V_\lambda^{\pi^n}(j;t,i)  -  V_{\lambda,N}^{\pi^n}(j;t,i)  \right| \notag\\
&\leq  \widetilde C_{\lambda,M}  \sum_{m=N+1}^{\infty} ( \delta(m)+\widetilde\delta(m))  \left(  1+w(i)+\gamma_w^{m-1}w(j)  +\frac{b_w}{1-\gamma_w}  \right).  \label{eq:value.temporal.tail}
\end{align}
The same estimate holds again with $\pi^n$ replaced by $\pi$. Since
$
\sum_{m=0}^{\infty}[\delta(m)+\widetilde\delta(m)]<\infty
$
and $\gamma_w\in(0,1)$, the right-hand side of \eqref{eq:value.temporal.tail} converges to zero as $N\to\infty$, for every fixed $(t,i,j)$, uniformly in $n$. Combining this with \eqref{eq:lmcontinue.step2.0} tells that
\be\label{eq:value.pointwise.convergence}
V_\lambda^{\pi^n}(j;t,i)  \to   V_\lambda^\pi(j;t,i),  \quad  \forall(t,i,j)\in\N_0\times\Sc^2.
\ee

\medskip
\noindent
\textbf{Step 5.} Fix an arbitrary $(t,i)\in\N_0\times\Sc$. Now we wrap up the proof by showing 
\be\label{eq:step3}
\|\Psi_\lambda(v^n)(\cdot; t, i)- \Psi_\lambda(v)(\cdot; t, i)\|_{\wt}\to 0.
\ee
By Lemma \ref{lm:self-mapping},
$
\Psi_\lambda(v^n),\Psi_\lambda(v)\in \Kc(M).
$
Therefore,
\begin{align}
\frac{|\Psi_\lambda(v^n)(j;t,i)-\Psi_\lambda(v)(j;t,i)|}{W(j)}\leq 2M\left(\frac{w(i)}{W(j)} +\frac{w(j)}{W(j)}  \right).  \label{eq:uniform.spatial.tail}
\end{align}
It then follows from \eqref{eq: assum.wW} that the right-hand side of
\eqref{eq:uniform.spatial.tail} converges to zero as $j\to\infty$, uniformly in $n$. Hence, for every $\eps>0$, there exists a finite set $F_{\eps, t, i}\subset\Sc$ such that
$$
\sup_{j\notin F_{\eps, t, i}}\frac{|\Psi_\lambda(v^n)(j;t,i)-  \Psi_\lambda(v)(j;t,i)|}{W(j)}<\eps,\quad \forall n\in \N.
$$
On the finite set $F_{\eps, t, i}$, \eqref{eq:value.pointwise.convergence} implies that, for all sufficiently large $n$,
$$
\max_{j\in F_{\eps, t, i}}\frac{|\Psi_\lambda(v^n)(j;t,i) -  \Psi_\lambda(v)(j;t,i)|}{W(j)}<\varepsilon.
$$
Thus, \eqref{eq:step3} holds.
Then by the arbitrariness of $(t,i)$, $\Psi_\lambda(v^n)\to\Psi_\lambda(v)$ in the product topology of $\Xc$, which completes the proof.
\end{proof}

\begin{proof}[Proof of Theorem~\ref{thm:existence.regular}]
{\bf Part (i):} Fix $\lambda>0$. Let $M\geq M^*_\lambda$ with the constant $M^*_\lambda$ given in Lemma \ref{lm:self-mapping}. By Lemma \ref{lm:KM.compact}, the set $\Kc(M)$ is a nonempty, compact, and convex subset of $\Xc$. Since each $\Bc_{\wt}$ is a Banach space, $\Xc$, equipped with the product topology, is a Hausdorff locally convex topological vector space.
By Lemmas \ref{lm:self-mapping} and \ref{lm:Psi.continuous},$\Psi_\lambda$ maps from $\Kc(M)$ to itself, and is continuous. Therefore, the Schauder-Tychonoff fixed-point theorem implies that there exists $v^*\in \Kc(M)$ such that $$\Psi_\lambda(v^*)=v^*.$$ Define
$
\pi^*:=\Gamma_\lambda(v^*).
$
Proposition \ref{prop:fixed-points-entropy} now implies that $\pi^*$ is a regularized equilibrium for the entropy-regularized problem and $v^*=V^{\pi^*}_\lambda$.

{\bf Part (ii):} In addition, the existence of $M^*$ in Lemma \ref{lm:self-mapping} uniformly over all $\lambda\in(0,1]$ ensures  Part (ii).
\end{proof}

\section{Existence of relaxed equilibria of the original problem}\label{sec:origin}
Now, we move on to prove the existence of a relaxed equilibrium for the original MDP\eqref{1.2}. Similarly to \eqref{eq:def.V}, for any $\pi \in \Pi$, we introduce the auxiliary value function $V^\pi:\Sc\times \N_0\times \Sc$ by
\begin{equation}\label{eq:def.Vorigin}
  V^\pi(j; t, i) := \mathbb{E} \left[ \sum_{k=t+1}^{\infty} f^{\pi(k,X^\pi_k)}(k, X_k^\pi; t, i)  \bigg| X^\pi_{t+1}=j\right] 
\end{equation}
We also commonly write $V^\pi(\cdot; t, i)$ as a vector in $\R^{\infty}$. As $A$ is compact, for any $(t, y, i) \in N_0\times \Bc_{\wt} \times \Sc$,
\begin{equation}\label{4.3}
  E(t,y,i) := \argmax_{a \in A} H(t,i,y,a) \subseteq A 
\end{equation}
is nonempty and closed. For any $v\in \Xc$, consider the collection
\begin{equation}\label{eq:def.Gamma}
\Gamma(v):=\{ \pi \in \Pi : \supp(\pi(t,i)) \subseteq E(t, v(\cdot; t,i), i), \quad \forall (t,i) \in \N_0\times\Sc\}, 
\end{equation}
where $\supp(\varpi)$ denotes the support of $\varpi \in \Pc(A)$. We can then define a set-valued operator $\Psi : \Xc \to 2^{\Xc}$ by
\begin{equation}\label{eq:ef.Psi}
  \Psi(v) := \{ V^\pi : \pi \in \Gamma(v) \}. 
\end{equation}
Moreover, we can also define a set-valued operator $\Phi : \Pi \to 2^{\Pi}$ by
\begin{equation}\label{4.6}
  \Phi(\pi) := \Gamma(V^\pi) \subseteq \Pi. 
\end{equation}

\begin{Proposition}\label{prop:fixedpoint.origin}
  Assume that Assumptions~\ref{assum:reward-bound}, \ref{assum:lyapunov.p} and~\ref{assum:lipschitz} hold. Then,
  \begin{equation}\label{3.37}
 \pi \in \Pi \text{ is a relaxed equilibrium } \iff \pi \in \Phi(\pi). 
  \end{equation}
  Moreover, for any $v \in \Xc$,
  \begin{equation*}
 v = V^\pi \text{ for some relaxed equilibrium } \pi \in \Pi \iff v \in \Psi(v).
  \end{equation*}
\end{Proposition}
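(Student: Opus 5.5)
We follow the proof of Proposition~\ref{prop:fixed-points-entropy}, with the Gibbs maximizer replaced by the argmax set $E$. Fix $\pi\in\Pi$ and $(t,i)\in\N_0\times\Sc$.

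\textbf{Step 1: one-step decomposition.} By Remark~\ref{rm:payoff.welldefined} and Lemma~\ref{lm:est.0}, $V^\pi(\cdot;t,i)$ is finite and satisfies $|V^\pi(j;t,i)|\le C(w(i)+w(j))$. Since $w/W\le C_w$ by \eqref{eq: assum.wW0}, this gives $V^\pi(\cdot;t,i)\in\Bc_{\wt}$, so $V^\pi\in\Xc$. Hence $H(t,i,V^\pi(\cdot;t,i),a)$ is well defined for every $a\in A$. Conditioning on $X_{t+1}$ and using the Markov property (absolute convergence is guaranteed by Remark~\ref{rm:payoff.welldefined}), we get for every $\varpi\in\Pc(A)$:
$$
J^{\varpi\otimes_1\pi}(t,i)=\int_A H\bigl(t,i,V^\pi(\cdot;t,i),a\bigr)\,\varpi(da).
$$
The reason is that after time $t$ the process follows $\pi$, so the continuation value given $X_{t+1}=j$ is exactly $V^\pi(j;t,i)$. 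Taking $\varpi=\pi(t,i)$ gives $J^\pi(t,i)=\int_A H(t,i,V^\pi(\cdot;t,i),a)\,\pi(t,i)(da)$.

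\textbf{Step 2: continuity of $H$ and the first equivalence.} We show $a\mapsto h(a):=H(t,i,V^\pi(\cdot;t,i),a)$ is continuous, in fact Lipschitz. The term $f(t,i,a;t,i)$ is handled by \eqref{eq:assume.lipaf}. The transition term is handled by \eqref{eq:assume.lipap} together with $\|V^\pi(\cdot;t,i)\|_{\wt}<\infty$, exactly as in \eqref{eq:entropyest.1}. So $h$ is continuous on the compact set $A$, and its maximum $h^*$ is attained on the closed set $E(t,V^\pi(\cdot;t,i),i)$.

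Condition \eqref{D2} at $(t,i)$ reads $\int h\,d\varpi\le\int h\,d\pi(t,i)$ for all $\varpi$. Choosing $\varpi=\delta_{a^*}$ with $a^*\in E$ shows \eqref{D2} is equivalent to $\int_A (h^*-h)\,d\pi(t,i)=0$. Because $h^*-h\ge0$ is continuous, this holds iff $\pi(t,i)$ puts no mass on the open set $\{h<h^*\}$, i.e.\ iff $\supp(\pi(t,i))\subseteq E$. For the "if" direction, $h=h^*$ on $E$, so $\int h\,d\pi(t,i)=h^*\ge\int h\,d\varpi$. For the "only if" direction, if some $a\in\supp(\pi(t,i))$ had $h(a)<h^*$, continuity gives a neighbourhood of $a$ with positive $\pi(t,i)$-mass on which $h<h^*$, forcing $\int(h^*-h)\,d\pi(t,i)>0$. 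Ranging over all $(t,i)$ yields $\pi\in\Gamma(V^\pi)=\Phi(\pi)$, which is \eqref{3.37}.

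\textbf{Step 3: the value-level equivalence.} Suppose $v=V^\pi$ with $\pi$ a relaxed equilibrium. Then $\pi\in\Gamma(v)$ by Step 2, so $v=V^\pi\in\Psi(v)$. Conversely, suppose $v\in\Psi(v)$. Then $v=V^\pi$ for some $\pi\in\Gamma(v)=\Gamma(V^\pi)=\Phi(\pi)$, and Step 2 shows $\pi$ is a relaxed equilibrium.

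The main obstacle is Step 1 together with the support argument in Step 2. The one-step decomposition needs the integrability and interchange of sum and expectation under unbounded rewards, which Remark~\ref{rm:payoff.welldefined} supplies. The support characterization needs continuity of $h$ in $a$, which comes from Assumption~\ref{assum:lipschitz} and the membership $V^\pi\in\Xc$.
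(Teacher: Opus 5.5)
Your proposal is correct and follows the paper's proof. Both arguments use the same one-step decomposition of $J^{\varpi\otimes_1\pi}(t,i)$ through $V^\pi$ and the Markov property, the same reduction of \eqref{D2} to the condition $\supp(\pi(t,i))\subseteq E(t,V^\pi(\cdot;t,i),i)$ by linearity in $\varpi$, and the same value-level argument. Your extra checks fill in details the paper leaves implicit: you verify that $V^\pi\in\Xc$, and you show $a\mapsto H(t,i,V^\pi(\cdot;t,i),a)$ is Lipschitz, so $E$ is closed and the support characterization is exact.
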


\begin{proof}
The proof follows a similar argument  in the proof of Proposition \ref{prop:fixed-points-entropy} (while ignoring the term $\lambda \Hc(\cdot)$ therein). Fix $\pi\in\Pi$ and $(t,i)\in\N_0\times\Sc$. For an arbitrary $\varpi\in\Pc(A)$, let $\varpi\otimes_1\pi$ denote the policy that uses $\varpi$ at the node $(t,i)$ and follows $\pi$ from time $t+1$ onward. By the definition of $V^\pi$ in \eqref{eq:def.Vorigin} and the Markov property,
$$
\begin{aligned}
 J^{\varpi\otimes_1\pi}(t,i)&= \int_A\left[f(t,i,a;t,i) + \sum_{j\in\Sc}p(t,i,j,a)V^\pi(j;t,i) \right]\varpi(da).
\end{aligned}
$$
Consequently, $\pi$ is a relaxed equilibrium if and only if, for every $(t,i)\in\N_0\times\Sc$,
$$
\pi(t,i)\in\argmax_{\varpi\in\Pc(A)}\int_A \left[f(t,i,a;t,i) + \sum_{j\in\Sc}p(t,i,j,a)V^\pi(j;t,i)\right]\varpi(da).
$$
Since the objective above is linear in $\varpi$, this is equivalent to
$$
\supp(\pi(t,i))  \subset  E\big(t,V^\pi(\cdot;t,i),i\big),  \qquad  \forall (t,i)\in\N_0\times\Sc.
$$
In view of \eqref{eq:def.Gamma} and \eqref{4.6}, this is precisely $\pi\in\Gamma(V^\pi)=\Phi(\pi)$, so \eqref{3.37} holds.

Now let $v\in\Xc$. If $v=V^\pi$ for some relaxed equilibrium $\pi\in\Pi$, then the first part gives
$
\pi\in\Gamma(V^\pi)=\Gamma(v),
$
and hence, by \eqref{eq:ef.Psi}, 
$
v=V^\pi\in\Psi(v).
$
Conversely, if $v\in\Psi(v)$, then there exists $\pi\in\Gamma(v)$ such that $v=V^\pi$. Therefore,
$
\pi\in\Gamma(v)=\Gamma(V^\pi)=\Phi(\pi),
$
and the first part implies that $\pi$ is a relaxed equilibrium.
\end{proof}
  
\begin{Theorem}\label{thm:relax.origin}
Assume that Assumptions~\ref{assum:reward-bound}--\ref{assum:cone} hold. The set-valued mapping $\Psi$ admits a fixed point. Consequently, the original problem admits a relaxed equilibrium.
\end{Theorem}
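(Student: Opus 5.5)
We prove Theorem~\ref{thm:relax.origin} by a vanishing-entropy limit. Take $\lambda_n\downarrow0$ with $\lambda_n\in(0,1]$. By Theorem~\ref{thm:existence.regular}(ii), for each $n$ there is a regularized equilibrium $\pi^n:=\Gamma_{\lambda_n}(v^n)$ with $v^n=V^{\pi^n}_{\lambda_n}\in\Kc(M^*)$ and $M^*$ independent of $n$.

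\textbf{Extracting limits.} By Lemma~\ref{lm:KM.compact}, $\Kc(M^*)$ is compact, and the product topology on $\Xc$ is metrizable (see the Remark following that lemma). So along a subsequence $v^n\to v^*$ in $\Xc$ with $v^*\in\Kc(M^*)$. The index set $\N_0\times\Sc$ is countable and $\Pc(A)$ is weakly compact because $A$ is compact. A diagonal argument therefore gives a further subsequence with $\pi^n(t,i)\rightharpoonup\pi^*(t,i)$ weakly for every $(t,i)$, which defines $\pi^*\in\Pi$.

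\textbf{Support of the limit, $\pi^*\in\Gamma(v^*)$.} As in Step~1 of Lemma~\ref{lm:Psi.continuous}, $H_n(a):=H(t,i,v^n(\cdot;t,i),a)\to H^*(a):=H(t,i,v^*(\cdot;t,i),a)$ uniformly on $A$. By Assumption~\ref{assum:lipschitz} and $v^n\in\Kc(M^*)$, the $H_n$ are also equi-Lipschitz in $a$ with the constant $L_{M^*,i}$ from Lemma~\ref{lm:entropy.est}.

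Let $m^*:=\max_A H^*$ and fix $\eta>0$. On $U_\eta:=\{a:H^*(a)<m^*-2\eta\}$, for large $n$ we have $H_n\le\max H_n-\eta$. Using the cone-condition lower bound on the Gibbs normalizer from the proof of Lemma~\ref{lm:entropy.est},
\[
\pi^n(t,i)(U_\eta)\le \mathrm{Leb}(A)\,e^{-\eta/\lambda_n}\max\{K_0^{-1},(K_1K_2)^{-1}(L_{M^*,i}/\lambda_n)^{\ell}\}\to0 .
\]
By the portmanteau theorem ($U_\eta$ is open), $\pi^*(t,i)(U_\eta)=0$ for all $\eta>0$. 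Hence $\supp\pi^*(t,i)\subseteq E(t,v^*(\cdot;t,i),i)$, that is, $\pi^*\in\Gamma(v^*)$.

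\textbf{Identifying the value, $v^*=V^{\pi^*}$.} We have
\[
v^n=V^{\pi^n}_{\lambda_n}=V^{\pi^n}+\lambda_n\cdot(\text{entropy part}).
\]
By Lemmas~\ref{lm:entropy.est} and~\ref{lm:est.0}, the entropy part is bounded by $C(1+|\ln\lambda_n|)(w(i)+w(j))$. It is therefore killed by the factor $\lambda_n$, since $\lambda_n|\ln\lambda_n|\to0$. It remains to show $V^{\pi^n}(j;t,i)\to V^{\pi^*}(j;t,i)$ pointwise; the $\Kc(M^*)$ tail bound then upgrades this to $\Xc$-convergence exactly as in Step~5 of Lemma~\ref{lm:Psi.continuous}.

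Weak convergence together with continuity of $a\mapsto p(t,i,j,a)$ and $a\mapsto f(s,j,a;t,i)$ (Assumption~\ref{assum:lipschitz}) gives the nodewise convergences \eqref{eq:transition.node.convergence}--\eqref{eq:reward.node.convergence} with $\pi^n\to\pi^*$. The proof of Lemma~\ref{lm:Psi.continuous} only used these nodewise facts and not uniform density convergence. Its Steps~2--4 then carry over verbatim: convergence of multi-step laws, the uniform integrability \eqref{eq:w.uniform.integrability}, and the temporal tail \eqref{eq:value.temporal.tail}. This gives $v^*=V^{\pi^*}$, hence $v^*\in\Psi(v^*)$, and Proposition~\ref{prop:fixedpoint.origin} yields that $\pi^*$ is a relaxed equilibrium.

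\textbf{Main obstacle.} The main obstacle is the concentration step. We need the Gibbs normalizer lower bound to hold uniformly in $n$, with only a polynomial loss $\lambda_n^{-\ell}$, so that it is beaten by the factor $e^{-\eta/\lambda_n}$. This is exactly where the uniform cone condition and the uniform membership $v^n\in\Kc(M^*)$ are essential.
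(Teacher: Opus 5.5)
Your proposal is correct and follows essentially the same route as the paper. Both arguments take $\lambda_n\to0$, use compactness of $\Kc(M^*)$ with a diagonal extraction, obtain $\pi^*\in\Gamma(v^*)$ from Gibbs concentration, pass to the limit in $V^{\pi^n}$ via the weighted uniform-integrability argument of Lemma~\ref{lm:Psi.continuous} (with nodewise convergence now supplied by weak convergence), and use $\lambda_n|\ln\lambda_n|\to0$ to remove the entropy bias. In the support step you bound the Gibbs mass of the open sets $U_\eta$ and use the open-set form of the portmanteau theorem, which is the right direction. The paper instead applies it to a closed set $D$, where weak convergence only gives $\limsup_k\pi^{n_k}(t,i)(D)\le\pi^*(t,i)(D)$, so your formulation (or enlarging $D$ to an open neighborhood that keeps a gap) is what makes that step rigorous.
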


\subsection{Proof of Theorem \ref{thm:relax.origin}}

Let $\{\lambda_n\}_{n\in\mathbb N}\subset(0,1]$ satisfy
$\lambda_n\to0+$. For each $n$, Theorem \ref{thm:existence.regular} yields a fixed point $v^n\in\Kc(M^*)$ of $\Psi_{\lambda_n}$ such that
$
\pi^n:=\Gamma_{\lambda_n}(v^n)
$
is a regularized equilibrium for the entropy-regularized problem with entropy weight $\lambda_n$. In particular,
$
v^n=V_{\lambda_n}^{\pi^n}.
$

\begin{Lemma}\label{lm:zero.entropylimit}
\bi
\item[(i)] There exists a subsequence $(v^{n_k}, \pi^{n_k})_{k\in \N}$ and a pair $(v^*, \pi^*)\in \Kc(M^*)\times \Pi$ such that 
$$ v^{n_k}\to  v^*  \;\text{in }\Kc(M^*);\; \text{ and }
\pi^{n_k}(t,i)\Longrightarrow\pi^*(t,i)\;\text{ weakly in }\Pc(A),\quad  \forall (t,i)\in\N_0\times\Sc.
$$
\item[(ii)] For each $(t,i)\in\N_0\times\Sc$, $\supp(\pi^*(t,i))\subset E(t, v^*(\cdot; t, i), i)$.  Equivalently, $\pi^*\in\Gamma(v^*)$.
\ei
\end{Lemma}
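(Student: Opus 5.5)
For part (i) we combine the compactness from Lemma~\ref{lm:KM.compact} with weak compactness of probability measures on the compact set $A$. The set $\Kc(M^*)$ is compact in the product topology of $\Xc$, and this topology is metrizable by the metric in the Remark following Lemma~\ref{lm:KM.compact}. Hence $\Kc(M^*)$ is sequentially compact, and $(v^n)$ has a subsequence converging to some $v^*\in\Kc(M^*)$. Since $A$ is compact, $\Pc(A)$ is weakly compact and metrizable by Prokhorov's theorem. Because $\N_0\times\Sc$ is countable, a Cantor diagonal argument applied to the sequence $\pi^{n}(t,i)$ at each node, enumerated as $(t_1,i_1),(t_2,i_2),\dots$, gives a further subsequence along which $\pi^{n_k}(t,i)\Rightarrow\pi^*(t,i)$ for every $(t,i)$. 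Equivalently, one can use compactness of the countable product $\prod_{(t,i)}\Pc(A)$ and extract jointly with $v^n$. Setting $\pi^*(t,i)$ to be these limits defines $\pi^*\in\Pi$.

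For part (ii), fix $(t,i)$ and write
\[
H_k(a):=H(t,i,v^{n_k}(\cdot;t,i),a),\qquad H_*(a):=H(t,i,v^*(\cdot;t,i),a).
\]
As in Step 1 of the proof of Lemma~\ref{lm:Psi.continuous}, we have
\[
\sup_{a\in A}|H_k-H_*|\le(\gamma_WW(i)+b_W)\|v^{n_k}(\cdot;t,i)-v^*(\cdot;t,i)\|_{\wt}\to0 .
\]
In addition, $H_*$ is continuous on $A$, in fact Lipschitz by \eqref{eq:entropyest.1}. Let $m:=\max_A H_*$, so that $E(t,v^*(\cdot;t,i),i)=\{H_*=m\}$. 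Suppose $a_0\notin E$. Then $H_*(a_0)<m-3\eta$ for some $\eta>0$, and by continuity $H_*<m-2\eta$ on an open neighbourhood $U$ of $a_0$. For large $k$, uniform convergence gives $H_k<m-\eta$ on $U$.

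The denominator of the Gibbs density is bounded from below using the maximizer $\bar a$ of $H_*$. On $A\cap B_r(\bar a)$ we have $H_k\ge m-\eta/2$ for large $k$ and small $r$, with $r$ chosen using the uniform Lipschitz bound. Then
\[
\pi^{n_k}(t,i)(U)\le \frac{\Leb(A)\,e^{(m-\eta)/\lambda_{n_k}}}{\Leb(A\cap B_r(\bar a))\,e^{(m-\eta/2)/\lambda_{n_k}}}\longrightarrow0 .
\]
Here the positivity of $\Leb(A\cap B_r(\bar a))$ is exactly what Assumption~\ref{assum:cone} provides. The cone condition gives $\Leb(A\cap B_r(\bar a))\ge \Leb(\textsf{cone}_\iota\cap B_{r\wedge\vartheta}(0))>0$. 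By the portmanteau theorem for open sets, $\pi^*(t,i)(U)\le\liminf_k\pi^{n_k}(t,i)(U)=0$. Thus every point outside $E$ has a $\pi^*(t,i)$-null neighbourhood, and therefore $\supp(\pi^*(t,i))\subseteq E$. Since $(t,i)$ was arbitrary, $\pi^*\in\Gamma(v^*)$.

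The main obstacle is this concentration estimate for the Gibbs measures. It needs two ingredients: uniform-in-$k$ control of $H_k$ on the fixed node, which is supplied by convergence in $\Bc_{\wt}$ together with \eqref{eq:assume.pW}, and a lower bound on the mass of $A$ near the maximizer that does not depend on $k$, which is supplied by Assumption~\ref{assum:cone}. The compactness step in (i) is routine once metrizability of both product spaces is noted.
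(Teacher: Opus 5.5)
Your proposal is correct and follows essentially the same route as the paper: compactness of $\Kc(M^*)$ plus a diagonal extraction in $\Pc(A)$ for (i), and for (ii) uniform convergence of the Hamiltonians, a gap away from the argmax, and a cone-condition lower bound on the partition function near a maximizer. The one difference is in your favour. You apply the concentration estimate to relatively open neighbourhoods $U$ and use $\pi^*(t,i)(U)\le\liminf_k\pi^{n_k}(t,i)(U)$, which is the correct direction of the portmanteau theorem; the paper instead invokes closedness of the compact set $D$, which only gives $\limsup_k\pi^{n_k}(t,i)(D)\le\pi^*(t,i)(D)$ and needs exactly your open-set modification to conclude.
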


\begin{proof}
  For simplicity of notation, after passing to subsequences we continue
  to write $n$ in place of $n_k$.
  
  \medskip
  \noindent
  \textbf{Part (i).}
By Lemma \ref{lm:KM.compact}, $\Kc(M^*)$ is compact under the product topology, there exist a subsequence, still denoted by $(v^n)_{n}$, and $v^*\in\Kc(M^*)$ such that 
$
v^n\to v^*\text{ in }\Kc(M^*).
$
Equivalently,
$$
\|v^n(\cdot;t,i)-v^*(\cdot;t,i)\|_{\wt}\to0,\quad \forall (t,i)\in\N_0\times\Sc.
$$  
The set $\N_0\times\Sc$ is countable; enumerate it as 
$$
\N_0\times\Sc=\{(t_1,i_1),(t_2,i_2),\ldots\}.
$$
Since $A$ is compact, $\Pc(A)$ is compact under weak convergence. Starting from the above subsequence, successively extract a subsequence along which $\pi^n(t_1,i_1)$ converges weakly, then a further subsequence along which $\pi^n(t_2,i_2)$ converges weakly, and so on. Then the diagonal subsequence, denoted by $\{\pi^{n_k}\}$, satisfies
$$
\pi^{n_k}(t,i)\Longrightarrow\pi^*(t,i), \quad \forall (t,i)\in\N_0\times\Sc,
$$
for some $\pi^*(t,i)\in\Pc(A)$. Thus $\pi^*\in\Pi$. Meanwhile, the subsequence $(v^{n_k})_k$ also converges to $v^*$, which completes the proof for Part (i).
  
\medskip
\noindent
\textbf{Part (ii).}
Fix $(t,i)\in\N_0\times\Sc$ and Recall \eqref{eq:def.H}.
By a similar argument as from \eqref{eq:product.conv.coordinate} to \eqref{eq:F.uniform.convergence}, we get
$\|H(t,i, v^{n_k}(\cdot; t, i), a)-H(t,i, v^*(\cdot; t, i), a)\|_{L^\infty(a)}\to 0.$
Let
$$
E_*:=E\big(t,v^*(\cdot;t,i),i\big) =\argmax_{a\in A}H(t,i, v^*(\cdot; t, i), a),\quad \overline H_*:=\max_{a\in A}H(t,i, v^*(\cdot; t, i), a).
$$
To prove that $\supp(\pi^*(t,i))\subseteq E_*$, it suffices to show that $\pi^*(t,i)(D)=0$ for every compact set $D\subseteq A\setminus E_*$. Fix such a set $D$. Since $D$ and $E_*$ are disjoint compact sets and $G_*(\cdot)$ is continuous, 
$$
\eps_0:=\overline H_*-\max_{a\in D}H(t,i, v^*(\cdot; t, i), a)>0.
$$
Choose $a^*\in E_*$. By define $L_{t, i}:= L_f w(i)+L_p M^*(w(i)+C_w)$, a similar argument to \eqref{eq:entropyest.1} yields
\be\label{eq:lm.limit0} 
|H(t,i, v^{n_k}(\cdot; t, i), a)-H(t,i, v^{n_k}(\cdot; t, i), a')|  \leq L_{t,i}|a-a'|,  \;\quad \forall a,a'\in A,\quad \forall k\in\N.
\ee 
Recall $\vartheta$ in Assumption \ref{assum:cone}. Set
$$
r_0:=\min\left\{\vartheta,\frac{\eps_0}{8L_{t,i}}\right\},\;  D_0:=A\cap B_{r_0}(a^*).
$$
The uniform cone condition in Assumption \ref{assum:cone} implies that $\Leb(D_0)>0$. Since
$H(t,i, v^{n_k}(\cdot; t, i), a)\to H(t,i, v^*(\cdot; t, i), a)$ on $A$ uniformly, for all sufficiently large $k$,
$$
\sup_{a\in D}H(t,i, v^{n_k}(\cdot; t, i), a) \leq \overline H_*-\frac{3\eps_0}{4}, \quad H(t,i, v^{n_k}(\cdot; t, i), a^*)\geq \overline H_*-\frac{\eps_0}{8}.
$$
For every $a\in D_0$, \eqref{eq:lm.limit0} then gives
$$
H(t,i, v^{n_k}(\cdot; t, i), a) \geq H(t,i, v^{n_k}(\cdot; t, i), a^*)-r_0L_{t,i} \geq \overline H_*-\frac{\eps_0}{4}.
$$
Therefore, as $k\to\infty$,
$$
\begin{aligned}
\pi^{n_k}(t,i)(D)=& \int_D \pi^{n_k}(a)da=\frac{\int_D\exp(H(t,i, v^{n_k}(\cdot; t, i), a)/\lambda_{n_k})\,da} {\int_A\exp(H(t,i, v^{n_k}(\cdot; t, i), a)/\lambda_{n_k})\,da}\\
&\leq  \frac{\Leb(D)\exp\big((\overline H_*-3\varepsilon/4)/\lambda_{n_k}\big)}{\Leb(D_0)  \exp\big((\overline H_*-\varepsilon/4)/\lambda_{n_k}\big)}  
=  \frac{\Leb(D)}{\Leb(D_0)}  \exp\left(-\frac{\varepsilon}{2\lambda_{n_k}}\right)  \to 0.
\end{aligned}
$$
Since $D$ is closed and $\pi^n(t,i)$ converges to $\pi^*(t,i)$ weakly in $\Pc(A)$,
we then conclude from above that $\pi^*(t,i)(D)=0$. This proves  Part (ii).
\end{proof}  

\begin{Lemma}\label{lm:zero.entropylimitV}
Under the notations and assumptions in Lemma \ref{lm:zero.entropylimit}, for every $(t,i,j)\in\N_0\times\Sc^2$,
\be\label{eq:lm.limit1}
V^{\pi^{n_k}}(j;t,i) \to V^{\pi^*}(j;t,i).
\ee
Moreover,
$
v^*=V^{\pi^*}.
$
\end{Lemma}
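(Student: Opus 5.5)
Along the subsequence of Lemma~\ref{lm:zero.entropylimit}, write $\pi^k:=\pi^{n_k}$, $v^k:=v^{n_k}$, $\lambda_k:=\lambda_{n_k}$. The plan is to re-run Steps~2--4 of the proof of Lemma~\ref{lm:Psi.continuous}, with two changes: weak convergence of $\pi^k(t,i)$ replaces uniform convergence of Gibbs densities, and the entropy term is handled separately.

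\textbf{Node-wise convergence.} For fixed $(t,i,j)$ and $s$, the maps $a\mapsto p(t,i,j,a)$ and $a\mapsto f(s,j,a;t,i)$ are continuous and bounded on the compact set $A$ by Assumption~\ref{assum:lipschitz}. Weak convergence $\pi^k(s,r)\Rightarrow\pi^*(s,r)$ therefore gives
\[
p^{\pi^k}(s,r,l)\to p^{\pi^*}(s,r,l), \qquad f^{\pi^k(s,r)}(s,r;t,i)\to f^{\pi^*(s,r)}(s,r;t,i).
\]

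\textbf{Finite-horizon convergence.} Next we run the induction of Step~2 to get convergence of the multi-step laws
\[
q^k_m(r)=\P\bigl(X^{\pi^k}_{t+m}=r \,\big|\, X^{\pi^k}_{t+1}=j\bigr).
\]
The spatial tails are controlled uniformly in $k$ by the $W$-Lyapunov bound \eqref{eq:lmcontinue.step2.3}, which holds for every $\pi\in\Pi$. The $w$-uniform integrability \eqref{eq:w.uniform.integrability}, together with the growth bound \eqref{eq:reward.growth.uniform}, then yields convergence of each reward expectation for fixed $m$, as in \eqref{eq:fconverge.step3}. Lemma~\ref{lm:est.0} gives a temporal tail bound of the form \eqref{eq:value.temporal.tail}, now involving only $\delta$ and uniform in $k$. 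Combining the two proves \eqref{eq:lm.limit1}.

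\textbf{Identifying $v^*$.} Since $v^k=V^{\pi^k}_{\lambda_k}$, we have
\[
V^{\pi^k}_{\lambda_k}(j;t,i)-V^{\pi^k}(j;t,i)=\lambda_k\,\E\Bigl[\sum_{m\ge1}\widetilde\delta(m)\,\Hc\bigl(\pi^k(t+m,X^{\pi^k}_{t+m})\bigr) \,\Big|\, X^{\pi^k}_{t+1}=j\Bigr].
\]
By Lemma~\ref{lm:entropy.est} with $M=M^*$ (using $v^k\in\Kc(M^*)$), $|\Hc(\pi^k(s,r))|\le C_0+C_1|\ln\lambda_k|+C_2\ln(1+M^*)+C_3w(r)$. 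By Lemma~\ref{lm:est.0}, the conditional expectation of $w(X_{t+m})$ is at most $w(j)+b_w/(1-\gamma_w)$. Since $\sum\widetilde\delta<\infty$, the difference is bounded by
\[
C\,\lambda_k\bigl(1+|\ln\lambda_k|+w(j)\bigr)\longrightarrow 0 .
\]
Hence, pointwise in $(j,t,i)$,
\[
v^*(j;t,i)=\lim_k v^k(j;t,i)=\lim_k V^{\pi^k}(j;t,i)=V^{\pi^*}(j;t,i),
\]
where the first equality uses that convergence in $\Xc$ implies pointwise convergence. Both $v^*$ and $V^{\pi^*}$ are finite and satisfy the bound $|v^*|\le M^*(w(i)+w(j))$, so $V^{\pi^*}\in\Xc$ and $v^*=V^{\pi^*}$ as elements of $\Xc$.

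\textbf{Main obstacle.} The delicate step is the induction for $q^k_m$. There the terms $q^k_m(l)\,p^{\pi^k}(t+m,l,r)$ combine two convergent factors, and $\pi^k$ converges only weakly rather than in total variation. This works because for each fixed $(l,r)$ the function $a\mapsto p(t+m,l,r,a)$ is continuous, and the $R$-truncation reduces everything to finitely many such products. So the argument of Step~2 carries over verbatim once node-wise convergence is established by weak convergence instead of by \eqref{eq:gibbs.uniform.convergence}.
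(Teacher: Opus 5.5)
Your proposal is correct and follows essentially the same route as the paper. You rerun Steps 2--4 of the proof of Lemma~\ref{lm:Psi.continuous} without the entropy term to obtain \eqref{eq:lm.limit1}, then bound $v^{n_k}-V^{\pi^{n_k}}$ by $C\lambda_{n_k}(1+|\ln\lambda_{n_k}|+w(j))$ via Lemmas~\ref{lm:entropy.est} and~\ref{lm:est.0} to identify $v^*=V^{\pi^*}$. Your explicit replacement of Step~1 is also what the paper's ``similar argument from Step 1 to Step 3'' tacitly requires: Step~1 uses uniform convergence of the Gibbs densities, which is unavailable as $\lambda_{n_k}\to0$, and you instead test weak convergence of $\pi^{n_k}(s,r)$ against the continuous bounded maps $a\mapsto p(s,r,l,a)$ and $a\mapsto f(s,r,a;t,i)$.
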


\begin{proof}
The proof for \eqref{eq:lm.limit1} follows a similar argument as in the proof of Lemma \ref{lm:Psi.continuous}. And fix $(t,i,j)\in\N_0\times\Sc^2$, we define the following similar notations: 
For $N\in\N$, define
$$
\begin{aligned}
&V_N^{\pi^{n_k}}(j;t,i):=\E\left[\sum_{m=1}^{N}f^{\pi^{n_k}(t+m,X_{t+m}^{\pi^{n_k}})}(t+m,X_{t+m}^{\pi^{n_k}};t,i)\,\middle|\,X_{t+1}^{\pi^{n_k}}=j\right],\\
&V_N^{\pi^*}(j;t,i):=\E\left[\sum_{m=1}^{N}f^{\pi^*(t+m,X_{t+m}^{\pi^*})}(t+m,X_{t+m}^{\pi^*};t,i)\,\middle|\,X_{t+1}^{\pi^*}=j\right].
\end{aligned}
$$
Then by ignoring the entropy part, a similar argument from Step 1 to Step 3 in the proof of Lemma \ref{lm:Psi.continuous} yields 
$$
\begin{aligned}
  &\E\left[  f^{\pi^{n_k}(t+m,X_{t+m}^{\pi^{n_k}})}  (t+m,X_{t+m}^{\pi^{n_k}};t,i)  \,\middle|\,  X_{t+1}^{\pi^{n_k}}=j  \right]  \\
  &\to   \E\left[  f^{\pi^*(t+m,X_{t+m}^{\pi^*})}  (t+m,X_{t+m}^{\pi^*};t,i)  \,\middle|\,  X_{t+1}^{\pi^*}=j  \right].
\end{aligned}
$$
This further implies that $V_N^{\pi^{n_k}}(j;t,i)\to V_N^{\pi^{*}}(j;t,i)$ for all $(t,i)\in \N_0\times\Sc$. Then a similar argument in Step 4 therein (by ignoring the entropy term again) shows
\be\label{eq:lmlimit1.v} 
V^{\pi^{n_k}}(j;t,i)\to V^{\pi^{*}}(j;t,i),\quad \forall (t,i, j)\in \N_0\times\Sc^2.
\ee 

It remains to identify the limit $v^*$. Since
$v^{n_k}=V_{\lambda_{n_k}}^{\pi^{n_k}}$,
$$
v^{n_k}(j;t,i)-V^{\pi^{n_k}}(j;t,i)=\lambda_{n_k}\E\left[  \sum_{m=1}^{\infty}  \widetilde\delta(m)\Hc(\pi^{n_k}(t+m,X_{t+m}^{\pi^{n_k}}))  \,\middle|\,  X_{t+1}^{\pi^{n_k}}=j  \right].
$$
By Lemma \ref{lm:entropy.est}, the fact that $v^{n_k}\in\Kc(M^*)$, and $\lambda_{n_k}\in(0,1]$, there exists a constant $C>0$, independent of $n,m,r$, such that
$$
|\Hc(\pi^{n_k}(t+m,r))|  \leq  C\left(  1+|\ln\lambda_{n_k}|+w(r)  \right).
$$
We then obtain from Lemma \ref{lm:est.0} that
$$
\begin{aligned}
|v^{n_k}(j;t,i)-V^{\pi^{n_k}}(j;t,i)|  &\leq C\lambda_{n_k}  \sum_{m=1}^{\infty}\widetilde\delta(m)  \left(  1+|\ln\lambda_{n_k}|  +\gamma_w^{m-1}w(j)  +\frac{b_w}{1-\gamma_w} \right).
\end{aligned}
$$
Then by
$
\lambda_{n_k}\to 0,$
$\lambda_{n_k}|\ln\lambda_{n_k}|\to 0 $
and 
$\sum_{m=1}^{\infty}\widetilde\delta(m)<\infty$,  the right-hand side of the above inequality converges to zero. Therefore,
$$
v^{n_k}(j;t,i)-V^{\pi^{n_k}}(j;t,i)\to 0.
$$
By Part (i) in Lemma \ref{lm:zero.entropylimit}, $v^{n_k}(j;t,i)\to  v^*(j;t,i)$ for all $(t,i,j)\in \N_0\times \Sc^2$. This together with \eqref{eq:lmlimit1.v} tells 
$$
v^*(j;t,i)=V^{\pi^*}(j;t,i),\quad\forall(t,i,j)\in\N_0\times\Sc^2.
$$
\end{proof}

\begin{proof}[Proof of Theorem \ref{thm:relax.origin}]
  Lemma \ref{lm:zero.entropylimit} Part (ii) and Lemma \ref{lm:zero.entropylimitV} together tells that 
  $$\supp(\pi^*(t,i))\subset E(t, v^*(\cdot; t, i), i)=E(t, V^{\pi^*}(\cdot; t, i), i),\quad \forall (t,i)\in \N_0\times \Sc.$$
  Then by Proposition \ref{prop:fixedpoint.origin}, $\pi^*$ is a relaxed  equilibrium for the original problem.
\end{proof}

\section{Policy iteration under weighted discounting and its regret analysis}\label{sec:pia}

Throughout this section, we study the policy iteration algorithm. We first show the exponential convergence of PIA for sufficiently small discounting factors. To complement the convergence result, we also provide a counterexample in which PIA fails to converge when discounting is weak. Finally, we provide a quantitative regret analysis showing that PIA generates approximate equilibria.

 In contrast to the fixed-point argument in Sections \ref{sec:entropy}--\ref{sec:origin}, exponential convergence of PIA  requires a global norm that is uniform in the calendar time and the evaluating state. We therefore introduce
\be\label{eq:PIA.global.norm}
\|v\|_{\widetilde \Xc}  :=  \sup_{(t,i)\in\N_0\times\Sc}  \frac{\|v(\cdot;t,i)\|_{\wt}}{w(i)},  \quad  v\in\Xc,
\ee
and set
$$
\widetilde \Xc :=\left\{ v\in\Xc:\ \|v\|_{\widetilde \Xc}<\infty \right\}.
$$
Since $w/W$ is bounded and $w\geq 1$, $\Kc(M)\subset\widetilde \Xc$ for every $M<\infty$.

Fix $\lambda>0$. Starting from $v^0\in\Kc(M_\lambda^*)$, we introduce the \textit{policy iteration algorithm} (PIA) as follows:
\begin{itemize}
  \item \textbf{Initialization:} Start with an initial guess $v^0\in \Kc(M^*_\lambda)$.
  \item \textbf{Iteration Step $n\geq 0$:} 
  \begin{enumerate}
 \item \textit{Policy Update:} Given $v^{n}$, the updated policy $\pi^{n+1}$ is computed as:
 \bee
   \pi^{n+1}(t,i, a) := \Gamma_\lambda(v^n)(t,i, a)=\frac{e^{\frac{1}{\lambda} [f(t,i,a;t,i)+p(t,i,a)\cdot v^n(\cdot; t, i)]}}{\int_A e^{\frac{1}{\lambda} [f(t,i, a';t,i)+p(t,i,a')\cdot v^n(\cdot; t, i)]} da'}.
 \eee
 \item {\it Policy Evaluation:} Compute the updated auxiliary value function 
 $$v^{n+1}(j; t,i):=V^{\pi^{n+1}}(j; t, i),\quad (t,i,j)\in \N_0\times \Sc^2.$$
 Equivalently,
 $
 v^{n+1}=\Psi_\lambda(v^n).
 $
  \end{enumerate}
\end{itemize}
Notice that, by Lemma~\ref{lm:self-mapping}, $(v^n)_{n\ge0}$ remains in $\Kc(M_\lambda^*)$ whenever $v^0\in\Kc(M_\lambda^*)$.

We first show the convergence of PIA with extra assumptions. 
\begin{Assumption}\label{assum:weight.discounting}
  The following conditions hold.
\begin{enumerate}
\item We further require $\gamma_w=\gamma_{W}=0$ in Assumption~\ref{assum:lyapunov.p}. Thus, uniformly in $(t,i,a)\in\N_0\times\Sc\times A$,
\be \label{eq:PIA.zero.drift}
\sum_{j\in\Sc}p(t,i,j,a)w(j)\leq b_w, \quad \sum_{j\in\Sc}p(t,i,j,a)W(j)\leq b_{W}.
\ee
\item The reward has the following weighted-discounting representation:
\be\label{eq:PIA.weighted.reward}
f(s,j,a;t,i) = \widetilde\delta(s-t)g(j,a;t,i),  \qquad 0\leq t\leq s,
\ee
where
\be \label{eq:PIA.weighted.discount}
\widetilde\delta(k) =  \int_{[0,\rho_0]}\rho^k\,F(d\rho),\quad k\in\N_0,
\ee
for a probability measure $F$ on $[0,\rho_0]$, with $\rho_0\in(0,1]$. The entropy term is discounted by the same function $\widetilde\delta$.
 
\item There exists positive constants $K_g, K_H<\infty$ such that
\begin{align}
&\operatorname{osc}_{a\in A}g(j,a;t,i)  \leq K_g, \quad  \forall(t,i, j)\in\N_0\times\Sc^2,  \label{eq:PIA.g.osc}\\
&\operatorname{osc}_{a\in A}\left[g(i,a;t,i)+p(t,i,a)\cdot v(\cdot;t,i)\right]\leq K_H,\quad\forall(t,i)\in\N_0\times\Sc, \;\forall v\in\Kc(M_\lambda^*) \label{eq:PIA.Hamiltonian.osc}
\end{align}
 
\end{enumerate}
\end{Assumption}

\begin{Remark}
Condition \eqref{eq:PIA.Hamiltonian.osc} is a uniform oscillation condition on the full one-step Hamiltonian. It is needed to obtain a global entropy-difference estimate that closes in the norm \eqref{eq:PIA.global.norm}. It is satisfied, for example, if \eqref{eq:PIA.g.osc} holds and the action dependence of $p$ satisfies a uniform weighted bound strong enough that
$$
\sup_{t,i}\sup_{a,a'\in A}\sum_{j\in\Sc}  |p(t,i,j,a)-p(t,i,j,a')|W(j)<\infty
$$
after restriction to $\Kc(M_\lambda^*)$. And $K_H$ is taken independently over all  $\lambda(0, 1]$ by the uniform choice of $M^*$.
\end{Remark}

Given a policy $\pi\in\Pi^r$ and $\rho\in[0,\rho_0]$, we define for $(t,s, i)\in \Delta\times \Sc$ that
\be\label{eq:PIA.rho.value}
\begin{aligned}
&U_\rho^\pi(s,j;t,i)  :=\\  &\E\Bigg[\sum_{m=0}^{\infty}\rho^{m+1}\Big(&g^{\pi(s+m,X_{s+m}^\pi)}(X_{s+m}^\pi;t,i)+\lambda\Hc(\pi(s+m,X_{s+m}^\pi))\Big)  \,\Bigm|\,  X_s^\pi=j\Bigg].
\end{aligned}
\ee
Then the weighted-discounting representation gives
\be\label{eq:PIA.mixture.identity}
V_\lambda^\pi(j;t,i)  =  \int_{[0,\rho_0]}U_\rho^\pi(t+1,j;t,i)\,F(d\rho).
\ee
Moreover, the Markov property yields that
\be\label{eq:vn.rho}
\begin{aligned}
U_\rho^\pi(s,j;t,i)=\rho\Bigg[&g^{\pi(s,j)}(j;t,i)+\lambda\Hc(\pi(s,j)) +\sum_{r\in\Sc}p^\pi(s,j,r)U_\rho^\pi(s+1,r;t,i)\Bigg].
\end{aligned}
\ee
Similar to \eqref{eq:PIA.global.norm}, for a family $U=(U(s,\cdot;t,i))_{0\leq t<s,\ i\in\Sc}$, define
\be\label{eq:PIA.U.norm}
\|U\|_{\widehat \Xc}  :=  \sup_{\substack{t\in\N_0,\ i\in\Sc\\s\geq t+1}}  \frac{\|U(s,\cdot;t,i)\|_{\wt}}{w(i)}.
\end{equation}

\begin{Theorem}\label{thm:converge.PI}
Suppose Assumptions~\ref{assum:reward-bound}--\ref{assum:cone} and Assumption~\ref{assum:weight.discounting} hold, and fix $\lambda>0$. There exist finite constants $C_0, C_1>0$, independent of $\rho_0$, such that, if
\be\label{eq:PIA.small.rho}
\rho_0C_0<1\quad\text{and}\quad \theta_\lambda(\rho_0) := \frac{  \rho_0C_1\lambda^{-1}}{ 1-\rho_0C_0}<1,
\ee
then the PIA converges exponentially in $\widetilde \Xc$ by
\be\label{eq:PIA.exponential}
\|v^n-v^*\|_{\widetilde \Xc} \leq \theta_\lambda(\rho_0)^n \|v^0-v^*\|_{\widetilde \Xc}, \quad n\ge0.
\ee
In addition, $C_0, C_1$ can be chosen uniformly over $\lambda(0, 1]$.
\end{Theorem}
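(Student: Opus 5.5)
The plan is to show that $\Psi_\lambda$ is a contraction on $\Kc(M^*_\lambda)$ with respect to the global norm $\|\cdot\|_{\widetilde\Xc}$, with Lipschitz constant $\theta_\lambda(\rho_0)$. By Theorem~\ref{thm:existence.regular} there is a fixed point $v^*\in\Kc(M^*_\lambda)$, and by Lemma~\ref{lm:self-mapping} the iterates satisfy $v^n\in\Kc(M^*_\lambda)$ for all $n$. Once the contraction estimate $\|\Psi_\lambda(v)-\Psi_\lambda(v')\|_{\widetilde\Xc}\le\theta_\lambda(\rho_0)\|v-v'\|_{\widetilde\Xc}$ holds for $v,v'\in\Kc(M^*_\lambda)$, we apply it with $v=v^{n}$ and $v'=v^*$ and iterate to obtain \eqref{eq:PIA.exponential}.

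\textbf{Step 1: stability of the Gibbs map at a single node.} Fix $v,v'\in\Kc(M^*_\lambda)$ and set $\pi=\Gamma_\lambda(v)$, $\pi'=\Gamma_\lambda(v')$. At a node $(s,j)$ the two Hamiltonians differ by
\[
\sup_a|p(s,j,a)\cdot(v-v')(\cdot;s,j)|\le b_W\|(v-v')(\cdot;s,j)\|_{\wt}\le b_W w(j)\|v-v'\|_{\widetilde\Xc},
\]
where the first inequality uses \eqref{eq:PIA.zero.drift}; this is the point where $\gamma_W=0$ is essential. Writing $\Delta:=\sup_a|h-h'|$ for this Hamiltonian difference, a soft-max perturbation argument (the log-partition function is $1$-Lipschitz in sup norm) gives $\|\pi(s,j)-\pi'(s,j)\|_{\mathrm{TV}}\le C\lambda^{-1}\Delta$.

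For the entropy term I will write $\lambda\Hc(\pi)=\lambda\ln Z-\int h\,d\pi$, where $Z$ is the partition function of $h/\lambda$. Then $\lambda|\Hc(\pi(s,j))-\Hc(\pi'(s,j))|$ is bounded by $2\Delta$ plus $|\int h'\,d(\pi-\pi')|$. The oscillation bound \eqref{eq:PIA.Hamiltonian.osc} controls the last term by $K_H\|\pi-\pi'\|_{\mathrm{TV}}\le CK_H\lambda^{-1}\Delta$. The same argument with \eqref{eq:PIA.g.osc} gives $|g^{\pi(s,j)}-g^{\pi'(s,j)}|(j;t,i)\le K_gC\lambda^{-1}\Delta$. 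All three differences are therefore of order $\lambda^{-1}w(j)\|v-v'\|_{\widetilde\Xc}$ (using $\lambda\le1$ to absorb the $2\Delta$ term). Dividing by $W(j)$ and using $w\le C_wW$, the factor $w(j)$ is harmless in the $\wt$-norm.

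\textbf{Step 2: a recursion for $\Delta U:=U^\pi_\rho-U^{\pi'}_\rho$.} Subtracting the two instances of \eqref{eq:vn.rho} gives
\[
\Delta U(s,j)=\rho\Big[(g^{\pi}-g^{\pi'})+\lambda(\Hc(\pi)-\Hc(\pi'))+\textstyle\sum_r p^{\pi}(s,j,r)\Delta U(s+1,r)+\sum_r(p^\pi-p^{\pi'})(s,j,r)U^{\pi'}_\rho(s+1,r)\Big].
\]
The third term is bounded by $b_W\|\Delta U(s+1,\cdot)\|_{\wt}$. For the fourth term I use a uniform a priori bound $\|U^{\pi'}_\rho\|_{\widehat\Xc}\le C$, which follows as in Lemma~\ref{lm:self-mapping} since $v'\in\Kc(M^*_\lambda)$. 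I then estimate $\sum_r|p^\pi-p^{\pi'}|W(r)\le b_W\|\pi-\pi'\|_{\mathrm{TV}}$, splitting $U$ into its $w(i)$ and $W(r)$ growth parts. Dividing by $W(j)w(i)$ and taking the supremum over $s,j,t,i$ yields
\[
\|\Delta U\|_{\widehat\Xc}\le\rho_0C_0\|\Delta U\|_{\widehat\Xc}+\rho_0C_1\lambda^{-1}\|v-v'\|_{\widetilde\Xc}.
\]
Here $C_0,C_1$ depend only on $b_W,b_w,C_w,K_g,K_H$ and $M^*$, so they are uniform in $\lambda\in(0,1]$ by the uniform choice of $M^*$. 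Finiteness of $\|\Delta U\|_{\widehat\Xc}$ comes from the a priori bounds, so under $\rho_0C_0<1$ the first term can be absorbed. Integrating \eqref{eq:PIA.mixture.identity} against $F$ then gives the contraction with constant $\theta_\lambda(\rho_0)$.

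\textbf{Main obstacle.} I expect the hardest part to be making the entropy-difference and transition-difference terms close in the global norm with constants that do not depend on $(t,i,s)$. The entropy is only controlled through oscillation, which is exactly why \eqref{eq:PIA.Hamiltonian.osc} is assumed. The transition difference multiplies $U^{\pi'}$, which has $W$-growth. I will first try to handle this by bounding $U^{\pi'}(s+1,r;t,i)$ by $C(w(i)+W(r))$ and using $\sum_r p(\cdot)W(r)\le b_W$ together with $w(i)\ge1$. If that is not enough, I would fall back on the Lipschitz assumption \eqref{eq:assume.lipap} to control the transition difference.
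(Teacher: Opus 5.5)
Your proposal is correct and follows essentially the same route as the paper. Both arguments use soft-max TV stability of the Gibbs map, the oscillation bounds \eqref{eq:PIA.g.osc}--\eqref{eq:PIA.Hamiltonian.osc} for the reward and entropy differences, an a priori $\widehat\Xc$-bound on $U_\rho$, subtraction of the recursions \eqref{eq:vn.rho} with absorption of the $\rho C_0$ term, and integration of \eqref{eq:PIA.mixture.identity} against $F$. The only minor variations are two. You bound the entropy difference via $\lambda\Hc(\pi)=\lambda\ln Z-\int h\,d\pi$, which costs a harmless extra $2\Delta$ (absorbed for $\lambda\le 1$, or into $\lambda$-dependent constants otherwise), where the paper uses an interpolation/covariance identity. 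You also state the estimate as a general contraction on $\Kc(M^*_\lambda)$, which gives Corollary~\ref{cor:PIA.unique} directly.
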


\begin{Corollary}\label{cor:PIA.unique}
  Under the assumptions of Theorem~\ref{thm:converge.PI}, if
  $\theta_\lambda(\rho_0)<1$, then $\Psi_\lambda$ has at most one fixed
  point in $\widetilde \Xc$. Consequently, the entropy-regularized
  equilibrium auxiliary value is unique in $\widetilde \Xc$, and the
  corresponding regularized equilibrium is unique.
\end{Corollary}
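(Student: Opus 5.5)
Suppose $v^*,\bar v\in\widetilde\Xc$ are both fixed points of $\Psi_\lambda$. The plan is to feed each into the policy iteration scheme as its own initial value and use the contraction estimate behind Theorem~\ref{thm:converge.PI}.

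First, we reduce to the invariant set $\Kc(M^*_\lambda)$, where the theorem is formulated. By Lemma~\ref{lm:self-mapping}, any fixed point lying in some $\Kc(M)$ with $M\ge M^*_\lambda$ already belongs to $\Kc(M^*_\lambda)$. We also want to show that a fixed point $v\in\widetilde\Xc$ lies in some $\Kc(M)$. Since $v=\Psi_\lambda(v)=V_\lambda^{\Gamma_\lambda(v)}$, the bound $\|v(\cdot;t,i)\|_{\wt}\le \|v\|_{\widetilde\Xc}w(i)$ controls the Hamiltonian oscillation and hence, as in Lemma~\ref{lm:entropy.est}, the entropy of $\Gamma_\lambda(v)$. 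The growth estimate in the proof of Lemma~\ref{lm:self-mapping} then gives $|v(j;t,i)|\le \theta(M')(w(i)+w(j))$ for a suitable $M'$, so $v\in\Kc(M)$ for some finite $M$. If this bootstrap proves delicate, we will simply read the corollary as asserting uniqueness among fixed points in $\Kc(M^*_\lambda)\subset\widetilde\Xc$, which is where Theorem~\ref{thm:converge.PI} places $v^*$.

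Next, we run PIA from $v^0:=\bar v$. Because $\bar v$ is a fixed point, $v^n=\Psi_\lambda^n(\bar v)=\bar v$ for every $n$. The exponential estimate \eqref{eq:PIA.exponential} then gives
\[
\|\bar v-v^*\|_{\widetilde\Xc}=\|v^n-v^*\|_{\widetilde\Xc}\le \theta_\lambda(\rho_0)^n\|\bar v-v^*\|_{\widetilde\Xc}.
\]
The right-hand side is finite since both functions lie in $\widetilde\Xc$. Letting $n\to\infty$ with $\theta_\lambda(\rho_0)<1$ yields $\bar v=v^*$.

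More intrinsically, the estimate behind \eqref{eq:PIA.exponential} is the one-step bound $\|\Psi_\lambda(u)-\Psi_\lambda(u')\|_{\widetilde\Xc}\le\theta_\lambda(\rho_0)\|u-u'\|_{\widetilde\Xc}$ on $\Kc(M^*_\lambda)$. Applying it to $u=\bar v$ and $u'=v^*$ gives the conclusion in a single step. The main point to check is exactly this: the proof of Theorem~\ref{thm:converge.PI} must supply a contraction between two arbitrary elements of the invariant set, not only a contraction toward the particular fixed point $v^*$. The Lipschitz estimates of Lemmas~\ref{lm:PIA.est1}--\ref{lm:PIA.rho.uniform} are of that form, so this should hold.

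Finally, we pass to the policies. By Proposition~\ref{prop:fixed-points-entropy}, every regularized equilibrium $\pi$ with $V_\lambda^\pi\in\widetilde\Xc$ satisfies $\pi=\Gamma_\lambda(V_\lambda^\pi)$, and $V^\pi_\lambda$ is a fixed point of $\Psi_\lambda$. Uniqueness of the fixed point therefore gives $V_\lambda^\pi=v^*$, and hence $\pi=\Gamma_\lambda(v^*)$. This shows the regularized equilibrium is unique.
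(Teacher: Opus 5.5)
Your proposal is correct and is essentially the paper's argument: the contraction estimate from the proof of Theorem~\ref{thm:converge.PI} is applied to two fixed points (starting PIA at $\bar v$ is exactly that estimate), and single-valuedness of the Gibbs map $\Gamma_\lambda$ then identifies the equilibrium policies. Your additional step placing an arbitrary fixed point of $\widetilde\Xc$ inside $\Kc(M^*_\lambda)$ is left implicit in the paper's one-line proof. It is needed, because \eqref{eq:PIA.Hamiltonian.osc} and Lemma~\ref{lm:PIA.rho.uniform} are only available on $\Kc(M^*_\lambda)$, and it does go through: Lemma~\ref{lm:entropy.est} and the growth bound in Lemma~\ref{lm:self-mapping} use $v$ only through $\|v(\cdot;t,i)\|_{\wt}\le M(w(i)+C_w)$, which holds with $M=\|v\|_{\widetilde\Xc}$.
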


\begin{proof}
  If $v^*$ and $\widetilde v^*$ are two fixed points, the same estimate
  as in the proof of Theorem~\ref{thm:converge.PI} gives
  $$
  \|v^*-\widetilde v^*\|_{\widetilde \Xc}
  \le
  \theta_\lambda(\rho_0)
  \|v^*-\widetilde v^*\|_{\widetilde \Xc}.
  $$
  Since $\theta_\lambda(\rho_0)<1$, the two fixed points coincide.
  The Gibbs selector is single-valued, so the corresponding equilibrium
  policies also coincide.
\end{proof}

\begin{Definition}\label{def:weighted.epsilon.equilibrium}
 A policy $\pi\in\Pi$ is called a weighted $\eps$-equilibrium for the original problem if
$$\sup_{\varpi\in\Pc(A)}  J^{\varpi\otimes_1\pi}(t,i) \leq  J^\pi(t,i)+\eps w(i),  \quad  \forall(t,i)\in\N_0\times\Sc.
 $$
\end{Definition}

\begin{Corollary}\label{cor:PIA.epsilon}
 Assume that, for some finite $L_{H}>0$,
\begin{equation}\label{eq:cor.osc}
 \sup_{\substack{(t,i)\in\N_0\times\Sc\\y\in\Kc(M^*)}} \operatorname{Lip}_{a\in A} \left[ g(i,a;t,i)+p(t,i,a)\cdot y \right] \leq L_{H}.
\end{equation}
Suppose the assumptions of Theorem \ref{thm:converge.PI} hold. Then the $n$-th PIA policy  $\pi^n$ is a weighted $\eps_n$-equilibrium for the original  problem with
  \begin{equation}\label{eq:PIA.epsequi}
 \eps_n \leq C\left[\lambda(1+|\ln\lambda|) +  \left( \frac{  \rho_0C_1\lambda^{-1}}{ 1-\rho_0C_0} \right)^{n-1} \|v^0-v^*\|_{\widetilde\Xc}\right],
  \end{equation}
where $C_0, C_1$ are finite constants in Theorem \ref{thm:converge.PI}, and $C>0$ is another finite constant independent of $\lambda\in(0,1]$ and $n$. 

\end{Corollary}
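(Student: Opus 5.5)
Fix $n\ge1$ and $(t,i)$, and write $\pi^n=\Gamma_\lambda(v^{n-1})$. By the computation in the proof of Proposition~\ref{prop:fixedpoint.origin},
\[
\sup_{\varpi}J^{\varpi\otimes_1\pi^n}(t,i)-J^{\pi^n}(t,i)=\max_{a\in A}H^0(a)-\int_A H^0(a)\,\pi^n(t,i,a)\,da ,
\]
where $H^0(a):=H(t,i,V^{\pi^n}(\cdot;t,i),a)$ is the unregularized Hamiltonian evaluated at the unregularized auxiliary value of $\pi^n$. The plan is to compare $H^0$ with the Hamiltonian $H^{n-1}(a):=H(t,i,v^{n-1}(\cdot;t,i),a)$ that generated the Gibbs density $\pi^n(t,i)$. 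The gap then splits into three pieces.

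\textbf{Step 1 (soft-max bias).} Show that
\[
\max_a H^{n-1}-\int_A H^{n-1}\,\pi^n(t,i,a)\,da\le C\lambda(1+|\ln\lambda|).
\]
Use the Gibbs variational identity: the left side equals $\lambda\big(\ln Z/\lambda\text{-type term}\big)$, i.e.
\[
\lambda\ln\int_A e^{(H^{n-1}-\max H^{n-1})/\lambda}\,da+\lambda\,\mathrm{KL}\text{-type term}.
\]
More concretely, $\max H^{n-1}-\int H^{n-1}\pi^n = \lambda\big(\ln(\pi^n(a^*))\cdot\ldots\big)$; I would simply bound it by $-\lambda\ln\Leb(A)+\lambda\int\pi^n\ln\pi^n$, the latter controlled by the upper bound on the Gibbs density from Lemma~\ref{lm:entropy.est}. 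To keep this uniform in $i$, replace the $w(i)W(i)$-dependent Lipschitz constant there by the uniform constant $L_H$ from \eqref{eq:cor.osc}. This is valid since $v^{n-1}\in\Kc(M^*)$ and, under \eqref{eq:PIA.weighted.reward}, $f(t,i,a;t,i)=g(i,a;t,i)$. Redoing the cone estimate \eqref{3.4}–\eqref{3.7} with $L_H$ gives $\sup_a\ln\pi^n(t,i,a)\le C(1+|\ln\lambda|)$, hence the claimed bound with $C$ independent of $i$.

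\textbf{Step 2 (value mismatch).} Write
\[
|H^0-H^{n-1}|\le\sum_j p(t,i,j,a)\,|V^{\pi^n}-v^{n-1}|(j;t,i),
\]
and split
\[
V^{\pi^n}-v^{n-1}=(V^{\pi^n}-V^{\pi^n}_\lambda)+(v^n-v^*)+(v^*-v^{n-1}),
\]
using $v^n=V^{\pi^n}_\lambda$.
\begin{itemize}
\item The first term is $\lambda$ times the discounted entropy expectation. By Step~1 (a uniform entropy bound $C(1+|\ln\lambda|)$) and $\sum\widetilde\delta<\infty$, it is $\le C\lambda(1+|\ln\lambda|)$, uniformly in $j$.
\item The other two terms are bounded by $W(j)w(i)\|\cdot\|_{\widetilde\Xc}$. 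After summing against $p$ using \eqref{eq:PIA.zero.drift}, this gives at most $b_W w(i)\,\theta_\lambda(\rho_0)^{n-1}\|v^0-v^*\|_{\widetilde\Xc}$ by Theorem~\ref{thm:converge.PI} (with $\theta\le1$ absorbing the $\theta^n$ term).
\end{itemize}
Thus $\sup_a|H^0-H^{n-1}|\le C w(i)\big[\lambda(1+|\ln\lambda|)+\theta^{n-1}\|v^0-v^*\|\big]$.

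\textbf{Step 3 (combination).} Bound
\[
\max H^0-\int H^0\pi^n\le\big(\max H^{n-1}-\int H^{n-1}\pi^n\big)+2\sup|H^0-H^{n-1}|,
\]
and use $w(i)\ge1$ to obtain $\eps_n w(i)$ with $\eps_n$ as in \eqref{eq:PIA.epsequi}.

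\textbf{Main obstacle.} I expect the uniform-in-$i$ entropy and soft-max estimate in Step~1 to be the main obstacle: Lemma~\ref{lm:entropy.est} carries $w(i)$, and only \eqref{eq:cor.osc} together with the cone condition removes that dependence. The bound must also hold at all future nodes, which is needed for the entropy term in Step~2.
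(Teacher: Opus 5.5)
Your route is essentially the paper's. The paper's four-term regret decomposition is exactly your three pieces, with your soft-max bias written as $[\widetilde H-\widetilde H_\lambda](t,i,v^{n-1}(\cdot;t,i))+\lambda\Hc(\pi^n(t,i))$. The bracket is controlled by the same $L_H$/cone lower bound on the partition function at scale $r=\lambda\wedge\vartheta$. Your remaining steps also match: $|H^0-H^{n-1}|$ is handled by the $b_W$-Lipschitz dependence on $y$, and $V^{\pi^n}-v^{n-1}$ is split into the regularization bias plus $v^n-v^{n-1}$ through $v^*$.

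One step in Step 1 is wrong as written and needs correcting. Your intermediate bound by $-\lambda\ln\Leb(A)+\lambda\int_A\pi^n\ln\pi^n$ is $\lambda$ times the relative entropy of $\pi^n(t,i)$ with respect to the uniform law, and it is false in general. Take $A=[0,1]$ and $H^{n-1}(a)=\lambda s a$ with $s\downarrow 0$. The left side is $\approx\lambda s/2$, but that relative-entropy term is only $\approx\lambda s^2/24$. Your first display also has the sign of the log-partition term reversed: the bias equals $-\lambda\ln\int_A e^{(H^{n-1}-\max H^{n-1})/\lambda}\,da+\lambda\Hc(\pi^n(t,i))$. The correct pointwise identity, from $\lambda\ln\pi^n(t,i,a)=H^{n-1}(a)-\lambda\ln Z$, is
\[
\max_{a\in A}H^{n-1}(a)-\int_A H^{n-1}(a)\,\pi^n(t,i,a)\,da
=\lambda\ln\pi^n(t,i,a^*)+\lambda\Hc(\pi^n(t,i))
\le\lambda\ln\sup_{a\in A}\pi^n(t,i,a)+\lambda\ln\Leb(A).
\]
So the $L_H$-refined density upper bound goes on the first term and the maximum-entropy bound on the second, not the other way around. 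With this fix your conclusion $C\lambda(1+|\ln\lambda|)$, uniform in $(t,i)$, stands. Everything downstream, Steps 2 and 3, then goes through as you wrote it.

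There is one mild difference from the paper. You derive $|\Hc(\pi^n(s,r))|\le C(1+|\ln\lambda|)$ uniformly over all nodes from the refined density bound, since \eqref{eq:cor.osc} covers every $(s,r)$. The paper instead uses Lemma~\ref{lm:entropy.est}, whose bound carries $w(r)$. Your version is slightly sharper: it gives a $j$-uniform bound on $V^{\pi^n}_\lambda-V^{\pi^n}$ and removes the $w(i)$ from the current-node entropy term. The final $\eps_n$ is the same, because the PIA error term already carries $w(i)$.
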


\begin{Remark}
The weight $w(i)$ in the last line of the proof comes from the error estimate in Theorem \ref{thm:converge.PI} for PIA, and hence, if the convergence of PIA in \eqref{eq:PIA.exponential} is achieved, instead of in terms of weighted norm $\|\cdot\|_\Xc$, in uniform bounded norm. Then the weighted $\eps$-equilibrium in Corollary \ref{cor:PIA.epsilon} becomes an ordinary state-uniform $\eps$-equilibrium result, and a simple condition makes it true is: $\sup_{i\in \Sc}w(i)<\infty$.
\end{Remark}

\begin{Remark}\label{rm:uniform.H.Lipschitz}
 Condition \eqref{eq:cor.osc} is satisfied in several natural special cases, such as the state space being finite, or the transition function being independent of the action.
More generally, suppose
$
 \sup_{t,i}  \operatorname{Lip}_{a\in A}g(i,a;t,i)  \leq L_g<\infty
$
 and there exists $L_p^0<\infty$ such that
 $$
 \sum_{j\in\Sc}  |p(t,i,j,a)-p(t,i,j,a')|W(j) \leq\frac{L_p^0}{w(i)+C_w}|a-a'|, \quad \forall(t,i),\ a,a'\in A.
  $$
  Then, by \eqref{eq: assum.wW0} and \eqref{eq:def.KM}, for any $y\in\Kc(M_\lambda^*)$,
  $
  \|y(\cdot;t,i)\|_{\wt}  \leq  M_\lambda^*(w(i)+C_w).
  $
Hence,
  $$
  \operatorname{Lip}_{a\in A}  H(t,i,y(\cdot;t,i),a)  \leq  L_g+L_p^0M_\lambda^*, \quad \forall (t,i)\in \N_0\times \Sc, \forall y\in\Kc(M_\lambda^*). 
  $$
 Thus
  \eqref{eq:cor.osc} holds.
\end{Remark}

\subsection{A numerical illustration of PIA convergence}

We now provide a numerical example illustrating the convergence of PIA in the strongly discounted regime of Theorem~\ref{thm:converge.PI}. Consider the finite state space $\Sc=\{1,\ldots,6\}$
and the action space
$
A=\{a=(a_1,a_2)\in\R^2:a_1^2+a_2^2\leq1\}.
$
Since $\Sc$ is finite, we take the weight functions $w$ and $W$ to be constant, so that
the norm $\|\cdot\|_{\widetilde\Xc}$ in \eqref{eq:PIA.global.norm} is equivalent to the usual supremum norm.

For $i,j\in S$ and $a\in A$, let
$$
\begin{cases}
p(i,j,a)=\frac{\exp\{-(j-i-a_1)^2+a_2{\bm 1}_{\{j=i\}}\}}
{\sum_{k\in S}\exp\{-(k-i-a_1)^2+a_2{\bm 1}_{\{k=i\}}\}},\\
g(i,a)=R_{\rm base}(i)+100ia_1-(a_1^2+a_2^2) 
\end{cases}
$$
where
$R_{\rm base}(1)=R_{\rm base}(6)=-10^4$, and $R_{\rm base}(i)=0$ for $2\leq i\leq5$.
We take $\lambda=2.5$ and the weighted discounting
$$
\widetilde \delta(k)=\frac12(0.85)^k+\frac12(0.10)^k,
$$
which is of the form \eqref{eq:PIA.weighted.discount} with
$
F=\frac12\delta_{0.85}+\frac12\delta_{0.10}.
$

Starting from $v^0=0$, we implement the PIA: $v^{n+1}=\Psi_\lambda(v^n)$ and
$\pi^{n+1}=\Gamma_\lambda(v^n)$. Since the model is time-homogeneous, both $\pi^{n+1}$ and $v^{n+1}$ depend only on the state variable. Denote by $p^{\pi^{n+1}}$ the transition matrix under $\pi^{n+1}$, 
and write
$r^{n+1}(j):=g^{\pi^{n+1}(j)}(j)+\lambda\Hc(\pi^{n+1}(j))$ for $j\in S.$
Then, the policy evaluation step becomes
$$
v^{n+1}=\left[\frac{0.85}{2}\big(I-0.85p^{\pi^{n+1}}\big)^{-1}+\frac{0.10}{2}\big(I-0.10p^{\pi^{n+1}}\big)^{-1}\right]r^{n+1}.
$$
Figure~\ref{fig:convergence} reports the iteration error
$
\|v^n-v^{n-1}\|_\infty.
$
Its approximately linear decay on the logarithmic scale exhibits the exponential convergence of PIA in this strongly discounted example, in accordance with the convergence result of Theorem~\ref{thm:converge.PI}.
\begin{figure}[htbp]
	\centering
\includegraphics[width=0.85\textwidth]{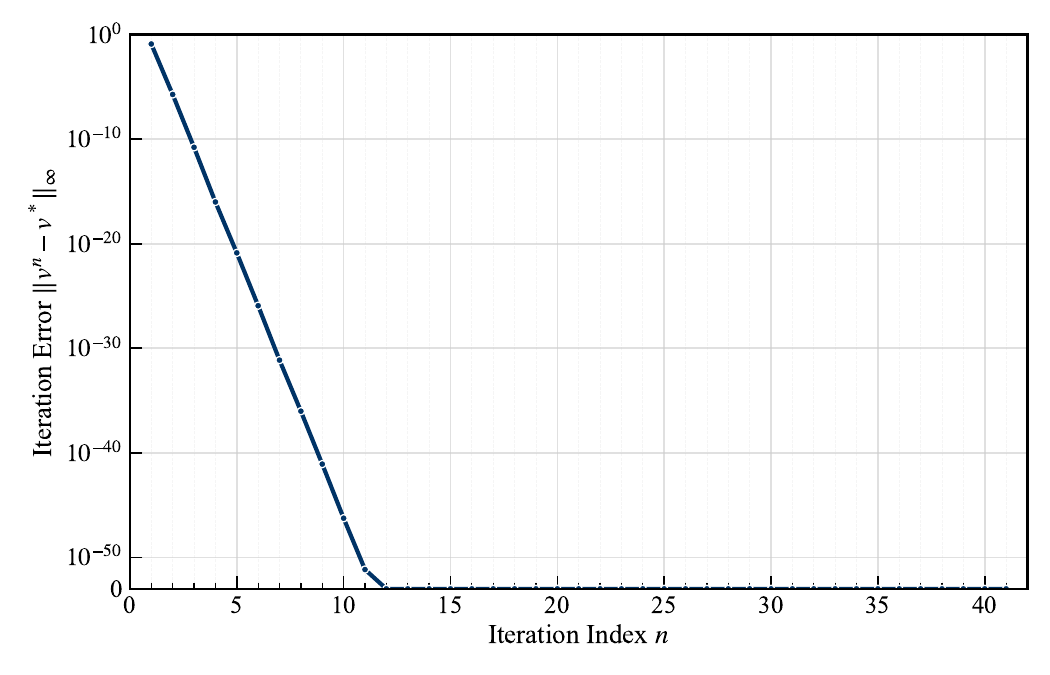}
\caption{Iteration error $\|v^n-v^{n-1}\|_\infty$ for PIA with
	$\lambda=2.5$ and
	$\tilde\delta(k)=\frac12(0.85)^k+\frac12(0.10)^k$.}
	\label{fig:convergence}
\end{figure}

\subsection{An example for the failure of PIA convergence under weak discountings}
\label{subsec:PIA.nonconvergent.interval}

In this subsection, we provide a simple example showing that, when the discounting is sufficiently weak, the iteration
$v^{n+1}=\Psi_\lambda(v^n)$ may fail to converge even when the action space is a compact interval.
The example is time-homogeneous and the reward is of the weighted-discounting form
\eqref{eq:PIA.weighted.reward}, so it is a special case of the framework considered above.

Take the finite state space $\Sc=\{1,2\}$ and the action space $A=[0,1]$. The transition function is
time-homogeneous and is given by
$$
(p(i,j,a))_{i,j\in \Sc}=
\left[
\begin{matrix}
 1&0\\
 a&1-a
\end{matrix}
\right]_{ij},\qquad a\in[0,1].
$$
Thus state $1$ is absorbing, while at state $2$ the action $a$ is the probability of moving to
state $1$. Let
$$
g(1,a)=0,\qquad g(2,a)=-2-4a,
$$
and set
$$
f(s,j,a;t,i)=\widetilde\delta(s-t)g(j,a),\qquad
\widetilde\delta(k)=\frac12(0.99)^k+\frac12(0.01)^k.
$$
The entropy term is discounted by the same $\widetilde\delta$, and we take $\lambda=0.01$.
Notice that \eqref{eq:PIA.weighted.discount} holds with
$F=\frac12\delta_{0.99}+\frac12\delta_{0.01}$.

Since the model is time-homogeneous and $g$ is independent of the evaluating pair $(t,i)$,
the subspace of functions of the form
$$
v(j;t,i)=y(j),\qquad y=(y(1),y(2))^\top\in\R^2,
$$
is invariant under $\Psi_\lambda$. On this subspace, \eqref{2.5} and \eqref{2.8} reduce to a
two-dimensional iteration, which we denote again by $y\mapsto\Psi_\lambda(y)$.

\begin{Proposition}\label{prop:PIA.two.cycle.interval}
 For the above model, the iteration $y^{n+1}=\Psi_\lambda(y^n)$ with $y^0=(0,0)^\top$ does not
 converge. More precisely, for $n\geq1$ one has $y^n(1)=0$, and if $x_n:=y^n(2)$, then
 $x_{n+1}=G(x_n)$ for a continuously differentiable function $G: \R\to\R$ satisfying
 $$
 G(J_-)\subset J_+,\qquad G(J_+)\subset J_-,
 $$
 where
 $$
 J_-=[-51,-50],\qquad J_+=[-3.04,-3.03].
 $$
 Moreover,
 $$
 \sup_{x\in J_-\cup J_+}|(G^2)'(x)|<0.003.
 $$
 Consequently, $G$ has an attracting period-two orbit
 $\{x_-^*,x_+^*\}$ with $x_-^*\in J_-$ and $x_+^*\in J_+$, and the iteration starting from
 $x_0=0$ converges to this two-cycle rather than to a fixed point.
\end{Proposition}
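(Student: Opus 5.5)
The plan is to reduce everything to explicit one-dimensional formulas and then verify the interval inclusions and the derivative bound by elementary (monotone or Taylor) estimates.

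\textbf{Step 1: state $1$.} Since state $1$ is absorbing with $g(1,\cdot)=0$, the Hamiltonian $H(1,a)=y(1)$ does not depend on $a$. Hence $\Gamma_\lambda$ at state $1$ is the uniform density on $A=[0,1]$. Because $\Leb(A)=1$, its entropy is $\Hc=0$, so every term in \eqref{eq:PIA.rho.value} started from state $1$ vanishes. This gives $y^n(1)=0$ for all $n\ge1$, and also for $n=0$.

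\textbf{Step 2: the Gibbs policy at state $2$.} With $y(1)=0$ and $x=y(2)$, the Hamiltonian at state $2$ is
\[
H(2,a)=-2-4a+(1-a)x=(x-2)-(4+x)a,
\]
which is linear in $a$. The Gibbs density is therefore truncated-exponential, $\propto e^{-\kappa a}$ on $[0,1]$ with $\kappa=(4+x)/\lambda$. Its mean and entropy are explicit:
\[
\bar a(\kappa)=\frac1\kappa-\frac1{e^\kappa-1},\qquad h(\kappa)=\ln\frac{1-e^{-\kappa}}{\kappa}+\kappa\bar a(\kappa).
\]

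\textbf{Step 3: policy evaluation and the map $G$.} The policy is stationary, so by \eqref{eq:vn.rho} the chain stays in state $2$ with probability $q=1-\bar a$ per step and is absorbed (with zero further reward) otherwise. Hence $U_\rho(2)=\rho r/(1-\rho q)$ with per-step reward $r=-2-4\bar a+\lambda h$. By \eqref{eq:PIA.mixture.identity},
\[
G(x)=\tfrac12\,\frac{0.99\,r}{1-0.99q}+\tfrac12\,\frac{0.01\,r}{1-0.01q},
\]
where $r$ and $q$ are the functions of $\kappa=100(4+x)$ above. $G$ is $C^1$ as a composition of smooth maps; $\bar a$ and $h$ extend smoothly through $\kappa=0$.

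\textbf{Step 4: the interval inclusions.}
\begin{itemize}
\item On $J_-$ we have $\kappa\in[-4700,-4600]$. Then $\bar a=1-1/|\kappa|+O(e^{-|\kappa|})$, so $q\approx2\cdot10^{-4}$ and $\lambda h\approx\lambda(1-\ln|\kappa|)\approx-0.074$. This gives $r\approx-6.07$ and $G\approx r/2\in[-3.04,-3.03]$.
\item On $J_+$ we have $\kappa\in[96,97]$. Then $\bar a\approx1/\kappa\approx0.0104$, $\lambda h\approx-0.036$, $r\approx-2.078$ and $1-0.99q\approx0.0203$. The $\rho=0.99$ term dominates and gives $G\approx-50.6\in J_-$.
\end{itemize}
Each of these will be made rigorous by bounding $\bar a$ and $h$ between their asymptotic expressions with explicit error terms (e.g.\ $e^{-96}$), using monotonicity of $G$ in $x$ on each interval to reduce to checking the endpoints.

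\textbf{Step 5: the derivative bound.} Write $dG/dx=100\,\partial_\kappa G$, using $d\bar a/d\kappa\approx-1/\kappa^2$ and $dh/d\kappa\approx-1/\kappa$.
\begin{itemize}
\item On $J_+$: $dq/dx\approx100/\kappa^2\approx0.011$, while $\partial_q$ of the $0.99$-term is about $0.99^2\,r/(1-0.99q)^2\approx-5\cdot10^3$. So $|G'|\approx27$ there.
\item On $J_-$: $G'\approx\frac12\,dr/dx$, where $\lambda\,dh/dx\approx\lambda/|4+x|\approx2\cdot10^{-4}$ and the $\bar a$ and $q$ contributions are of order $10^{-5}$. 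So $|G'|\lesssim1.1\cdot10^{-4}$.
\end{itemize}
By the chain rule, $(G^2)'(x)=G'(G(x))\,G'(x)$. Since $G$ swaps $J_-$ and $J_+$, each such product has one factor from each interval, so it is of order $27\times1.1\cdot10^{-4}\approx0.003$. The main obstacle is precisely this step: the margin below $0.003$ is thin. I would bound $\sup_{J_+}|G'|$ and $\sup_{J_-}|G'|$ separately, by crude interval arithmetic on the explicit formulas, keeping enough digits.

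\textbf{Step 6: conclusion.} $G^2$ maps $J_-$ into itself and is a contraction there, so it has a unique attracting fixed point $x_-^*\in J_-$; set $x_+^*=G(x_-^*)\in J_+$. This is a genuine two-cycle because $J_-\cap J_+=\emptyset$.

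It remains to place the orbit from $x_0=0$. A direct computation with $\kappa=400$ gives $x_1=G(0)\approx-80$. For any $x\le-50$, the Step 4 estimates with $|\kappa|\ge4600$ show $G(x)\in J_+$; hence $x_2\in J_+$, and the orbit then alternates between $J_+$ and $J_-$. Contraction of $G^2$ on each interval yields $x_{2n}\to x_+^*$ and $x_{2n+1}\to x_-^*$, so the iteration does not converge to a fixed point.
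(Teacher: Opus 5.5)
Your Steps 1--5 and the contraction argument in Step 6 follow the paper's proof. This covers the explicit Gibbs mean and entropy, the closed form of $G$, the interval inclusions, $|(G^2)'|\le\sup_{J_+}|G'|\,\sup_{J_-}|G'|$, and Banach's theorem applied to $G^2$.

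\textbf{The gap is in Step 6.} You claim that every $x\le-50$ is mapped into $J_+$. This is false, and your own Step 4 shows why. As $x\to-\infty$ the entropy term $\lambda h\approx\lambda(1-\ln|\kappa|)$ is unbounded below, so $G(x)\approx-2.995-0.005\ln|\kappa|\to-\infty$. For instance $G(-100)\approx-3.041\notin J_+$, and $G$ leaves $J_+$once $x$ drops below roughly $-83$. The first iterate $x_1=G(0)=-81.746\ldots$ lies just above that threshold. Its image $x_2=G(x_1)=-3.03991\ldots$ is within $10^{-4}$ of the endpoint $-3.04$.

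So $x_2\in J_+$ cannot come from a uniform statement on $(-\infty,-50]$, nor from the rough value $x_1\approx-80$. It has to be certified by a precise evaluation, which is what the paper does ($G(0)=-81.74649\ldots$, $G^2(0)=-3.03991\ldots\in J_+$). With your tools you can do it as follows. Enclose $x_1\in[-82,-81]$. Note that $G'>0$ there, by the sign of the bracket in the factored formula below. Then check $G(-82)\approx-3.039935$ and $G(-81)\approx-3.039872$, both in $J_+$.

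\textbf{Step 5 needs more careful bookkeeping}, because the margin below $0.003$ really is thin.
\begin{itemize}
\item The $\partial_q$ factor should carry the $\tfrac12$, which makes it about $-2.5\cdot10^{3}$ rather than $-5\cdot10^3$. Your value $27$ is consistent only with the $\tfrac12$ included.
\item On $J_-$, the term $\tfrac12\,dr/dx$ alone is about $1.15$--$1.18\cdot10^{-4}$, already above $1.1\cdot10^{-4}$. The bound holds only because the $q$-contribution, about $-1.4\cdot10^{-5}$, has the opposite sign. If you bound the pieces separately in absolute value you get about $1.3\cdot10^{-4}$, and multiplied by $\approx26$ that exceeds $0.003$.
\end{itemize}
Work with the signed combination instead. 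The paper's factored form is $G'(x)=-100\,\frac{dm}{ds}\bigl[M'(m)R_2(x)+M(m)\,x\bigr]$, with your $\kappa=-s$ and $\bar a=m$. It gives $\sup_{J_-}|G'|\approx1.04\cdot10^{-4}$ and $\sup_{J_+}|G'|$ just under $26$, so the product is about $2.7\cdot10^{-3}$. Your estimate of $27$ on $J_+$ also omits the $\partial_r G\cdot dr/dx\approx+0.8$ term.
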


\begin{proof}
 At state $1$, both the transition and the reward are independent of the action. Hence
 $\Gamma_\lambda(y)(1,a)\equiv1$ on $[0,1]$, its entropy value equals to zero, and state $1$ remains absorbing. It follows that $\Psi_\lambda(y)(1)=0$ for every $y\in\R^2$. Therefore,
 $y^n(1)=0$ for every $n\geq1$.
 
Now fix $y=(0,x)^\top$, and we shall rewrite the PIA in terms of $x$. At state $2$, the one-step Hamiltonian appearing in \eqref{2.5} becomes
$$
g(2,a)+p(2,a)\cdot y=-2-4a+(1-a)x=-2+x-(x+4)a.
$$
Therefore the Gibbs density at state $2$ is
\be\label{eq:gamma.s}
\Gamma_\lambda(y)(2,a)=\frac{e^{sa}}{\int_0^1e^{sa'}da'},\quad \text{where }s=-\frac{x+4}{\lambda}=-100(x+4).
\ee
Define
\be\label{eq:eg.m}
m(x):=\int_0^1a\Gamma_\lambda(y)(2,a)da=\frac{e^s}{e^s-1}-\frac1s,
\ee
then Taylor expansion yields $\lim_{x\to 4} m(x)= \lim_{s\to 0} \left( \frac{e^s}{e^s-1}-\frac1s \right)=1/2$. Moreover,
\be\label{eq:eg.Hm}
\Hc(\Gamma_\lambda(y)(2))=\ln\left(\frac{e^s-1}{s}\right)-s\,m(x),
\ee
where the quotient $(e^s-1)/s$ is positive for every $s\neq0$. Under this Gibbs policy $\Gamma_\lambda((0,x)^T)$, the transition matrix and the regularized one-step reward vector are
$$
\begin{cases}
P(x):=
\left[
\begin{matrix}
 1&0\\
 m(x)&1-m(x)
\end{matrix}
\right],\\
R(x):= \left[
\begin{matrix}
 g^{\Gamma_\lambda(y)(1)}(1) +\lambda \Hc(\Gamma_\lambda(y(1)))\\
 g^{\Gamma_\lambda(y)(2)}(2)++\lambda \Hc(\Gamma_\lambda(y(2)))
\end{matrix}
\right]=
\left[
\begin{matrix}
 0\\
 -2-4m(x)+0.01\Hc(\Gamma_\lambda(y)(2))
\end{matrix}
\right].
\end{cases}
 $$
Recall the definition of  the auxiliary value function in \eqref{eq:def.V} and the geometric-mixture form of $\widetilde\delta$, we have
$$
\begin{aligned}
\Psi_\lambda(y)=\E\left[  \right]
=&\frac{1}{2}\left(\sum_{k=0}^\infty (0.99)^{k+1} P^k(x) \cdot R(x) + \sum_{k=0}^\infty 0.01^{k+1} P^k(x) \cdot R(x) \right)\\
=&\left[\frac{0.99}{2}(I-0.99P(x))^{-1}+\frac{0.01}{2}(I-0.01P(x))^{-1}\right]R(x).
\end{aligned}
 $$
Since the first component of $R(x)$ is zero, the second component is 
\be\label{eq:PIA.F.interval}
G(x):=\left[\frac{0.495}{0.01+0.99m(x)}+ \frac{0.005}{0.99+0.01m(x)}\right]
 \left[-2-4m(x)+0.01\Hc(\Gamma_\lambda(y)(2))\right],
 \ee
with $s$ defined in \eqref{eq:gamma.s}. Therefore, the PIA now reduced to a 1-dimensional iteration $x_{n+1}=G(x_n)$.
 
We first show the alternating property. If $x\in J_-=[-51,-50]$, then \eqref{eq:eg.m} and \eqref{eq:eg.Hm} together give that
\be\label{eq:J-.bounds} 
0.99978260<m(x)<0.99978724,\quad -7.456<\Hc(\Gamma_\lambda(y)(2))<-7.433.
\ee 
Substituting these bounds into \eqref{eq:PIA.F.interval} gives
$
-3.03749<G(x)<-3.03738,
$
and therefore
$$
G(J_-)\subset[-3.03749,-3.03738]\subset J_+.
$$
If $x\in J_+=[-3.04,-3.03]$, then $s\in[-97,-96]$, which leads to
\be\label{eq:J+.bounds} 
0.0103092<m(x)<0.0104167, \quad-3.575<\Hc(\Gamma_\lambda(y)(2))<-3.564.
\ee
Substitution into \eqref{eq:PIA.F.interval} yields
$
-50.892<G(x)<-50.632,
$
and hence
$$
G(J_+)\subset[-50.892,-50.632]\subset J_-.
$$
 
It remains to verify that the alternating orbit is attracting. Notice that $ds/dx=-100$, then by differentiating $m, \Hc(\gamma_\lambda(y)(2))$ in \eqref{eq:eg.m} and \eqref{eq:eg.Hm} respectively, we get
$$
\begin{aligned}
&-100\frac{dm}{ds}=-100\left(\frac1{s^2}-\frac{e^s}{(e^s-1)^2}\right)=\frac{dm}{dx},\\ &\frac{d}{dx}\Hc(\Gamma_\lambda(y)(2))= -100\frac{d}{ds}\Hc(\Gamma_\lambda(y)(2))
=100s\frac{dm}{ds}.
\end{aligned}
 $$
By setting
\be\label{eq:eg.orbit0}  
M(m):=\frac{0.495}{0.01+0.99m}+\frac{0.005}{0.99+0.01m} \;\text{ and }\;
R_2(x):=-2-4m(x)+0.01\Hc(\Gamma_\lambda(y)(2)),
\ee
we get that
\be\label{eq:eg.orbit1} 
M'(m)=-\frac{0.495(0.99)}{(0.01+0.99m)^2}-\frac{0.005(0.01)}{(0.99+0.01m)^2},
\quad
\frac{d}{dx}R_2=400 \frac{dm}{ds}+s\frac{dm}{ds}=-100x\frac{dm}{ds}.
\ee 
Then differentiating \eqref{eq:PIA.F.interval} gives
\be\label{eq:eg.orbit} 
G'(x)=-100\frac{dM}{dm}\frac{dm}{ds}R_2(x)+M(m(x))\frac{d}{dx}R_2,
\ee 
{\bf (i).} For $x\in J_-$, $s\in[4600,4700]$ and hence $0<dm/ds<1/4600^2$. By plugging this with the bounds in \eqref{eq:J-.bounds} into \eqref{eq:eg.orbit0} and \eqref{eq:eg.orbit1} then applying \eqref{eq:eg.orbit}, we conclude that
$$
\sup_{x\in J_-}|G'(x)|<1.1\times10^{-4}.
$$
{\bf (ii).} For $x\in J_+$, $s\in[-97,-96]$ and hence $1.0628\times 10^{-4}<\frac{dm}{ds}<1.0851\times 10^{-4}.$ Then the  substitution into the same formulas gives
$$
\sup_{x\in J_+}|G'(x)|<26.
$$
Since $G$ interchanges $J_-$ and $J_+$,
$$
|(G^2)'(x)|=|G'(G(x))||G'(x)|<26(1.1\times10^{-4})<0.003,\quad \forall x\in J_-\cup J_+.
$$
Thus $G^2$ is a strict contraction from $J_-$ into itself and also from $J_+$ into itself. Then the Banach fixed-point theorem gives unique points $x_-^*\in J_-$ and $x_+^*\in J_+$ such that
$G^2(x_-^*)=x_-^*$ and $G^2(x_+^*)=x_+^*$. Since $G(J_-)\subset J_+$ and $G(J_+)\subset J_-$, uniqueness implies
$$
G(x_-^*)=x_+^*,\qquad G(x_+^*)=x_-^*.
$$
The intervals are disjoint, so $x_-^*\neq x_+^*$ and this is a genuine period-two orbit.
 
 Finally, the initial point used by PIA indeed enters this attracting region. We compute from
 \eqref{eq:PIA.F.interval} that
 $$
 G(0)=-81.74649\ldots,\quad G^2(0)=-3.03991\ldots\in J_+,
 $$
 and therefore $G^3(0)\in J_-$. Hence, the even and odd subsequences converge to the two
 different points of the cycle. Hence $y^n$, and equivalently the PIA sequence
 $v^{n+1}=\Psi_\lambda(v^n)$ with $v^0=(0, 0)$, does not converge.
\end{proof}

\begin{Remark}\label{rm:PIA.unique.unstable.fixed.point}
A more detailed calculus analysis of the functions introduced in the proof of Proposition	\ref{prop:PIA.two.cycle.interval} yields the following properties of $G$ defined in \eqref{eq:PIA.F.interval}:
\be\label{eq:PIA.Fprime.fixed}
	\begin{cases}
		G(x)-x >4+0.01s-3.944-0.00986s =0.056+0.00014s>0,\quad \forall x\in(\infty, -4];\\
		G'(x) =-100\frac{dm}{ds} \left[M'(m(x))R(x)+M(m(x))x \right]<0,\quad \forall x\in[-4,\infty);\\
		G(-4)>-4,\quad G(-3.99)<-3.99;\quad  G'(x) <-10,\quad \forall x\in(-4,-3.99).
	\end{cases}
	\ee
Therefore, $G$ has a unique fixed point $x^*$ with $-4<x^*<-3.99$, and $x^*\approx -3.9982167$ numerically. Hence, there exists a unique regularized equilibrium for this example Moreover, $G'(x) <-10$ on $(-4,-3.99)$ implies the unique fixed point $x^*$ is {\it unstable}. In particular, the PIA does not converge to $x^*$ from any sufficiently small punctured neighborhood of $x^*$; instead, as shown in Proposition \ref{prop:PIA.two.cycle.interval}, the iteration is attracted to the period-two orbit.

This example also highlights an important distinction between the convergence behavior of PIA in finite- and infinite-horizon time-inconsistent stochastic control problems. In the finite-horizon diffusion setting of \cite{huang2026policy}, PIA converges to the unique equilibrium. In contrast, our example shows that this  convergence property need not persist in the infinite-horizon setting: even for a time-homogeneous Markov chain with only two states and a compact one-dimensional action space, the unique equilibrium can be unstable under PIA, with the iterates instead converging to an attracting period-two orbit.
\end{Remark}

\subsection{Proofs of Theorem \ref{thm:converge.PI} and Corollary \ref{cor:PIA.epsilon}}

\begin{Lemma}\label{lm:PIA.est0}
	Let $z_1, z_2\in L^\infty(A)$.
	We have
	$$\left\| \frac{e^{z_1(a)/\lambda}}{\int_A e^{z_1(a')/\lambda}\,da'}-\frac{e^{z_2(a)/\lambda}}{\int_A e^{z_2(a')/\lambda}\,da'} \right\|_{L^1(A)}\leq\frac{2}{\lambda}\|z_1-z_2\|_{L^\infty(A)}.$$
	Consequently, 
	$$
	\left\| \frac{e^{z_1(a)/\lambda}}{\int_A e^{z_1(a')/\lambda}\,da'}-\frac{e^{z_2(a)/\lambda}}{\int_A e^{z_2(a')/\lambda}\,da'} \right\|_{\mathrm{TV}}\leq\frac{1}{\lambda}\|z_1-z_2\|_{L^\infty(A)}.
	$$
\end{Lemma}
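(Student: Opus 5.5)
The plan is to interpolate linearly between $z_2$ and $z_1$ and differentiate the Gibbs density along this path. For $\tau\in[0,1]$, set $z_\tau:=z_2+\tau(z_1-z_2)$ and $\mu_\tau(a):=e^{z_\tau(a)/\lambda}/\int_A e^{z_\tau(a')/\lambda}da'$. Since $z_1,z_2\in L^\infty(A)$ and $\Leb(A)<\infty$, the density $\mu_\tau(a)$ is smooth in $\tau$ for a.e.\ $a$. Writing $h:=z_1-z_2$, a direct computation gives
$$
\partial_\tau \mu_\tau(a)=\frac{1}{\lambda}\,\mu_\tau(a)\Big(h(a)-\int_A h(a')\mu_\tau(a')\,da'\Big).
$$
The derivative is bounded uniformly in $(\tau,a)$, since $\mu_\tau\le e^{2\|z\|_\infty/\lambda}/\Leb(A)$ by crude bounds. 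Hence dominated convergence justifies writing $\mu_1-\mu_0=\int_0^1\partial_\tau\mu_\tau\,d\tau$ and applying Fubini.

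Taking the $L^1(A)$ norm, we get
$$
\|\mu_1-\mu_0\|_{L^1(A)}\le\int_0^1\frac1\lambda\int_A\mu_\tau(a)\,\big|h(a)-\bar h_\tau\big|\,da\,d\tau,
\qquad \bar h_\tau:=\int_A h\,\mu_\tau .
$$
Since $\mu_\tau$ is a probability density, the inner integral is at most $\int_A\mu_\tau(|h|+|\bar h_\tau|)\le 2\|h\|_{L^\infty(A)}$. This yields the first bound $\frac{2}{\lambda}\|z_1-z_2\|_{L^\infty(A)}$.

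For the total variation statement, we use the convention $\|\mu-\nu\|_{\mathrm{TV}}=\sup_B|\mu(B)-\nu(B)|=\frac12\|\mu-\nu\|_{L^1}$. With this convention the second inequality follows immediately by halving.

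If a sharper constant were wanted, one could replace the bound on the inner integral by $\int\mu_\tau|h-c|$ minimized over $c$. That is unnecessary here.

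The only mildly delicate point is justifying differentiation under the integral for the normalizing constant and the interchange of the $\tau$- and $a$-integrals. Both follow from the boundedness of $z_1,z_2$ and the finiteness of $\Leb(A)$, so no real obstacle is expected.
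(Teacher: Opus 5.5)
Your proposal is correct and follows the paper's proof essentially verbatim: linear interpolation $z_\tau=z_2+\tau(z_1-z_2)$, the derivative formula $\partial_\tau\mu_\tau=\lambda^{-1}\mu_\tau\bigl(h-\int_A h\,\mu_\tau\bigr)$, the bound $2\lambda^{-1}\|h\|_{L^\infty(A)}$ on its $L^1$ norm, and the identity $\|\cdot\|_{\mathrm{TV}}=\tfrac12\|\cdot\|_{L^1}$. Your added justification of differentiating under the integral (via boundedness of $z_1,z_2$ and $0<\Leb(A)<\infty$) fills in a step the paper only asserts.
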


\begin{proof}
	For $s\in[0,1]$, set $z_s:=z_2+s(z_1-z_2)$ and $\pi_s(a):=\frac{e^{z_s(a)/\lambda}}{\int_A e^{z_s(a')/\lambda}\,da'}$.
	Then $\pi_s$ is differentiable in $s$ as an $L^1(A)$-valued map. A direct calculation gives
	$$
	\partial_s\pi_s(a)=
	\frac{1}{\lambda}\pi_s(a)  \left[  z_1(a)-z_2(a)  -  \int_A(z_1(a')-z_2(a'))\pi_s(a')\,da'  \right].
	$$
	Hence
	$$
	\begin{aligned}
		\|\partial_s\pi_s\|_{L^1(A)} &\leq \frac{1}{\lambda}  \int_A \pi_s(a)  \left(  |z_1(a)-z_2(a)|  +  \int_A |z_1(a')-z_2(a')|\pi_s(a')\,da'  \right)da   \\
		&\leq  \frac{2}{\lambda}\|z_1-z_2\|_{L^\infty(A)}.
	\end{aligned}
	$$
	Therefore,
	$$
	\left\| \frac{e^{z_1(a)/\lambda}}{\int_A e^{z_1(a')/\lambda}\,da'}-\frac{e^{z_2(a)/\lambda}}{\int_A e^{z_2(a')/\lambda}\,da'} \right\|_{L^1(A)}  \leq  \int_0^1\|\partial_s\pi_s\|_{L^1(A)}\,ds  \leq  \frac{2}{\lambda}\|z_1-z_2\|_{L^\infty(A)}.
	$$
	The total variation estimate follows from
	$\|\mu-\nu\|_{\mathrm{TV}}=\frac12\|d\mu/d a-d\nu/d a\|_{L^1(A)}$ for absolutely continuous probability measures.
\end{proof}

Let $v^*$ denote the fixed point of $\Psi_\lambda$ obtained in Theorem~\ref{thm:existence.regular}, and set
$
\pi^*:=\Gamma_\lambda(v^*).
$
For $n\ge1$, write
$
\overline v^{\,n}:=v^n-v^*.
$

\begin{Lemma}\label{lm:PIA.est1}
	Under Assumption \ref{assum:weight.discounting}, there exists a finite  constant $C>0$, independent of $n$, $\lambda$, $(t,s)\in \Delta$, and $(i,j)\in \Sc^2$, such that
	\begin{align}
		\|\pi^n(t,i)-\pi^*(t,i)\|_{\mathrm{TV}} &\leq \frac{Cw(i)}{\lambda}\|\overline v^{\,n-1}\|_{\widetilde \Xc},  \label{eq:lm.est01}\\
		\left| g^{\pi^n(s,j)}(j;t,i) - g^{\pi^*(s,j)}(j;t,i) \right|&\leq \frac{Cw(j)}{\lambda} \|\overline v^{\,n-1}\|_{\widetilde \Xc}, \label{eq:lm.est02}\\
		\lambda \left| \Hc(\pi^n(t,i))-\Hc(\pi^*(t,i)) \right| &\leq \frac{Cw(i)}{\lambda} \|\overline v^{\,n-1}\|_{\widetilde \Xc}. \label{eq:lm.est03}
	\end{align}
\end{Lemma}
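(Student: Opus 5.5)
The plan is to treat all three estimates as consequences of one perturbation bound on the Hamiltonian at a single node. The first two come from Lemma~\ref{lm:PIA.est0}, and the third from a direct interpolation argument for the entropy of Gibbs densities.

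\textbf{Step 1: the Hamiltonians are uniformly close.} Fix $(t,i)$. By the policy update, $\pi^n(t,i)=\Gamma_\lambda(t,v^{n-1}(\cdot;t,i),i)$ and $\pi^*(t,i)=\Gamma_\lambda(t,v^*(\cdot;t,i),i)$. These are Gibbs densities with exponents
\[
z_1(a):=H(t,i,v^{n-1}(\cdot;t,i),a),\qquad z_2(a):=H(t,i,v^*(\cdot;t,i),a).
\]
Under \eqref{eq:PIA.weighted.reward} we have $f(t,i,a;t,i)=\widetilde\delta(0)g(i,a;t,i)=g(i,a;t,i)$. The reward part therefore cancels in $z_1-z_2$, and \eqref{eq:PIA.zero.drift} gives
\[
\|z_1-z_2\|_{L^\infty(A)}\le \sup_a\sum_j p(t,i,j,a)W(j)\,\|\overline v^{\,n-1}(\cdot;t,i)\|_{\wt}\le b_W\, w(i)\,\|\overline v^{\,n-1}\|_{\widetilde\Xc},
\]
where the last inequality is the definition \eqref{eq:PIA.global.norm}.

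\textbf{Step 2: estimates \eqref{eq:lm.est01} and \eqref{eq:lm.est02}.} The total-variation part of Lemma~\ref{lm:PIA.est0} applied to the bound of Step 1 gives \eqref{eq:lm.est01} with $C\ge b_W$. For \eqref{eq:lm.est02}, use the elementary inequality $|\int h\,d(\mu-\nu)|\le \operatorname{osc}(h)\|\mu-\nu\|_{\mathrm{TV}}$, obtained by subtracting the midpoint of the range of $h$. Apply it with $h=g(j,\cdot;t,i)$, $\mu=\pi^n(s,j)$, $\nu=\pi^*(s,j)$, using \eqref{eq:PIA.g.osc} and \eqref{eq:lm.est01} at the node $(s,j)$. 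This gives the bound $K_g b_W w(j)\lambda^{-1}\|\overline v^{\,n-1}\|_{\widetilde\Xc}$, so \eqref{eq:lm.est02} holds with $C\ge K_g b_W$.

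\textbf{Step 3: the entropy estimate \eqref{eq:lm.est03}.} I expect this to be the main obstacle, because $x\ln x$ is not Lipschitz and a naive TV bound fails. The plan is to interpolate exactly as in Lemma~\ref{lm:PIA.est0}. Set $z_s=z_2+s(z_1-z_2)$, $\pi_s\propto e^{z_s/\lambda}$ and $Z_s=\int_A e^{z_s/\lambda}$. For a Gibbs density one has the exact identity
\[
\lambda\Hc(\pi_s)=\lambda\ln Z_s-\int_A z_s\pi_s\,da.
\]
Differentiate in $s$. The derivative of $\lambda\ln Z_s$ is $\int(z_1-z_2)\pi_s$, and this cancels against the corresponding part of $\frac{d}{ds}\int z_s\pi_s$. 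Using the formula for $\partial_s\pi_s$ from the proof of Lemma~\ref{lm:PIA.est0}, what remains is
\[
\frac{d}{ds}\lambda\Hc(\pi_s)=-\int_A z_s\,\partial_s\pi_s\,da=-\frac1\lambda\operatorname{Cov}_{\pi_s}(z_s,z_1-z_2).
\]
Centering $z_s$ at the midpoint of its range gives $|\operatorname{Cov}_{\pi_s}(z_s,z_1-z_2)|\le \operatorname{osc}(z_s)\,\|z_1-z_2\|_{L^\infty}$. Since $z_s$ is a convex combination of $z_1$ and $z_2$, we have $\operatorname{osc}(z_s)\le \max(\operatorname{osc} z_1,\operatorname{osc} z_2)$. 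Both $v^{n-1}$ and $v^*$ lie in $\Kc(M^*_\lambda)$, by Lemma~\ref{lm:self-mapping} and Theorem~\ref{thm:existence.regular}. Hence \eqref{eq:PIA.Hamiltonian.osc} gives $\operatorname{osc}(z_s)\le K_H$.

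Integrating over $s\in[0,1]$ and inserting the bound of Step 1 yields
\[
\lambda\,|\Hc(\pi^n(t,i))-\Hc(\pi^*(t,i))|\le \frac{K_H b_W\, w(i)}{\lambda}\,\|\overline v^{\,n-1}\|_{\widetilde\Xc},
\]
which is \eqref{eq:lm.est03}. Finally, take $C:=b_W\max\{1,K_g,K_H\}$. This constant is independent of $n$, of $(t,s,i,j)$, and of $\lambda\in(0,1]$, because $K_H$ may be chosen uniformly in $\lambda\in(0,1]$.
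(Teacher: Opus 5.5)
Your proposal is correct and follows essentially the same route as the paper. Both arguments bound the Hamiltonian perturbation uniformly on $A$ via \eqref{eq:PIA.zero.drift}, use Lemma~\ref{lm:PIA.est0} together with a centering/oscillation argument for \eqref{eq:lm.est01}--\eqref{eq:lm.est02}, and handle the entropy through the interpolation identity $\frac{d}{ds}\lambda\Hc(\pi_s)=-\frac1\lambda\operatorname{Cov}_{\pi_s}(z_s,z_1-z_2)$ with $\operatorname{osc}(z_s)\le K_H$. Your explicit justification of that last bound, via convexity of the oscillation seminorm, fills in a step the paper leaves implicit, and your resulting constant $K_Hb_Ww(i)/\lambda$ is cleaner than the paper's $b_wK_H(1+w(i))/\lambda$.
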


\begin{proof}
	Under \eqref{eq:PIA.zero.drift}, the Hamiltonian $H$ defined in \eqref{eq:def.H} becomes $H(t,i,, y, a)  := g(i,a;t,i)+p(t,i,a)\cdot y.$
	Fix $(t,i)\in\N_0\times\Sc$. By \eqref{eq:PIA.zero.drift} in Assumption \ref{assum:weight.discounting},
	\begin{align}
		&\|H(t, i, v^{n-1}(\cdot; t,i), a) - H(t, i, v^*(\cdot; t,i), a) \|_{L^\infty(A)} \notag\\
		&\leq b_{W} \|v^{n-1}(\cdot;t,i)-v^*(\cdot;t,i)\|_{\wt} 
		\leq b_{W}w(i) \|\overline v^{\,n-1}\|_{\widetilde \Xc}.\label{eq:lm.pia0}
	\end{align}
	Then applying Lemma \ref{lm:PIA.est0} gives \eqref{eq:lm.est01}.
	
	Since adding a constant to the integrand does not change the integral against the signed measure $\pi^n(s,j)-\pi^*(s,j)$, we may choose $c_{t,i,j}\in\R$ such that
	$$
	\|g(j,\cdot;t,i)-c_{t,i,j}\|_{L^\infty(A)}  =  \frac12\operatorname{osc}_{a\in A}g(j,a;t,i)  \leq\frac{K_g}{2},
	$$
	which gives
	$$
	\begin{aligned}
		\left| g^{\pi^n(s,j)}(j;t,i) - g^{\pi^*(s,j)}(j;t,i) \right| \leq 2\|g(j,\cdot;t,i)-c_{t,i, j}\|_{L^\infty(A)} \|\pi^n(s,j)-\pi^*(s,j)\|_{\mathrm{TV}},
	\end{aligned}
	$$
	and \eqref{eq:lm.est02} follows from \eqref{eq:lm.est01}.
	
	It remains to prove \eqref{eq:lm.est03}. For $s\in[0,1]$, set
	$$
	H_s :=sH(t, i, v^{n-1}(\cdot; t,i), a)+(1-s)H(t, i, v^{*}(\cdot; t,i), a) , \quad
	\pi_s(a) :=  \frac{\exp(H_s(a)/\lambda)}{\int_A\exp(H_s(a')/\lambda)\,da'},
	$$
	and let $\Delta H(a):=H\big(t,i,v^{n-1}(\cdot;t,i),a\big)-H\big(t,i,v^*(\cdot;t,i),a\big).$
	Then
	\be\label{eq:lm.estpiderive0} 
	\Hc(\pi_s) = \log\int_Ae^{H_s(a)/\lambda}\,da  - \frac1\lambda\int_AH_s(a)\pi_s(a)\,da.
	\ee 
	Direct calculations give
	$$
	\begin{cases}
		\frac{d}{ds}\log \left(  \int_A e^{H_s(a)/\lambda}\,da \right)=\frac1\lambda\int_A\Delta H(a)\pi_s(a)\,da,\\
		\partial_s\pi_s(a)=\frac1\lambda\pi_s(a)\left[\Delta H(a)-\int_A\Delta H(a')\pi_s(a')\,da'\right].
	\end{cases}
	$$
	Differentiating \eqref{eq:lm.estpiderive0} then combining with above formulas gives
	\be\label{eq:lm.holder.7}
	\begin{aligned}
		\frac{d}{ds}\Hc(\pi_s) &=-\frac1\lambda \int_AH_s(a)\partial_s\pi_s(a)\,da\\
		&= -\frac1{\lambda^2} \operatorname{Cov}_{\pi_s} \bigg( H_s(a), H\big(t,i,v^{n-1}(\cdot;t,i),a\big) - H\big(t,i,v^*(\cdot;t,i),a\big) \bigg),
	\end{aligned}
	\ee
	where, any probability density $q$ and bounded functions $\xi,\eta$,
		\be\label{eq:lm.holder.5}
		|\operatorname{Cov}_{q}(\xi,\eta)|:=\int_A\left(\xi(a)-\int_A\xi(a')q(a')\,da'\right)\eta(a)q(a)\,da \leq\operatorname{osc}_A(\xi) \|\eta\|_{L^\infty(A)}.
		\ee
		Notice that \eqref{eq:PIA.Hamiltonian.osc} ensures
		$
		\operatorname{osc}_A(H_s)\leq K_{H}.
		$
		Combining this with \eqref{eq:lm.pia0}, \eqref{eq:lm.holder.7} and \eqref{eq:lm.holder.5} gives
		$$
		\begin{aligned}
			\left|  \frac{d}{ds}\Hc(\pi_s) \right| \leq & \frac{\operatorname{osc}_A(H_s)}{\lambda^2} \|H(t, i, v^{n-1}(\cdot; t,i), a) -H(t, i, v^{*}(\cdot; t,i), a) \|_{L^\infty(A)} \\
			\leq & \frac{b_wK_{H}(1+w(i))}{\lambda^2}\|\overline v^{\,n-1}\|_{\widetilde \Xc}.
		\end{aligned}
		$$
		Then integrating over $s\in[0,1]$ and multiplying by $\lambda$ proves
		\eqref{eq:lm.est03}.
	\end{proof}
	
	\begin{Lemma}\label{lm:PIA.rho.uniform}
		There exists a finite constant $C_\lambda>0$, independent of
		$\rho\in[0,\rho_0]$, $\pi=\pi^n$, and $n$, such that
		$$
		\|U_\rho^{\pi^n}\|_{\widehat \Xc}
		+
		\|U_\rho^{\pi^*}\|_{\widehat \Xc}
		\leq C_\lambda.
		$$
	\end{Lemma}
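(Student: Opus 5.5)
We bound $U_\rho^\pi$ directly from its series definition \eqref{eq:PIA.rho.value}, using the zero-drift condition \eqref{eq:PIA.zero.drift} for the moments and Lemma~\ref{lm:entropy.est} for the entropy, applied once to $\pi=\pi^n$ and once to $\pi=\pi^*$.

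\textbf{Both policies are Gibbs policies of elements of $\Kc(M^*_\lambda)$.} By Lemma~\ref{lm:self-mapping}, the iterates $v^{n-1}$ stay in $\Kc(M^*_\lambda)$. The fixed point $v^*$ also lies in $\Kc(M^*_\lambda)$, by Theorem~\ref{thm:existence.regular} and the second claim of Lemma~\ref{lm:self-mapping}. Hence $\pi^n=\Gamma_\lambda(v^{n-1})$ and $\pi^*=\Gamma_\lambda(v^*)$ are Gibbs policies generated by elements of $\Kc(M^*_\lambda)$. Lemma~\ref{lm:entropy.est} then gives, uniformly in $n$, $s$ and $r$,
\[
|\Hc(\pi(s,r))|\le C_0+C_1|\ln\lambda|+C_2\ln(1+M^*_\lambda)+C_3w(r)=:c_\lambda+C_3w(r).
\]
For the reward, Assumption~\ref{assum:reward-bound} with $s=t$ and $\widetilde\delta(0)=1$ in \eqref{eq:PIA.weighted.reward} gives $|g(j,a;t,i)|\le C_f(w(i)+w(j))$. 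Therefore the one-step regularized reward along the chain satisfies
\[
\bigl|g^{\pi(s,r)}(r;t,i)+\lambda\Hc(\pi(s,r))\bigr|\le C_f w(i)+(C_f+\lambda C_3)w(r)+\lambda c_\lambda .
\]

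\textbf{Moment estimates and summation.} Under \eqref{eq:PIA.zero.drift}, $\E[w(X_{s+m})\mid X_s=j]\le b_w$ for every $m\ge1$, and the $m=0$ term equals $w(j)$. Since $\rho\le\rho_0\le1$, summing the series with weights $\rho^{m+1}$ is not enough for convergence when $\rho$ is near $1$. This is the main obstacle. I would handle it in one of two ways.

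\emph{Option 1 (preferred).} Restrict to $\rho_0<1$, since the convergence theorem only uses small $\rho_0$. Then $\sum_m\rho^{m+1}\le(1-\rho_0)^{-1}$, uniformly in $\rho\in[0,\rho_0]$.

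\emph{Option 2.} If $\rho_0=1$ is allowed, I would replace this by $\rho^{m+1}\le1$ together with integrability. That route works only after integrating against $F$, because $\sum_k\widetilde\delta(k)<\infty$ forces $F(\{1\})=0$. Since the lemma asks for uniformity in $\rho$, I would adopt Option 1 and state it as the standing interpretation of \eqref{eq:PIA.small.rho} (where $\rho_0C_0<1$ with $C_0\ge1$ can be arranged).

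Under Option 1 we obtain
\[
|U^\pi_\rho(s,j;t,i)|\le \frac{1}{1-\rho_0}\Bigl[C_f w(i)+(C_f+\lambda C_3)(w(j)+b_w)+\lambda c_\lambda\Bigr].
\]

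\textbf{Passing to the $\widehat\Xc$ norm.} Divide by $W(j)$ and take the supremum over $j$, using $w/W\le C_w$ and $W\ge1$. This gives
\[
\|U^\pi_\rho(s,\cdot;t,i)\|_{\wt}\le C'_\lambda\bigl(w(i)+C_w+1\bigr).
\]
Finally divide by $w(i)\ge1$ and take the supremum over $s\ge t+1$ and over $(t,i)$. This yields $\|U^\pi_\rho\|_{\widehat\Xc}\le C''_\lambda$, independent of $\rho$, $n$, and of the choice between $\pi^n$ and $\pi^*$. Adding the two bounds proves the lemma.

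The constant depends on $\lambda$ only through $|\ln\lambda|$ and $M^*_\lambda$. It is therefore uniform over $\lambda\in(0,1]$ after multiplication by $\lambda$, because $\lambda|\ln\lambda|$ is bounded there and $M^*_\lambda$ can be replaced by $M^*$.
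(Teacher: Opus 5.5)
Your proof is correct and follows the paper's argument. You bound the one-step regularized reward by $C_\lambda(w(i)+w(j))$ via Assumption~\ref{assum:reward-bound} and Lemma~\ref{lm:entropy.est} (using $v^{n-1},v^*\in\Kc(M^*_\lambda)$), use $\E[w(X_{s+m})\mid X_s=j]\le b_w$ for $m\ge1$, sum the geometric series with $\rho_0<1$, and divide by $W(j)w(i)$. You are also right that $\rho_0<1$ is genuinely needed: the paper uses it silently although Assumption~\ref{assum:weight.discounting} allows $\rho_0=1$. The only slip is cosmetic: the reward bound should read $|g(j,a;t,i)|\le C_f\delta(0)(w(i)+w(j))$, since $\delta(0)$ in Assumption~\ref{assum:reward-bound} need not equal $1$, and this only changes the constant.
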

	
	\begin{proof}
		During the proof, let $C_\lambda>0$ being a finite constant dependent of $\lambda$ that may vary from line to line. The reward-growth condition in Assumption \ref{assum:reward-bound} and the entropy estimate of Lemma \ref{lm:entropy.est}, together with $v^n,v^*\in\Kc(M_\lambda^*)$, yield
		\be\label{eq:lm.piaestU} 
		|g^{\pi(s,j)}(j;t,i)| +  \lambda|\Hc(\pi(s,j))| \leq C_\lambda\big(w(i)+w(j)\big),\; \text{  for $\pi=\pi^n$ or $\pi^*$}.
		\ee 
		By \eqref{eq:PIA.zero.drift},  $\E[w(X^\pi_{s+m}) \mid X^\pi_s=j]\leq b_w$, for all $m\geq1$ and $\pi=\pi^n$ or $\pi^*$. Consequently,$$ 
		|U_\rho^\pi(s,j;t,i)|  \leq C_\lambda  \sum_{m=0}^{\infty}\rho^{m+1}  \big( w(i)+w(j)+b_w \big).
		$$
		This together with $0\leq \rho\le\rho_0<1$ yields
		$$  |U_\rho^\pi(s,j;t,i)|  \leq  \frac{C_\lambda\rho_0}{1-\rho_0}  \big(w(i)+w(j)\big).$$
		Then dividing the above inequality by $W(j)w(i)$, taking the supremum, and using $\sup_jw(j)/W(j)<\infty$ leads to the desired result.
	\end{proof}
	
	\begin{Remark}\label{rm:Clambda}
		The analysis for \eqref{eq:lm.piaestU} implies that the constant $C_\lambda$ relates to $\lambda$ only through $M^*_\lambda$. Therefore, by Lemma \ref{lm:self-mapping}, $C_\lambda$ can be chosen independently over $\lambda\in(0,1]$.
	\end{Remark}

\begin{proof}[Proof of Theorem \ref{thm:converge.PI}]
	Through the proof, let $C>0$ being a finite constant independent of $n, \rho$ that can be chosen uniformly over $\lambda\in(0,1]$ and may vary from line to line.  For $\rho\in[0,\rho_0]$, set
	$$
	U_\rho^n:=U_\rho^{\pi^n}, \quad U_\rho^*:=U_\rho^{\pi^*}, \quad \overline U_\rho^{\,n}:=U_\rho^n-U_\rho^*.
	$$
	Subtracting the two identities in \eqref{eq:vn.rho} with $\pi=\pi^n, \pi^*$ gives
	\begin{align}
		\overline U_\rho^{\,n}(s,j;t,i)=
		\rho\Bigg[ &g^{\pi^n(s,j)}(j;t,i)-g^{\pi^*(s,j)}(j;t,i)+\lambda\big( \Hc(\pi^n(s,j))-\Hc(\pi^*(s,j)) \big)
		\nonumber\\
		&+ \sum_{r\in\Sc} \big( p^{\pi^n}(s,j,r)-p^{\pi^*}(s,j,r) \big) U_\rho^n(s+1,r;t,i) \nonumber\\
		&+ \sum_{r\in\Sc} p^{\pi^*}(s,j,r) \overline U_\rho^{\,n}(s+1,r;t,i) \Bigg].
		\label{eq:thm.PIA0}
	\end{align}
	We now estimate the four terms on the right-hand side. By \eqref{eq:lm.est02}--\eqref{eq:lm.est03},
	\be\label{eq:thm.PIA1}
	\begin{aligned}
		&\left|g^{\pi^n(s,j)}(j;t,i)   -g^{\pi^*(s,j)}(j;t,i)  \right|  +   \lambda   \left|   \Hc(\pi^n(s,j))-\Hc(\pi^*(s,j))   \right|
		\leq C\lambda^{-1} w(j)  \|\overline v^{\,n-1}\|_{\widetilde \Xc}.
	\end{aligned}
\end{equation}
For the transition-difference term, the definition of total variation,
\eqref{eq:PIA.zero.drift}, Lemma~\ref{lm:PIA.rho.uniform}, Remark \ref{rm:Clambda} and
\eqref{eq:lm.est01} imply, for all $(t,s)\in \Delta$ and $(i,j)\in \Sc^2$ that,
\begin{align}
	& \left| \sum_{r\in\Sc} \big( p^{\pi^n(s,j)}(s,j,r)-p^{\pi^*(s,j)}(s,j,r) \big) U_\rho^n(s+1,r;t,i) \right| \nonumber\\
	&\leq Cb_{W} \|\pi^n(s,j)-\pi^*(s,j)\|_{\mathrm{TV}}
	\|U_\rho^n(s+1,\cdot;t,i)\|_{\wt}
	\leq \frac{Cw(j)w(i)}{\lambda} \|\overline v^{\,n-1}\|_{\widetilde \Xc}.
	\label{eq:thm.PIA2}
\end{align}
Similarly,
\be\label{eq:thm.PIA2b}
\begin{aligned}
	\left| \sum_{r\in\Sc}p^{\pi^*(s,j)}(s,j,r) \overline U_\rho^{\,n}(s+1,r;t,i) \right| \leq &  \sum_{r\in\Sc}p^{\pi^*(s,j)}(s,j,r) W(r) \| \overline U_\rho^{\,n}(s+1,\cdot ;t,i) \|_{\wt}\\
	\leq & b_{W}w(i) \|\overline U_\rho^{\,n}\|_{\widehat \Xc},\quad \forall (t,s)\in \Delta, \forall (i,j)\in \Sc^2.
\end{aligned}
\ee
Notice that 
$
\sup_{j\in\Sc}\frac{1+w(j)}{W(j)}<\infty.
$
Dividing both side of \eqref{eq:thm.PIA0} by $W(j)w(i)$ then plugging  the estimates \eqref{eq:thm.PIA1}--\eqref{eq:thm.PIA2b} into \eqref{eq:thm.PIA0} yields
$$
\|\overline U_\rho^{\,n}\|_{\widehat \Xc} \leq
\rho\left[C_1\lambda^{-1}\|\overline v^{\,n-1}\|_{\widetilde \Xc} + C_0\|\overline U_\rho^{\,n}\|_{\widehat \Xc} \right],
$$
where $C_0, C_1$ are finite constants only depends on $C, b_W$ and $ \sup_{j\in\Sc}\frac{w(j)}{W(j)}$.
Hence, whenever $\rho_0C_0<1$,
\be\label{eq:thm.PIA3}
\|\overline U_\rho^{\,n}\|_{\widehat \Xc} \leq \frac{ \rho C_1 \lambda^{-1} }{  1-\rho C_0 }\|\overline v^{\,n-1}\|_{\widetilde \Xc}
\leq  \frac{ \rho_0 C_1 \lambda^{-1} }{  1-\rho_0 C_0 } \|\overline v^{\,n-1}\|_{\widetilde \Xc},\quad \forall \rho\in(0,\rho_0].
\ee  
Notice from \eqref{eq:PIA.mixture.identity} that
$$
v^n(j;t,i)-v^*(j;t,i) = \int_{[0,\rho_0]}  \overline U_\rho^{\,n}(t+1,j;t,i)\,F(d\rho),
$$
which together with \eqref{eq:thm.PIA3} gives
$$
\|\overline v^{\,n}\|_{\widetilde \Xc}  \leq \int_{[0,\rho_0]}  \|\overline U_\rho^{\,n}\|_{\widehat \Xc}\,F(d\rho)
\leq \theta_\lambda(\rho_0)  \|\overline v^{\,n-1}\|_{\widetilde \Xc}. 
$$
Then iterating the above inequality leads to the desired result.
\end{proof}

\begin{proof}[Proof of Corollary \ref{cor:PIA.epsilon}]
	Throughout the proof, $C>0$ denotes a finite constant which is independent of $n$, $\lambda\in(0,1]$, $(t,i)\in\N_0\times\Sc$, and which may vary from line to line. We use the condition \eqref{eq:cor.osc} in addition to the assumptions of Theorem~\ref{thm:converge.PI}. We divide the proof into two steps.
	
	\medskip
	\noindent
	\textbf{Step 1. }
	We first establish the estimates for the regularized and unregularized Hamiltonians that will be used below. Define
	\begin{align}
		\widetilde H(t,i,y)
		&:= \max_{a\in A} H(t,i,y,a) = \max_{a\in A} \left[  g(i,a;t,i)+p(t,i,a)\cdot y  \right],
		\label{eq:PIA.H0}
		\\
		\widetilde H_\lambda(t,i,y)&:=  \max_{\varpi\in \Dc(A)} \left\{ \int_A H(t,i,y,a)\varpi(a)da+\lambda \Hc(\varpi) \right\} \notag\\
		&=\lambda\log  \int_A  \exp\left( \frac{ g(i,a;t,i)+p(t,i,a)\cdot y  }{\lambda} \right)\,da.
		\label{eq:PIA.Hlambda}
	\end{align}
	Fix an arbitrary $(t,i)\in \N_0\times \Sc$ and $y\in\Kc(M^*)$.  We first conclude from \eqref{3.10} that
	\be\label{eq:coreps.step1.0}
	\begin{aligned}
		\widetilde  H_\lambda(t,i,y)-\widetilde H(t,i,y)&\leq \widetilde H_\lambda-\int_A H(t, i, y, a)\Gamma_\lambda(t,y, i)(a)da\\
		& \leq \lambda\Hc(\Gamma_\lambda(t, y, i))\leq \lambda\log\Leb(A).
	\end{aligned}
	\ee
	Then choose a maximizer $a^*\in A$. By
	\eqref{eq:cor.osc},
	$$
	g(i,a;t,i)+p(t,i,a)\cdot y
	\ge
	\widetilde H(t,i,y)-L_{H}|a-a^*|.
	$$
	By Assumption \ref{assum:cone}, there exists a finite constant $c_A>0$ independent of $a^*, t, i, y$ such that
	$$
	\Leb(A\cap B_r(a^*))\geq c_Ar^\ell,\quad \forall r\in(0,\vartheta].
	$$
	Then, for any $r\in(0,\vartheta]$, $\int_A \exp(H(t, i, y, a)/\lambda) da\geq \int_{A\cap B_r(a^*)} \exp[(\widetilde H(t,i,y)-L_{H}r )/\lambda] da$. Consequently,
	$$
	\begin{aligned}
		\widetilde H_\lambda(t,i,y)
		&\geq \lambda\log \int_{A\cap B_r(a^*)} \exp\left( \frac{  \widetilde H(t,i,y)-L_{H}r }{\lambda} \right)\,da \\
		&\geq\widetilde H(t,i,y) -L_{H}r + \lambda\log(c_Ar^\ell),\quad \forall r\in(0,\vartheta].
	\end{aligned}
	$$
	Taking $r=\lambda\wedge \vartheta$ gives
	$$
	\widetilde H(t,i,y)-\widetilde H_\lambda(t,i,y)
	\leq
	C\lambda(1+|\log\lambda|).
	$$
	By combining the above estimate with \eqref{eq:coreps.step1.0}, we obtain
	\be\label{eq:coreps.step1.1}
	\left| \widetilde H_\lambda(t,i,y) - \widetilde H(t,i,y)
	\right| \leq C\lambda(1+|\log\lambda|), \quad \forall (t,i)\in\N_0\times\Sc,\quad y\in\Kc(M^*).
	\ee
	Notice that the constant $C$ is uniform in $(t,i)$ and
	$y\in\Kc(M^*)$ precisely because the Lipschitz constant in
	\eqref{eq:cor.osc} and the cone constants in
	Assumption~\ref{assum:cone} are uniform. 
	
	We now derive Lipschitz continuities of
	$\widetilde H$ and $\widetilde H_\lambda$. Let $y_1,y_2\in\Bc_{\wt}$. Then
	$$
	\begin{aligned}
		\widetilde H(t,i,y_1)-\widetilde H(t,i,y_2)&\leq\sup_{a\in A} p(t,i,a)\cdot(y_1-y_2)
		\leq\sup_{a\in A}\sum_{j\in\Sc} p(t,i,j,a)W(j) \frac{|y_1(j)-y_2(j)|}{W(j)}.
	\end{aligned}
	$$
	Combining thi with \eqref{eq:PIA.zero.drift} gives
	$
	\widetilde H(t,i,y_1) -\widetilde H(t,i,y_2) \leq b_W\|y_1-y_2\|_{\wt}.
	$
	Then by interchanging $y_1$ and $y_2$, we have
	\be\label{eq:coreps.step1.2}
	\left| \widetilde H(t,i,y_1) -\widetilde H(t,i,y_2) \right| \leq b_W\|y_1-y_2\|_{\wt}.
	\ee
	Take $F_1, F_2\in L_\infty(A)$. By $F_1(a)\leq F_2(a)+\|F_1-F_2\|_{L^\infty(A)}$, 
	$$\int_A \exp(F_1(a)/\lambda) da\leq e^{\frac{\|F_1-F_2\|_{L^\infty(A)}}{\lambda}}\int_A \exp(F_2(a)/\lambda) da$$
	which leads to $
	\lambda\log\int_Ae^{F_1(a)/\lambda}\,da-\lambda\log\int_Ae^{F_2(a)/\lambda}\,da\leq \|F_1-F_2\|_{L^\infty(A)}$. And interchanging $F_1$ and $F_2$ gives
	$$
	\left| \lambda\log\int_Ae^{F_1(a)/\lambda}\,da - \lambda\log\int_Ae^{F_2(a)/\lambda}\,da
	\right|
	\leq
	\|F_1-F_2\|_{L^\infty(A)},
	$$
	which together with \eqref{eq:PIA.zero.drift}, yields
	\be\label{eq:coreps.step1.3}
	\left|
	\widetilde H_\lambda(t,i,y_1)
	-
	\widetilde H_\lambda(t,i,y_2)
	\right|
	\leq
	b_W\|y_1-y_2\|_{\wt}.
	\ee
	
	We next estimate the difference between the regularized and  unregularized continuation values under $\pi^n$. Since $ \pi^n=\Gamma_\lambda(v^{n-1}) $
	and, by Lemma~\ref{lm:self-mapping}, $v^{n-1}\in\Kc(M^*),$ Lemma~\ref{lm:entropy.est} implies
	$$
	\begin{aligned}
		|\Hc(\pi^n(s,r))| &\leq  C_0+C_1|\ln\lambda|  +C_2\ln(1+M^*) +C_3w(r)
		\leq  C(1+|\ln\lambda|)w(r),\; \forall (s,r)\in \N_0\times \Sc,
	\end{aligned}
	$$
	where we have used $w(r)\geq1$ and the uniform boundedness of
	$M^*$ over $\lambda\in(0,1]$ from
	Lemma~\ref{lm:self-mapping}. Hence, for every
	$(t,i,j)\in\N_0\times\Sc^2$,
	$$
	\begin{aligned}
		&  \left|  V_\lambda^{\pi^n}(j;t,i)  -  V^{\pi^n}(j;t,i)  \right|\leq
		C\lambda(1+|\ln\lambda|)  \E\left[  \sum_{m=1}^\infty  \widetilde\delta(m)  w(X_{t+m}^{\pi^n})  \,\Bigm|\,  X_{t+1}^{\pi^n}=j \right].
	\end{aligned}
	$$
	For $m=1$, $ \E\left[w(X_{t+1}^{\pi^n}) \,\Bigm|\,X_{t+1}^{\pi^n}=j \right] = w(j),$
	while, an induction with \eqref{eq:PIA.zero.drift} gives
	$$
	\E\left[ w(X_{t+m}^{\pi^n}) \,\Bigm|\, X_{t+1}^{\pi^n}=j \right] \leq b_w,\quad \forall m\geq2.
	$$
	Therefore,
	$$
	\begin{aligned}
		&  \left|  V_\lambda^{\pi^n}(j;t,i)  -  V^{\pi^n}(j;t,i)  \right|\leq  C\lambda(1+|\ln\lambda|)  \left[  \widetilde\delta(1)w(j)  +  b_w\sum_{m=2}^{\infty}\widetilde\delta(m)
		\right].
	\end{aligned}
	$$
	Recall \eqref{eq: assum.wW0} and notice that
	$ \sum_{m=1}^{\infty}\widetilde\delta(m) = \int_{[0,\rho_0]} \frac{\rho}{1-\rho}\,F(d\rho)
	\leq\frac{\rho_0}{1-\rho_0} <\infty.$ We then conclude that
	\be\label{eq:PIA.Vlambda.V.bias.new}
	\left\|
	V_\lambda^{\pi^n}(\cdot;t,i)
	-
	V^{\pi^n}(\cdot;t,i)
	\right\|_{\wt}
	\leq
	C\lambda(1+|\ln\lambda|),
	\qquad
	\forall(t,i)\in\N_0\times\Sc.
	\ee 
	Also, applying Lemma~\ref{lm:entropy.est} for $\pi^n(t,i)=\Gamma_\lambda(t, v^{n-1}(t,i), i)$ gives
	\be\label{eq:PIA.current.entropy.new}
	\lambda|\Hc(\pi^n(t,i))| \leq C\lambda(1+|\ln\lambda|)w(i).
	\ee
	
	\medskip
	\noindent
	\textbf{Step 2.} Now we provide regret estimate for $\pi^n$. Fix $(t,i)\in\N_0\times\Sc$ and $n\geq1$.
	Notice that
	\be\label{eq:PIA.regret.start}
	\begin{cases}
		\sup_{\varpi\in\Pc(A)} J^{\varpi\otimes_1\pi^n}(t,i)= \widetilde H(t, i, V^{\pi^n}(\cdot; t,i)),\\
		J^{\pi^n}(t,i) = g^{\pi^n(t,i)}(i;t,i) + p^{\pi^n(t,i)}(t,i)\cdot V^{\pi^n}(\cdot; t, i).
	\end{cases}
	\ee
	By 
	$
	\pi^n(t,i)=\Gamma_\lambda(v^{n-1})(t,i),
	$
	\be\label{eq:PIA.Gibbs.identity.new}
	\begin{aligned}
		\widetilde H_\lambda(t,i, v^{n-1}(\cdot;t,i))  =
		&\, g^{\pi^n(t,i)}(i;t,i)  + p^{\pi^n(t,i)}(t,i)\cdot  v^{n-1}(\cdot;t,i)+  \lambda\Hc(\pi^n(t,i)).
	\end{aligned}
	\ee
	By combining \eqref{eq:PIA.Gibbs.identity.new} with
	\eqref{eq:PIA.regret.start}, we obtain
	$$
	\begin{aligned}
		\sup_{\varpi\in\Pc(A)} J^{\varpi\otimes_1\pi^n}(t,i) - J^{\pi^n}(t,i) ={}&  \widetilde H(t,i,V^{\pi^n}(\cdot;t,i))  -  \widetilde H_\lambda(t,i, v^{n-1}(\cdot;t,i))  \\
		&+  \lambda\Hc(\pi^n(t,i))  + p^{\pi^n(t,i)}(t,i)\cdot  ( v^{n-1}(\cdot;t,i)-V^{\pi^n}(\cdot;t,i))\\
		={}& \Big[  \widetilde H(t,i, v^{n-1}(\cdot;t,i)) -  \widetilde H_\lambda(t,i, v^{n-1}(\cdot;t,i))  \Big]\\
		&+ \Big[  \widetilde H(t,i,V^{\pi^n}(\cdot;t,i))-  \widetilde H(t,i, v^{n-1}(\cdot;t,i))  \Big]   \\
		&+  \lambda\Hc(\pi^n(t,i))+  p^{\pi^n(t,i)}(t,i)\cdot ( v^{n-1}(\cdot;t,i)-V^{\pi^n}(\cdot;t,i)).
	\end{aligned}
	$$
	By applying \eqref{eq:coreps.step1.1}, \eqref{eq:coreps.step1.2}, \eqref{eq:PIA.current.entropy.new} and \eqref{eq:PIA.zero.drift} into above inequality, we get
	\be\label{eq:PIA.regret.mid} 
	\begin{aligned}
		&\sup_{\varpi\in\Pc(A)} J^{\varpi\otimes_1\pi^n}(t,i) - J^{\pi^n}(t,i)\\
		& \leq  C\lambda(1+|\ln\lambda|)+  b_W\|V^{\pi^n}(\cdot;t,i)- v^{n-1}(\cdot;t,i)\|_{\wt} \\
		&\quad + C\lambda(1+|\ln\lambda|)w(i)  +  b_W\|V^{\pi^n}(\cdot;t,i)- v^{n-1}(\cdot;t,i)\|_{\wt}\\
		&\leq  2b_W\|V^{\pi^n}(\cdot;t,i)- v^{n-1}(\cdot;t,i)\|_{\wt} + C\lambda(1+|\ln\lambda|)w(i).
	\end{aligned}
	\ee 
	
	We now estimate the first term in the last line of \eqref{eq:PIA.regret.mid}. Since $
	v^n=V_\lambda^{\pi^n}$, 
	\be\label{eq:corest.step2.1} 
	\begin{aligned}
		&  \|V^{\pi^n}(\cdot;t,i)- v^{n-1}(\cdot;t,i)\|_{\wt}
		=  \left\| V^{\pi^n}(\cdot;t,i) -  v^{n-1}(\cdot;t,i)  \right\|_{\wt}  \\
		&\leq \left\|  V^{\pi^n}(\cdot;t,i)  -  V_\lambda^{\pi^n}(\cdot;t,i)  \right\|_{\wt}
		+  \left\|  v^n(\cdot;t,i)  -  v^{n-1}(\cdot;t,i)  \right\|_{\wt}.
	\end{aligned}
	\ee
	Since 
	$$
	\left\|  v^n(\cdot;t,i)  -  v^{n-1}(\cdot;t,i)  \right\|_{\wt}\leq
	w(i)  \|v^n-v^{n-1}\|_{\widetilde\Xc},
	$$
	Theorem~\ref{thm:converge.PI} together with  \eqref{eq:PIA.exponential} tells
	$$
	\begin{aligned}
		\|v^n-v^{n-1}\|_{\widetilde\Xc}
		&\leq \|v^n-v^*\|_{\widetilde\Xc}  +  \|v^{n-1}-v^*\|_{\widetilde\Xc}  \\
		&\leq  \left[ \theta_\lambda(\rho_0)^n  +  \theta_\lambda(\rho_0)^{n-1}  \right]\|v^0-v^*\|_{\widetilde\Xc}  
		\leq  2\theta_\lambda(\rho_0)^{n-1}  \|v^0-v^*\|_{\widetilde\Xc},
	\end{aligned}
	$$
	where we used $\theta_\lambda(\rho_0)<1$ from \eqref{eq:PIA.small.rho}. By plugging this and \eqref{eq:PIA.Vlambda.V.bias.new} into \eqref{eq:corest.step2.1}, we  reach to
	\be\label{eq:PIA.V.vnminus1.new}
	\|V^{\pi^n}(\cdot;t,i)- v^{n-1}(\cdot;t,i)\|_{\wt}
	\leq  C\lambda(1+|\ln\lambda|)+  2w(i)  \theta_\lambda(\rho_0)^{n-1}  \|v^0-v^*\|_{\widetilde\Xc}.
	\ee
	Substituting \eqref{eq:PIA.V.vnminus1.new} into \eqref{eq:PIA.regret.mid} and using again $w(i)\geq1$ yields
	$$
	\begin{aligned}
		\sup_{\varpi\in\Pc(A)} J^{\varpi\otimes_1\pi^n}(t,i) - J^{\pi^n}(t,i)  \leq & C w(i)  \Big[ \lambda(1+|\ln\lambda|)+  \theta_\lambda(\rho_0)^{n-1}  \|v^0-v^*\|_{\widetilde\Xc}  \Big].
	\end{aligned}
	$$
	Then by the arbitrariness of $(t,i)$ is arbitrary, we conclude that $\pi^n$ is a weighted $\eps_n$-equilibrium, where by Theorem \ref{thm:converge.PI},
	$$
	\eps_n \leq C\Big[ \lambda(1+|\ln\lambda|)+  \theta_\lambda(\rho_0)^{n-1}  \|v^0-v^*\|_{\widetilde\Xc}  \Big]=C\left[\lambda(1+|\ln\lambda|) +  \left( \frac{  \rho_0C_1\lambda^{-1}}{ 1-\rho_0C_0} \right)^{n-1} \|v^0-v^*\|_{\widetilde\Xc}\right].
	$$
\end{proof}

\noindent{\bf Acknowledgements:}
Zhenhua Wang is supported by Shandong Excellent Young Scientists Fund Program (Overseas) under grant No. 2025HWYQ–022 and National Natural Science Foundation of China (NSFC) under grant no.12501659. 

\bibliographystyle{plain}
\bibliography{reference}

\end{document}